\theoremstyle{plain} 
\newtheorem{theorem}{Theorem}
\newtheorem{lemma}[theorem]{Lemma}
\newtheorem{corollary}[theorem]{Corollary}
\newtheorem{proposition}[theorem]{Proposition}
\newtheorem*{theorem0}{Theorem}
\theoremstyle{definition}
\newtheorem*{acknowledgements}{Acknowledgements}
\newcommand{\tr}{\operatorname{tr}}
\newcommand{\bl}{\operatorname{bl}}
\title{A percolation on directed graphs}
\author{Mamoru Tanaka\footnote{This work was supported by World Premier International Research Center Initiative (WPI), MEXT, Japan}}
\date{} 
\begin{document}

\maketitle

\begin{abstract}
Suppose each site independently and randomly chooses some sites around it, and it is weakly (strongly) connected with them (if there choose each other). 
What is the probability that the weak (strong) connected cluster is infinite?  
We investigate a percolation model for this problem, which is a generalization of site percolation. 
We give a relation between the probability of the number of chosen sites around a site and the size of clusters. 
We also see the expected number of infinite clusters, and the exponential tail decay of the radius and the size of a cluster. 


\footnote{Mathematics Subject Classification (2010): Primary 82B43; secondary 05C20.}
\footnote{Keywords: percolation, directed graph.}

\end{abstract}


\section{Introduction} 
Suppose a case are filled by molecules and each molecule has some arms for grabbing other molecules. 
When the molecules make a very large molecule cluster? 
We investigate a percolation model for such a problem in mind. 

Percolation is a model for representing randomly connected components consisting of sites on graphs (cf.\ \cite{MR1707339}). 
For example, in bond percolation in \cite{MR0091567}, we choose open edges independently and randomly, and connect sites at the end of them.
In site percolation, we choose open sites independently and randomly, and connect each open site and open sites around it.  
For directed graphs, there also exist some percolation models: 
For oriented graphs, whose edges are determined their direction, in oriented percolation in \cite{MR1707339}, we choose open edges independently and randomly, and investigate the existence of infinite directed open paths. 
For directed graphs whose edges are not determined their direction, in random-oriented percolation in \cite{MR1840826}, we choose the directions of every edges independently and randomly, and investigate the existence of infinite directed paths. 
For the square lattice, random-oriented percolation in \cite{MR1824275}, we choose "rightwards", "leftwards", "rightwards and leftwards" or "absent" at each horizontal edge, and similarly "upwards" and "downwards" at each vertical edge independently and randomly, and investigate the existence of infinite directed paths. 

We consider the following percolation model on directed graphs: 
Let $G=(V,E,o)$ be an infinite, simple directed graph with the origin $o$, that is, 
the vertex set $V$ is a countably infinite set, the edge set $E $ is a subset of $V\times V \backslash \{(x,x) \mid x \in V\}$, and $o$ is a fixed vertex. 
Vertices are also called {\it sites}.
We assume $(x,y)\in E$ if and only if $(y,x)\in E$. 
We also assume $G$ is {\it $d$-regular}, that is, $|\{ y \in V \mid (x,y)\in E \}|=d$ for every $x \in V$.  
We write $x \sim y$ for $ x,y \in V$ when $(x,y)\in E$. 
We suppose that $G$ is {\it connected}, that is, for any $x,y \in V$ there are $z_1, z_2, \dots, z_{k-1} \in V$ such that $x \sim z_1 \sim z_2 \sim \dots \sim z_{k-1} \sim y$. 
The sequence $(x,z_1,z_2,\dots,z_{k-1},y)$ is called a (non-directed) {\it path} from $x$ to $y$ in $G$ with the length $k$. 
The graph distance $\delta(x,y)$ between vertices $x$ and $y$ in $G$ is the length of the shortest paths connecting them. 
Denote $B(n):=\{x \in V \mid \delta(0,x) \le n\} $ and $\partial B(n):=\{x \in V \mid \delta(o,x) = n\}$. 
A {\it graph isomorphism} $f:V \to V'$ between directed graphs $G=(V,E)$ and $G'=(V',E')$ is a bijective map such that $(x,y) \in E$ if and only if $(f(x), f(y)) \in E'$.  
In particular, if $G=G'$, then it is called a {\it graph automorphism}. 
We assume $G$ is {\it vertex-transitive}, that is, for any pair of vertices $x,y \in V$ there exists a graph automorphism $f$ satisfying $f(x)=y$. 
Since $G$ is infinite and connected, we have $d\ge 2$ and if $d=2$ then $G$ is uniquely determined up to isomorphism.   

A {\it subgraph} $G'=(V',E')$ of $G$ is a directed graph with $V'\subset V$ and $E'\subset E$ such that if $(x,y) \in E'$ then $x,y \in V'$, which may not satisfy $(x,y) \in E'$ if and only if $(y,x)\in E'$.   
We denote $x \leftrightharpoons y$ in $G'$ for $x,y \in V'$ with $x \sim y$ if $(x,y) \in E'$ or $(y,x)\in E'$. 
For  $x,y \in V'$, if $x = y$ or if there are $z_1, z_2, \dots, z_k\in V$ such that $x \leftrightharpoons z_1 \leftrightharpoons  z_2 \leftrightharpoons  \dots \leftrightharpoons  z_k \leftrightharpoons y$, then we say $x$ and $y$ are {\it weakly connected} and write as $x \leftrightharpoons y$ in $G'$. 
Similarly, we denote $x \leftrightarrow y$ in $G'$ for $x,y \in V'$ with $x \sim y$ if $(x,y)\in E'$ and $(y,x)\in E'$. 
For  $x,y\in V'$, if $x = y$ or if there are $z_1, z_2, \dots, z_k \in V$ such that $x \leftrightarrow z_1 \leftrightarrow z_2 \leftrightarrow \dots \leftrightarrow z_k \leftrightarrow y$, then we say $x$ and $y$ are {\it strongly connected} and write as $x \leftrightarrow y$ in $G'$.  
We denote the cardinality of $V'$ by $|V'|$ or $|G'|$. 

Let $L_x:=\{y \in V \mid (x,y) \in E \}$ for every vertex $x\in V$. 
We give an arbitrary order of the elements in $L_x$, fix it, and denote it as $\{x(1),x(2), \dots,x(d) \}$. 
The set $2^{L_x}$ is the power set of $L_x$. 	
As sample space we take $\Omega = \prod_{x \in V}2^{L_x}$, points of which are represented as $\omega = \{\omega (x) \subset L_x : x \in V\}$ and called {\it configurations}. 
Each $\omega(x) $ is called the {\it state} at $x$. 
We take $\mathcal{F}$ to be the $\sigma $-field of subsets of $\Omega$ generated by the finite dimensional cylinders. 
An element $A \in \mathcal{F}$ is called an {\it event}. 
Let $p_0,p_1,\dots ,p_d$ be non-negative numbers satisfying $ p_0+p_1+\cdots +p_d= 1$, and set ${\bf p}=(p_0,p_1,\dots ,p_d)$. 
We take a product measure on $(\Omega , \mathcal{F})$ defined as  
$P_{\bf p} = \prod_{x \in V} \mu_x $
where $\mu_x$ is given by 
\begin{eqnarray*} 
\mu_x (\omega(x) = \emptyset) = p_0 , \ \ 
\mu_x (\omega(x) = \{x(i_1),\dots, x(i_k) \}) = \frac{p_k}{\binom{d}{k}} = \frac{k!(d-k)!}{d!}p_k
\end{eqnarray*}
for each $\{x(i_1),\dots, x(i_k) \} \subset L_x$. 
This means each vertex $x$ chooses $k$ vertices around it with probability $p_k/\binom{d}{k}$ for $k=0,1,\dots,d$. 
Each configuration $\omega $ can be regarded as the subgraph $G_\omega$ of $G$ with the vertex set $V$ and the edge set 
\begin{eqnarray*}
E_{\omega} := \{(x,y) \in V \times V \mid x \in V, y \in \omega(x) \}.
\end{eqnarray*}
The {\it weak cluster} of $G_\omega$ containing $x \in V$ is the subgraph $C(x)$ such that the vertex set is 
$\{y \in V \mid x \leftrightharpoons y \text{ in } G_\omega \}$, 
and the edge set is 
$\{(y,z) \in E_\omega \mid x \leftrightharpoons y, x \leftrightharpoons z \text{ in } G_\omega \}$. 
The {\it strong cluster} of $G_\omega$ containing $x \in V$ is the subgraph $\tilde{C}(x)$ such that the vertex set is 
$\{y \in V \mid x \leftrightarrow y \text{ in } G_\omega \}$, 
and the edge set is 
$\{(y,z) \in E_\omega \mid x \leftrightarrow y, x \leftrightarrow z \text{ in } G_\omega \}$. 
When $x=o$, we abbreviate $C=C(o)$ and $\tilde{C}=\tilde{C}(o)$. 
If $p_0=1-p$ and $p_d=p$ for $0\le p\le 1$, then this model with the strong connection is equivalent to the site percolation.  
Hence this model is a generalization of the site percolation. 

We define $\omega_1 \le \omega_2$ if $\omega_1(x) \subset \omega_2(x) $ for all $x \in V$.
The event $A \in \mathcal{F}$ is called {\it increasing} if $I_A(\omega_1) \le I_A(\omega_2)$ whenever $\omega_1 \le \omega_2 $, where $I_A$ is the indicator function of $A$. 
A random variable $N$ on $(\Omega, \mathcal{F})$ is also called  {\it increasing} if $N(\omega_1) \le N(\omega_2)$ whenever $\omega_1 \le \omega_2 $. 
Let $(X(x):x \in V)$ be independent random variables uniformly distributed on $[0,1)$.  
Order the set $\{(i_1,\dots,i_d) \in \{1,\dots ,d \}^d \mid  i_j \not= i_k \text{ if } j \not= k \}$ with respect to the lexicographic order, and denote by $a_j$ the $j$-th element of this set for $j=1,2,\dots,d!$. 
We define 
\begin{eqnarray*}
\eta_{\bf p}(x):=
\begin{cases}
\emptyset  & \text{ if } X(x) \in \left[ \frac{j-1}{d!}, \frac{j-1+p_0}{d!}\right) \text{ for some } j\\
\{x(i_1), \dots, x(i_k) \} & \text{ if } X(x) \in \left[ \frac{j-1+p_0+\dots +p_{k-1}}{d!}, \frac{j-1+p_0+\dots +p_k}{d!}\right) \\ 
&\ \ \  \text{ for } j \text { with } a_j=(i_1,\dots,i_d). 
\end{cases}
\end{eqnarray*}
Then we have $P(\eta_{\bf p}(x) = S)=P_{\bf p}(\omega(x) = S)$ for any $S \subset L_x$. 
For ${\bf p} = (p_0,p_1,\dots,p_d)$ and ${\bf p}'=(p_0',p_1',\dots,p_d')$, we define ${\bf p} \le {\bf p}'$ if $p_i+\dots+p_d \le p_i'+\dots+p_d'$ for all $i=0,1,\dots,d$. 
If ${\bf p} \le {\bf p'}$, then $\eta_{\bf p}(x) \subset \eta_{\bf p'}(x)$ for all $x \in V$. 
This gives that $N(\eta_{\bf p}) \le N(\eta_{\bf p'})$ for any increasing random variable $N$. 
Hence 
\begin{eqnarray*}
\mathbb{E}_{\bf p}(N) = \mathbb{E}(N(\eta_{\bf p})) \le \mathbb{E}(N(\eta_{\bf p'})) = \mathbb{E}_{\bf p'}(N), 
\end{eqnarray*}
for any increasing random variable $N$, and 
\begin{eqnarray*}
P_{\bf p}(A) = \mathbb{E}_{\bf p}(I_A)  \le  \mathbb{E}_{\bf p'}(I_A) =  P_{\bf p'}(A) 
\end{eqnarray*}
for any increasing event $A$. 

We would like to know the behavior of $P(|C|=k)$ and $P(|\tilde{C}|=k)$ for $k=1,2,3,\dots $, and 
\begin{eqnarray*}
\theta (G,{\bf p}):= P(|C|=\infty)  \ \ \ \text{ and } \ \ \  \tilde{\theta} (G,{\bf p}):= P(|\tilde{C}|=\infty), 
\end{eqnarray*} 
and the expectations 
\begin{eqnarray*}
\chi(G,{\bf p}):=\mathbb{E}_{\bf p}(|C|) \ \ \ \text{ and } \ \ \  \tilde{\chi}(G,{\bf p}):=\mathbb{E}_{\bf p}(|\tilde{C}|) .  
\end{eqnarray*} 
We can easily see that $\theta (G,{\bf p})=0$ if $p_0=1$, $\tilde{\theta} (G,{\bf p})=0$ if $p_0+p_1=1$, and 
$\theta (G,{\bf p})=\tilde{\theta} (G,{\bf p})=1$ if $p_d=1$.  
Since the events $\{|C|=\infty\}$ and $\{|\tilde{C}|=\infty\}$ are increasing, 
the probabilities $\theta (G,{\bf p})$ and $\tilde{\theta} (G,{\bf p})$ are nondecreasing in ${\bf p}$. 

If $G$ is a square lattice and $p_0=\frac{1}{3}, p_1=\frac{1}{3}, p_2=\frac{1}{3}$, then we can see a configuration $\omega$ as the below image: 
\begin{center}
\includegraphics[width=10cm]{DVP-p.126}  
\end{center}

In Section \ref{sec:2-reg}, we calculate $\chi(T_2,{\bf p}) $ and $\tilde{\chi}(T_2,{\bf p})$ for the connected infinite directed 2-regular graph $T_2$,  
which is the most simple infinite directed regular graph. 

In Section \ref{sec:tree}, using the theory of branching processes, we see that the $d$-regular directed tree $T_d$ ($d \ge 3$), that is, the connected $d$-regular directed graph without cycles, satisfies 
$\theta(T_d,{\bf p})=0$ if $p_0+p_1=1$ and $\theta(T_d,{\bf p})>0$ if $p_2+\cdots + p_d=1$.  
Moreover, we investigate the critical probability when ${\bf p}$ satisfies $p_1+p_2=1$. 
We also see that 
$\tilde{\theta}(T_d,{\bf p}) = 0$ if $p_0+\cdots+p_k=1$, and $\tilde{\theta}(T_d,{\bf p}) > 0$ if $p_{k+1}+\cdots+p_d=1$ for $k(k-1) \le d <  (k+1)k$. 
Moreover, we investigate the critical probability when ${\bf p}$ satisfies $p_k+p_{k+1}=1$ for $k$ with $k(k-1) < d <  (k+1)k$. 

In section \ref{sec:FiniteCluster}, we give sufficient conditions of $\theta(G,{\bf p}) = 0$ and $\tilde{\theta}(G,{\bf p}) = 0$. 
\begin{theorem0}
If $d\ge 3$, then $\theta(G,{\bf p})=0$ if $p_0+p_1$ is sufficiently near to $1$, and  
 $\tilde{\theta}(G,{\bf p})=0$ if $p_0 + \cdots +p_k$ is sufficiently near to $1$ for $k$ with $k(k-1) < d$. 
\end{theorem0}
As we will see in section \ref{sec:InfiniteCluster}, for the triangular lattice ($d=6$), $\tilde{\theta}(G,{\bf p})>0$ if $p_3$ is sufficiently near to $1$.
Hence the conclusion of the theorem does not hold for $d=k(k-1)$. 

In section \ref{sec:InfiniteCluster}, we give sufficient conditions of $\theta(G,{\bf p}) > 0$ and $\tilde{\theta}(G,{\bf p}) > 0$ 
for some vertex-transitive directed $d$-regular planar graph.
A {\it self-avoiding path} with length $n$ in $G$ is a path $(x_0, x_1, \dots ,x_n)$ such that $x_i\not =x_j$ for all $i\not=j$. 
Let $\sigma_G (n)$ be the number of the self-avoiding paths in $G$ having length $n$ and beginning at the origin. 
Since $G$ is a connected infinite $d$-regular graph, $1 \le \sigma_G(n) \le (d-1)^n $. 
The {\it connective constant} of $G$ is given by 
$$\lambda(G) := \lim_{n \to \infty } \sigma_G(n)^{\frac{1}{n}}.  $$
This limit exists and satisfies $1 \le \lambda(G) \le d-1$ (\cite{MR1197356}). 
Planar graphs $G$ have their dual graphs $G^*$ (see section \ref{sec:InfiniteCluster} for the definition of a directed planar graph and its dual graph). 
Let $N(k)$ be the length of the shortest paths in $G^*$ such that $B(k)$ in $G$ is enclosed.  
\begin{theorem0} 
Assume $G$ is isomorphic to a directed planar graph with $N(k) \to \infty$ as $k \to \infty$. 
\begin{enumerate}
\item If $d =k $ or $\lambda(G^*)  < \left(\frac{d}{d-k}\right)^2$, 
and $p_k+\cdots + p_d$ is sufficiently near to $1$, then $\theta(G,{\bf p})>0$. 
\item If $d =k $ or 
$\lambda(G^*)  < \frac{d}{d-k} $, 
and $p_k+\cdots + p_d$ is sufficiently near to $1$, then $\tilde{\theta}(G,{\bf p})>0$. 
\end{enumerate}  
\end{theorem0}
Note that since $\lambda(G^*) \le d^*-1$, we can give the positivity of  $\theta(G,{\bf p})$ and  $\tilde{\theta}(G,{\bf p})$ only checking a relation between $d$, $k$ and the maximum degree $d^*$ of the dual graph. 
For example, since the dual graph of the square lattice is the square lattice, and the dual graph of the hexagonal lattice is the triangular lattice, the dual graph of the triangular lattice is the hexagonal lattice, we can see that these graphs satisfy the assumption of the above theorem with $k=2$ for (i).  
We can also check that  the square lattice and the hexagonal lattice satisfy the assumptions of the above theorem with $k=3$ for (ii). 
On the other hand, for the triangular lattice, using the fact that the connective constant of the hexagonal lattice is $\sqrt{2+\sqrt{2}}$ (\cite{MR2912714}), we can check the assumptions of the above theorem with $k=3$ for (ii). 
In addition to these, we can use this theorem for other directed planer graphs. 

In section \ref{sec:Ineq}, we see the FKG inequality is not valid for our model. 
We remember Reimer's inequality, which is valid for any events. 
We also see Russo's formula for our model. 
Our model has $d+1$-dimensional parameter space of ${\bf p}$. 
Hence Russo's formula represents the directional derivative of $P_{\bf p}(A)$ on this space 
for an event $A$ which depends only on the states of the finite vertices. 

In section \ref{sec:Number}, we note that the number of weak clusters and strong clusters of per vertex are $\mathbb{E}_{\bf p} (|C|^{-1})$ and $\mathbb{E}_{\bf p} (|\tilde{C}|^{-1})$ for amenable graphs as the bond percolation in \cite{MR1707339}. 
We see that the number of weak infinite clusters and strong infinite causers are $0$, $1$ or $\infty$.  
In particular, if $G$ is amenable, then the number of weak infinite clusters is $0$ or $1$ when $p_0+p_1>0$, $p_2>0$, and 
the number of strong infinite clusters is $0$ or $1$ when $p_0+p_1>0$, $p_2>0$, $p_3+\cdots + p_d>0$.

In section \ref{sec:expdecay}, we will see that if $\chi(G,{\bf p}) < \infty$, then $P_{\bf p}(o \leftrightharpoons \partial B(n))$ and $P_{\bf p}(|C|\ge n ) $ are exponentially decay in $n$. 
Moreover, for the hypercube lattice $\mathbb{L}^D$ there exists $\lim_{n \to \infty} \{-\log P_{\bf p} (|C|=n)/n \} $ if $p_2 > 0$ (or $p_i>0$ for all $i \ge j$ with $p_j>0$, $j\ge 1$) and $D \ge 2$. 
Similarly if $\tilde{\chi}(G,{\bf p}) < \infty$, then $P_{\bf p}(o \leftrightarrow \partial B(n))$ and $P_{\bf p}(|\tilde{C}|\ge n )$ are exponentially decay in $n$,  
and for $\mathbb{L}^D$ there exists $\lim_{n \to \infty} \{-\log P_{\bf p} (|\tilde{C}|=n)/n \} $ if $D \ge 2$ and $p_i>0$ for all $i \ge j$ with $p_j>0$, $j\ge 1$.

\begin{acknowledgements}
The motivation for this research is come from a joint work with Akihiko Hirata of Tohoku University. 
We would like to thank Demeter Kiss for many valuable comments. 
We would also like to thank Masato Takei for comments on related results.  
\end{acknowledgements}

\section{2-regular infinite graph}\label{sec:2-reg}

The connected infinite directed 2-regular graph $T_2$ is the directed graph isomorphic to the directed graph with $V=\mathbb{Z}$ and $(x,y) \in E$ if and only if $|x-y|=1$. 
This is the most simple infinite directed regular graph. 

\begin{theorem}\label{thm:2-regular}
If $p_2 \not =1$, then we obtain 
\begin{eqnarray*}
\chi(T_2,{\bf p}) = \frac{8(1+p_0) }{(2p_0+p_1)^2}- 3  \ \ \ \text{ and }\ \ \ 
\tilde{\chi}(T_2,{\bf p}) = \frac{(2p_2+p_1)^2 }{2(1-p_2)} + 1.  
\end{eqnarray*}
\end{theorem}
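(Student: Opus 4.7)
The plan is to exploit the one-dimensional geometry of $T_2=\mathbb{Z}$: both the weak and the strong cluster of the origin are intervals $[L,R]$ with $L\le 0\le R$, and the reflection $k\mapsto -k$ is a graph automorphism of $T_2$ that preserves the law $P_{\bf p}$. Consequently $R$ and $-L$ have the same marginal distribution, and
\begin{equation*}
\chi(T_2,{\bf p}) = 1+2\sum_{n\ge 1} P_{\bf p}(0\leftrightharpoons n),\qquad \tilde\chi(T_2,{\bf p}) = 1+2\sum_{n\ge 1} P_{\bf p}(0\leftrightarrow n).
\end{equation*}
For bookkeeping I introduce, for each $k\in\mathbb{Z}$, the indicators $a_k:=\mathbf{1}[k+1\in\omega(k)]$ and $b_k:=\mathbf{1}[k-1\in\omega(k)]$; the pair $(a_k,b_k)$ takes the values $(0,0),(1,0),(0,1),(1,1)$ with probabilities $p_0,p_1/2,p_1/2,p_2$, and the pairs at different vertices are independent. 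The edge $\{k,k+1\}$ is weakly open iff $a_k\vee b_{k+1}=1$ and strongly open iff $a_k=b_{k+1}=1$. I set $q:=p_1/2+p_2$.

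The strong case is immediate. The event $\{0\leftrightarrow n\}$ factorises into single-vertex events $a_0=1$, $(a_k,b_k)=(1,1)$ for $1\le k\le n-1$, and $b_n=1$, which are independent across vertices. This gives $P_{\bf p}(0\leftrightarrow n)=q^2 p_2^{n-1}$, and summing the geometric series (valid for $p_2\ne 1$) yields the stated expression for $\tilde\chi$.

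The weak case is the main obstacle, because the events ``edge $\{k,k+1\}$ weakly open'' are not independent: consecutive edges share a common vertex, and the indicators $a_{k+1}$ and $b_{k+1}$ at that vertex are correlated. I will handle this by a $2\times 2$ transfer matrix whose state at vertex $k$ is $s=a_k\in\{0,1\}$. Setting
\begin{equation*}
f_k(s):= P_{\bf p}\bigl(\text{edges }\{j,j+1\}\text{ weakly open for }0\le j\le k-1,\ a_k=s\bigr),
\end{equation*}
a short case analysis on $(a_{k+1},b_{k+1})$ yields $f_{k+1}=Mf_k$ with
\begin{equation*}
M=\begin{pmatrix} q & p_2 \\ 1-q & p_1/2 \end{pmatrix},\qquad f_0=\begin{pmatrix} q \\ 1-q \end{pmatrix},
\end{equation*}
so that $P_{\bf p}(0\leftrightharpoons n)=(1,1)\,M^n f_0$.

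It then remains to compute $\sum_{n\ge 0}(1,1)M^n f_0 = (1,1)(I-M)^{-1}f_0$. A short calculation gives $\det(I-M)=(1-q)(1-p_1/2-p_2)=(1-q)^2=(2p_0+p_1)^2/4$, which is positive precisely when $p_2\ne 1$; this both guarantees convergence of the series and makes the inverse explicit. Inverting the matrix, applying $f_0$, and simplifying by means of $1-q=(2p_0+p_1)/2$ and $p_0+p_1+p_2=1$ then matches the stated closed form $8(1+p_0)/(2p_0+p_1)^2-3$. The only nonroutine choice is the state space of the transfer matrix; once this is fixed, the remaining work is algebra.
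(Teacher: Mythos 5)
Your argument is correct and reaches the stated formulas, but it organizes the weak-connection computation differently from the paper. The paper conditions on the states at $n$ and $n-1$ to extract a scalar second-order linear recurrence
\begin{equation*}
P_{\bf p}(0 \leftrightharpoons n) - (p_1+p_2)\,P_{\bf p}(0 \leftrightharpoons n-1) + \Bigl(\tfrac{p_1^2}{4}-p_0p_2\Bigr)P_{\bf p}(0 \leftrightharpoons n-2)=0,
\end{equation*}
which it must verify separately for small $n$, then solves via the characteristic roots $\alpha,\beta$ and sums two geometric series. Your transfer matrix $M$ is the natural linearization of the same Markovian structure: its characteristic polynomial is exactly the paper's characteristic equation, so the two computations are algebraically equivalent. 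What your version buys is cleaner bookkeeping — the one-step recursion $f_{k+1}=Mf_k$ is immediate from independence of the vertex states (the key point being that the future of the path depends on vertex $k$ only through $a_k$), there is no separate base-case verification, and summing via $(1,1)(I-M)^{-1}f_0$ avoids introducing $\alpha$ and $\beta$ explicitly. The strong-connection computation is identical in both treatments. I checked that $(1,1)(I-M)^{-1}f_0=(1+p_0p_2-p_1^2/4)/(1-q)^2$, which after $2(\cdot)-1$ and the identity $p_0+p_1+p_2=1$ agrees with $8(1+p_0)/(2p_0+p_1)^2-3$.

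One small point to tighten: positivity of $\det(I-M)$ does not by itself guarantee convergence of $\sum_n M^n$ (consider $M=2I$). You need the spectral radius of $M$ to be strictly less than $1$. This is easy to supply: the eigenvalues of $M$ are real (the discriminant equals $p_2(4p_0+2p_1+p_2)\ge 0$), the column sums of $M$ are $1$ and $q\le 1$ so $\rho(M)\le 1$, and then $\det(I-M)=(1-q)^2>0$ together with $\operatorname{tr}(I-M)=2-p_1-p_2>0$ forces both eigenvalues into $(-1,1)$ when $p_2\ne 1$. Alternatively, just compute the eigenvalues explicitly as the paper does and bound them directly. With that sentence added, the proof is complete.
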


\begin{proof}
Since $2^{L_n}=\{\emptyset, \{n-1\}, \{n+1\} , \{n-1,n+1\} \}$ for each $n\in \mathbb{N}$,  
we have 
\begin{eqnarray*}
&& P_{\bf p}(0 \leftrightharpoons n) \\ 
&=& P_{\bf p}(0 \leftrightharpoons n, \omega(n)=\emptyset ) 
+ P_{\bf p}(0 \leftrightharpoons n, \omega(n)=\{n+1\} ) \\ 
&& + P_{\bf p}(0 \leftrightharpoons n, \omega(n)=\{n-1\} ) 
+ P_{\bf p}(0 \leftrightharpoons n, \omega(n)=\{n-1,n+1\} ) \\
&=& P_{\bf p}(0 \leftrightharpoons n-1, \omega(n-1)=\{n\})p_0 + P_{\bf p}(0 \leftrightharpoons n-1, \omega(n-1)=\{n-2, n\})p_0 \\  
&& + P_{\bf p}(0 \leftrightharpoons n-1, \omega(n-1)=\{n\})\frac{p_1}{2} + P_{\bf p}(0 \leftrightharpoons n-1, \omega(n-1)=\{n-2, n\})\frac{p_1}{2} \\ 
&& + P_{\bf p}(0 \leftrightharpoons n-1)\frac{p_1}{2}  
+ P_{\bf p}(0 \leftrightharpoons n-1)p_2 \\ 
&=& P_{\bf p}(0 \leftrightharpoons n-1, \omega(n-1)=\{n\}) \left( p_0 + \frac{p_1}{2}\right) 
+ P_{\bf p}(0 \leftrightharpoons n-2)p_2\left( p_0 + \frac{p_1}{2}\right) \\  
&& + P_{\bf p}(0 \leftrightharpoons n-1)\left( \frac{p_1}{2} + p_2\right) . 
\end{eqnarray*}
for $n \ge 2$. 
Since  
\begin{eqnarray*}
&& P_{\bf p}(0 \leftrightharpoons n-1, \omega(n-1)=\{n\}) \\ 
&=&  P_{\bf p}(0 \leftrightharpoons n-2, \omega(n-2)=\{n-1\}, \omega(n-1)=\{n\}) \\ 
&& + P_{\bf p}(0 \leftrightharpoons n-2, \omega(n-2)=\{n-3, n-1\}, \omega(n-1)=\{n\}) \\ 
&=&  P_{\bf p}(0 \leftrightharpoons n-2, \omega(n-2)=\{n-1\})\frac{p_1}{2} 
\ +\  P_{\bf p}(0 \leftrightharpoons n-3)p_2\frac{p_1}{2} ,
\end{eqnarray*}
for $n \ge 3$, we get 
\begin{eqnarray*}
&& P_{\bf p}(0 \leftrightharpoons n)  - P_{\bf p}(0 \leftrightharpoons n-1)\left( \frac{p_1}{2} + p_2\right) 
- P_{\bf p}(0 \leftrightharpoons n-2)p_2\left( p_0 + \frac{p_1}{2}\right)\\ 
&=& \left( P_{\bf p}(0 \leftrightharpoons n-1)  - P_{\bf p}(0 \leftrightharpoons n-2)\left( \frac{p_1}{2} + p_2\right) 
- P_{\bf p}(0 \leftrightharpoons n-3)p_2\left( p_0 + \frac{p_1}{2}\right)\right)\frac{p_1}{2}\\ 
&& + P_{\bf p}(0 \leftrightharpoons n-3)p_2\frac{p_1}{2} \left( p_0 + \frac{p_1}{2}\right) . 
\end{eqnarray*}
These give the recurrence formula 
\begin{eqnarray*}
P_{\bf p}(0 \leftrightharpoons n) - (p_1+p_2) P_{\bf p}(0 \leftrightharpoons n-1)  
+ \left( \left( \frac{p_1}{2}\right) ^2 -p_0p_2\right) P_{\bf p}(0 \leftrightharpoons n-2) = 0. 
\end{eqnarray*}
Since $P_{\bf p}(0 \leftrightharpoons 0)=1$, 
\begin{eqnarray*}
P_{\bf p}(0 \leftrightharpoons 1) 
&=& \left( \frac{p_1}{2}+p_2 \right) + \left(p_0+ \frac{p_1}{2}\right) \left( \frac{p_1}{2}+p_2 \right), \\ 
P_{\bf p}(0 \leftrightharpoons 2) 
&=& \left( \frac{p_1}{2}+p_2 \right) P_{\bf p}(1 \leftrightharpoons 2)  
 + \left(p_0+ \frac{p_1}{2}\right) \left( \frac{p_1}{2}\left( \frac{p_1}{2}+p_2 \right) +p_2 \right) \\ 
&=& \left( p_1 +p_2 \right) P_{\bf p}(0 \leftrightharpoons 1)  
 - \left( \left( \frac{p_1}{2}\right) ^2 -p_0p_2\right) P_{\bf p}(0 \leftrightharpoons 0),
\end{eqnarray*}
the above recurrence formula is also valid for $n=2$. 
Using the characteristic roots  
\begin{eqnarray*}
\alpha = \frac{p_1+p_2}{2} + \sqrt{\left( p_0 + \frac{p_1}{2} + \frac{p_2}{4} \right) p_2}, \ 
\beta = \frac{p_1+p_2}{2} - \sqrt{\left( p_0 + \frac{p_1}{2} + \frac{p_2}{4} \right) p_2}
\end{eqnarray*}
of the characteristic equation of the recurrence relation, we have 
\begin{eqnarray*}
 P(0 \leftrightharpoons n) = \frac{\alpha^n(P_{\bf p}(0 \leftrightharpoons 1)-\beta) - \beta^n(P_{\bf p}(0 \leftrightharpoons 1)-\alpha) }{\alpha-\beta}\end{eqnarray*}
for $n\ge 2$ and this is also valid for $n=0,1$. 
Since the absolute values of $\alpha, \beta$ are less than $1$ if $p_2 \not= 1$, 
we have 
\begin{eqnarray*}
\chi(T_2,{\bf p}) 
&=& \mathbb{E}_{\bf p}(|C|) 
= \sum_{n=-\infty}^{\infty}P_{\bf p}(0 \leftrightharpoons n)
= 1+ 2\sum_{n=1}^{\infty}P_{\bf p}(0 \leftrightharpoons n) \\ 
&=&  1+ 2 \left( \frac{P_{\bf p}(0 \leftrightharpoons 1)-\beta}{\alpha-\beta}\sum_{n=1}^{\infty}\alpha ^n - 
\frac{(P_{\bf p}(0 \leftrightharpoons 1)-\alpha) }{\alpha-\beta} \sum_{n=1}^{\infty} \beta^n \right) \\ 
&=&  1+ 2 \left( \frac{P_{\bf p}(0 \leftrightharpoons 1)-\beta}{\alpha-\beta}\frac{\alpha}{1-\alpha} - 
\frac{(P_{\bf p}(0 \leftrightharpoons 1)-\alpha) }{\alpha-\beta} \frac{\beta}{1-\beta} \right) \\ 
&=&  1+ \frac{ 2\left( P_{\bf p}(0 \leftrightharpoons 1)-\alpha\beta \right) }{(1-\alpha)(1-\beta)}\\ 
&=&  1+ \frac{ 2\left( \left( \frac{p_1}{2}+p_2 \right) + \left(p_0+ \frac{p_1}{2}\right) \left( \frac{p_1}{2}+p_2 \right) 
- \left( \left( \frac{p_1}{2}\right) ^2 -p_0p_2\right) \right) }{\left( 1-(p_1+p_2) + \left( \left( \frac{p_1}{2}\right) ^2 -p_0p_2\right)\right)}\\ 
&=&  \frac{ 8(1+ p_0)}{\left( 2p_0 + p_1\right)^2} - 3. 
\end{eqnarray*}

On the other hand, since $P_{\bf p}(0 \leftrightarrow n)=\left( p_2+\frac {p_1}{2} \right)^2p_2^{n-1}$ for $n\ge 1$, we have 
\begin{eqnarray*}
\tilde{\chi}(T_2,{\bf p}) 
&=& \mathbb{E}_{\bf p}(|\tilde{C}|) 
= \sum_{n=-\infty}^{\infty}P_{\bf p}(0 \leftrightarrow n)
= 1+ 2\sum_{n=1}^{\infty}P_{\bf p}(0 \leftrightarrow n) \\ 
&=&  1+ 2\left( p_2+\frac {p_1}{2} \right)^2\frac{1}{p_2}\sum_{n=1}^{\infty} p_2^{n} \\ 
&=&  1+ 2\left( p_2+\frac {p_1}{2} \right)^2\frac{1}{p_2}\frac{p_2}{1-p_2} \\ 
&=& \frac{(2p_2+p_1)^2 }{2(1-p_2)} + 1. 
\end{eqnarray*}
\end{proof}

\section{$d$-Regular Tree ($d\ge 3$)} \label{sec:tree}

Let $T_d$ be the $d$-regular directed tree with the origin $o$, that is, a connected $d$-regular directed graph without cycles.
We assume $d\ge 3$. 
The following proof relies on the comments by Demeter Kiss. 

\begin{theorem}\label{thm:weaktree}
If $p_1=1-p$ and $p_2= p$, then 
\begin{eqnarray*}
\theta(T_d,{\bf p})>0 \ \ \ \ \text{ if and only if } \ \ \ \ p> p_{c,2}(T_d) 
\end{eqnarray*}
where 
\begin{eqnarray*}
p_{c,2}(T_d) :=  \frac{1}{(d^2-d-1)+\sqrt{(d^2-d-1)^2 - (d-1)^2}} \approx \frac{1}{2d^2} 
\end{eqnarray*}
is a decreasing function in $d$, satisfies $0<p_{c,2}(T_d)<1$. 
 
In particular, if $p_0+p_1 = 1$ then $\theta(T_d,{\bf p})=0$, and if $p_2+\dots +p_d = 1$ then $\theta(T_d,{\bf p})>0$.  
\end{theorem}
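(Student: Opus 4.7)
The plan is to analyze the weak cluster $C$ of the origin in $T_d$ by exploring it as a two-type Galton--Watson branching process. Since $T_d$ is a tree, the subtrees hanging off the distinct neighbors of any vertex $x$ are conditionally independent once $\omega(x)$ is exposed. The only subtlety is that, for a non-root $x$ reached from a parent $v$, the state $\omega(x)$ plays a double role: it decides whether the weak edge $\{v,x\}$ is present (via $v\in\omega(x)$) and which non-parent neighbors of $x$ enter $C$ through $x$'s side (via $z\in\omega(x)$). To separate these, I would assign each $x\neq o$ in $C$ a type: type A if $x\in\omega(v)$ (so $\omega(x)$ is still unconditioned) and type B if $x\notin\omega(v)$ but $v\in\omega(x)$ (so $\omega(x)$ is conditioned on containing $v$). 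A vertex $x$ then produces a type-A child for every non-parent neighbor $z\in\omega(x)$ and a type-B child for every non-parent neighbor $z$ with $z\notin\omega(x)$ and $x\in\omega(z)$; conditional on the type of $x$, the children generate independent subtrees.

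A routine bookkeeping using $\mu:=\mathbb{E}|\omega(x)|=1+p$ and $\mathbb{E}|\omega(x)|^2=1+3p$ yields the mean-offspring matrix
\[
M = \frac{1}{d}\begin{pmatrix}(d-1)(1+p) & (d-1)(d-1-p)(1+p)/d\\ 2pd/(1+p) & (d-1)(1+p)-2p\end{pmatrix}.
\]
The standard multi-type branching criterion then gives $\theta(T_d,{\bf p})>0$ if and only if the Perron eigenvalue $\rho(M)>1$. Writing $\rho(M)=1$ as the scalar equation $1-\tr(M)+\det(M)=0$ and expanding collapses everything to
\[
(d-1)^2 p^2 - 2(d^2-d-1)\,p + 1 = 0,
\]
whose discriminant factors as $d(d-2)(d^2-2)$, strictly positive for $d\ge 3$. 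Rationalizing the smaller root recovers the claimed formula for $p_{c,2}(T_d)$; monotonicity in $d$, the asymptotic $\sim 1/(2d^2)$, and $p_{c,2}(T_d)\in(0,1)$ are then immediate from the closed form.

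The two ``in particular'' statements follow from the monotonicity of $\theta$ in ${\bf p}$ established in the introduction. If $p_0+p_1=1$ then ${\bf p}\le(0,1,0,\dots,0)$, which is the $p=0$ point of the family above and has $\theta=0$ since $0<p_{c,2}(T_d)$. If $p_2+\cdots+p_d=1$ then ${\bf p}\ge(0,0,1,0,\dots,0)$, the $p=1$ point, and $1>p_{c,2}(T_d)$ gives $\theta>0$. The main obstacle I expect is getting the two-type bookkeeping right --- especially the fact that in type B the conditional law of $\omega(x)$ given $v\in\omega(x)$ is not the unconditional law, which changes both $m_{BA}$ and $m_{BB}$ --- and then pushing through the algebra cleanly enough to recognize the factorizations that produce the clean quadratic and its nonnegative discriminant.
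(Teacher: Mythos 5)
Your argument is correct and lands on exactly the same quadratic $(d-1)^2p^2-2(d^2-d-1)p+1=0$ as the paper, but the bookkeeping is genuinely different. The paper also runs a multi-type Galton--Watson process and invokes Harris's mean-matrix criterion, but it uses \emph{four} types read off from the vertex's own (unconditioned) state --- whether $x$ chose its parent, crossed with how many children $x$ chose --- producing a $4\times 4$ matrix with two zero eigenvalues; your two types are indexed instead by \emph{how $x$ got attached to its parent} (chosen by the parent vs.\ choosing the parent), at the cost of the type-B state law being conditioned on containing the parent. I checked your entries: $m_{AA}=\tfrac{(d-1)(1+p)}{d}$, $m_{AB}=\tfrac{(d-1)(d-1-p)(1+p)}{d^2}$, $m_{BA}=\tfrac{2p}{1+p}$, $m_{BB}=\tfrac{(d-1)(1+p)-2p}{d}$ are right, and your $2\times2$ matrix has the same trace $\tfrac{2(d-1)+2(d-2)p}{d}$ and determinant $\tfrac{(d-1)[(d-1)(1+p)^2-2pd]}{d^2}$ as the product and sum of the paper's two nonzero eigenvalues, so the two decompositions are spectrally identical where it matters. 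Your version is more economical; the paper's avoids any conditioning by letting the type encode the full state. Two small points to tighten: (a) $1-\tr(M)+\det(M)=0$ says that $1$ is \emph{an} eigenvalue, not that $\rho(M)=1$, so to conclude $\rho(M)>1\iff p>p_{c,2}$ you should add that for $p<p_{c,2}$ both eigenvalues lie below $1$ (e.g.\ because $\tr(M)<2$ there, since $p_{c,2}<1/(d-2)$, or by continuity from $p=0$ where $\rho=\tfrac{d-1}{d}$); the paper sidesteps this by computing $\rho$ in closed form. (b) Harris's theorem as used requires $M$ strictly positive, which holds for $0<p<1$ and $d\ge 3$ but fails at the endpoints, so the $p=0,1$ cases and the two ``in particular'' statements must go through the monotonicity of $\theta$ in ${\bf p}$ --- which you do correctly, exactly as the paper does.
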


\begin{proof} 
First we assume $0<p<1$. 
Let $\ell(x)$ be the distance of $x$ from $o$. 
We say $x \in V$ is in the $k$-th level if $k=\ell(x)$. 
The vertices $x$ in $T_d$ without the origin have four types: we say 
\begin{enumerate} 
\item $x$ is of type 1 if there is $(x,y) \in E_\omega$ such that $\ell(y) = \ell(x) - 1$, and there is no $(x,z) \in E_\omega$ such that $\ell(z) = \ell(x)+1$; 
\item $x$ is of type 2 if there is $(x,y) \in E_\omega$ such that $\ell(y) = \ell(x) - 1$, and there is $(x,z) \in E_\omega$ such that $\ell(z) = \ell(x)+1$; 
\item $x$ is of type 3 if there is no $(x,y) \in E_\omega$ such that $\ell(y) = \ell(x)-1$, and there is only one $(x,z) \in E_\omega$ such that $\ell(z)=\ell(x)+1$; 
\item $x$ is of type 4 if there is no $(x,y) \in E_\omega$ such that $\ell(y)=\ell(x)-1$, and there is two $(x,z) \in E_\omega$ such that $\ell(z)=\ell(x)+1$.  
\end{enumerate}
Then for $x \in V\backslash \{o\}$, the probability that $v$ is of type 1 is $(1-p)\frac{1}{d}$, of type 2 is $p\frac{2}{d}$, 
of type 3 is $(1-p)\frac{d-1}{d}$, of type 4 is $=p\frac{d-2}{d}$.  

Let $m_{j i}$ be the expected number of type $j$ vertices weakly connected to a fixed type $i$ vertex in one low level. 
Then we can calculate $m_{j i}$ as:  
\begin{eqnarray*}
m_{1i}  
= \sum_{i=1}^{d-1} i \binom{d-1}{i}\left( (1-p)\frac{1}{d} \right)^i \left( 1- (1-p)\frac{1}{d}\right)^{(d-1)-i}  
= (1-p)\frac{d-1}{d}, 
\end{eqnarray*}
\begin{eqnarray*}
m_{2i}  
= \sum_{i=1}^{d-1} i \binom{d-1}{i}\left( p\frac{2}{d} \right)^i \left( 1- p\frac{2}{d}\right)^{(d-1)-i}  
= 2p\frac{d-1}{d}, 
\end{eqnarray*}
for $i=1,2,3,4$, 
$m_{31} = m_{41} = 0$, $m_{32} = m_{33}  = (1-p)\frac{d-1}{d}$, $m_{34} = 2(1-p)\frac{d-1}{d}$, $m_{42} = m_{43}  = p\frac{d-2}{d}$, $m_{44} = 2p\frac{d-2}{d}$. 
Let 
\begin{eqnarray*}
M 
= (m_{ji}) 
= 
\left(
    \begin{array}{c c c c}
      (1-p)\frac{d-1}{d} & (1-p)\frac{d-1}{d}   & (1-p)\frac{d-1}{d}   & (1-p)\frac{d-1}{d}    \\
      2p\frac{d-1}{d}    & 2p\frac{d-1}{d}      & 2p\frac{d-1}{d}      & 2p\frac{d-1}{d}       \\
      0                  & (1-p)\frac{d-1}{d} & (1-p)\frac{d-1}{d} & 2(1-p)\frac{d-1}{d} \\
      0                  & p\frac{d-2}{d}     & p\frac{d-2}{d}     & 2p\frac{d-2}{d}     \\
   \end{array}
 \right). 
\end{eqnarray*}
Since every elements in $M^2$ are positive for $0<p<1$, $M$ is strictly positive. 
Because some vertex weakly connected to more than two vertices with other types in one high level, $M$ is non-singular. 
Let $q^{(i)} $ be the probability of eventual extinction of the process initiated with a single vertex of type $i$ ($i=1,2,3,4$).  
Then we can use the following 
\begin{theorem}[\cite{MR0163361}]\label{thm:Harris}
Assume $M$ is strictly positive and non-singular. 
Let $\rho $ be the maximum eigenvalue of $M$. 
\begin{enumerate}
\item If $\rho \le 1$, $q^{(i)}=1$ for all $i=1,2,3,4$. 
\item If $\rho > 1$, $q^{(i)}<1$ for some $i=1,2,3,4$. 
\end{enumerate}
\end{theorem}
Since the origin is weakly connected to some vertex $x$ in 1-st level, and $P(v $ is type $i)>0$ for all $i=1,2,3,4$ if $0<p<1$, 
we see that $\rho > 1$ if and only if $\theta(T_d,{\bf p}) > 0$. 
The eigenvalues of $M$ are $0,0$ and 
\begin{eqnarray*}
 \frac{(d-2) p+(d-1) \pm \sqrt{(3-2d) p^2+2(d-1)^2 p}}{d}. 
\end{eqnarray*}
Since $p\le 1$ and $d\ge 3$, the maximum eigenvalue $\rho $ of $M$ is larger than $1$ if and only if 
\begin{eqnarray*} 
p 
>  \frac{1}{(d^2-d-1) + \sqrt{(d^2-d-1)^2 - (d-1)^2}}. 
\end{eqnarray*}
Since $\{|C|=\infty\}$ is an increasing event, we get the conclusion. 
\end{proof}

\begin{theorem}\label{thm:strongtree}
Let $2 \le k \le d-1$. 
If $p_k=1-p$, $p_{k+1}=p$, then 
\begin{eqnarray*}
\tilde{\theta}(T_d,{\bf p}) > 0  \ \ \ \ \text{ if and only if } \ \ \ \  p > \frac{d-k(k-1)}{2k}. 
\end{eqnarray*}
In particular, 
if $p_0+\dots +p_k=1$ then $\tilde{\theta}(T_d,{\bf p}) = 0$, and if $p_{k+1}+\dots +p_d=1$ then $\tilde{\theta}(T_d,{\bf p}) > 0$ for $k(k-1) \le d <  (k+1)k$. 
\end{theorem}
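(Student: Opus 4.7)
The approach is to model the strong cluster $\tilde{C}(o)$ in $T_d$ as a single-type Galton--Watson branching process, with the $n$-th generation consisting of the vertices of $\tilde{C}(o)$ at distance $n$ from $o$, and then to invoke the classical extinction criterion comparing the mean offspring to $1$. The branching structure is legitimate because $T_d$ is a tree and $P_{\bf p}$ is a product measure, so the subtrees rooted at distinct children of a given vertex depend on disjoint families of local states and hence evolve independently. Each non-root $x \in \tilde{C}(o)$ joins the cluster through the unique bidirectional edge to its parent $\pi(x)$ in the tree rooted at $o$, that is, via $\{x \in \omega(\pi(x))\} \cap \{\pi(x) \in \omega(x)\}$; this is what lets one use a single-type process here rather than the four-type process that appeared in Theorem \ref{thm:weaktree}.

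The heart of the argument is to compute the mean offspring $m$ at a non-root vertex. For this I would condition on the event $\{\pi(x) \in \omega(x)\}$ to obtain the conditional law of $\omega(x)$: its size is $k$ or $k+1$, with weights proportional to $(1-p)k/d$ and $p(k+1)/d$ respectively, and given the size the remaining chosen neighbors are a uniform random subset of the $d-1$ children of $x$. Each such chosen child $y$ independently chooses $x$ back with probability $(k+p)/d$. Summing the resulting indicator probabilities over the $d-1$ children produces a closed form for $m$, which simplifies to $m = k(k-1+2p)/d$; setting $m = 1$ then rearranges to the claimed critical value $p_c = (d-k(k-1))/(2k)$.

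For $0 < p < 1$ the offspring distribution is a nondegenerate mixture of two binomials, so the classical Galton--Watson dichotomy (extinction almost surely iff $m \le 1$, positive survival probability iff $m > 1$) directly turns $m > 1$ into $\tilde{\theta}(T_d,{\bf p}) > 0$ and $m \le 1$ into $\tilde{\theta}(T_d,{\bf p}) = 0$. The \emph{in particular} statements and the boundary cases $p \in \{0,1\}$ then follow from the stochastic monotonicity ${\bf p} \le {\bf p}' \Rightarrow \tilde{\theta}(T_d,{\bf p}) \le \tilde{\theta}(T_d,{\bf p}')$ established in Section 1: if $p_0 + \cdots + p_k = 1$, dominate from above by the point mass at $p_k = 1$ (the $p = 0$ case, where $m = k(k-1)/d \le 1$ exactly because $d \ge k(k-1)$); if $p_{k+1} + \cdots + p_d = 1$, dominate from below by the point mass at $p_{k+1} = 1$ (the $p = 1$ case, where $m = k(k+1)/d > 1$ exactly because $d < (k+1)k$).

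The main obstacle I expect is the bookkeeping around conditioning. To deploy Galton--Watson rigorously one must verify that, once a non-root $x$ is conditioned on being joined to its parent, the conditional distribution of $\omega(x)$ restricted to the children of $x$ is independent of everything strictly above $x$ in the tree, so that the subtrees rooted at the offspring of $x$ really are i.i.d.\ copies of the same branching process. This independence is geometrically transparent from the tree structure and the product form of $P_{\bf p}$, but it must be spelled out carefully so that the single-generation mean $m$ controls $\tilde{\theta}$ and not merely the one-step reproduction probability.
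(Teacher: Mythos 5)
Your proposal is correct, and it reaches the answer by a genuinely different (and more elementary) route than the paper. The paper runs a \emph{two-type} branching process, classifying a cluster vertex $x$ by whether $|\omega(x)|=k$ or $k+1$ (hence $k-1$ or $k$ downward edges), forms the mean matrix $M=(m_{ji})$, and invokes the multi-type extinction criterion of Theorem \ref{thm:Harris} with the Perron eigenvalue $\rho=m_{11}+m_{22}=k(k-1+2p)/d$. You instead collapse to a \emph{single-type} Galton--Watson process by conditioning on the parent edge and averaging over the two possible sizes of $\omega(x)$, obtaining the same mean $m=k(k-1+2p)/d$ and applying the classical one-type dichotomy. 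Your reduction is legitimate precisely because a child's type is independent of its parent's type --- equivalently, the columns of the paper's $M$ are proportional, so $M$ has rank one and its spectral radius is its trace. This rank-one degeneracy means the paper's assertion that ``$M$ is non-singular'' is in fact false ($m_{11}m_{22}-m_{12}m_{21}=0$), so the quoted hypothesis of Theorem \ref{thm:Harris} is not literally satisfied there; your single-type argument sidesteps this issue entirely and needs only the standard fact that a nondegenerate Galton--Watson process dies out iff $m\le 1$. Your treatment of the boundary cases also matches the paper in substance: monotonicity in ${\bf p}$ reduces the ``in particular'' clauses to the point masses $p_k=1$ and $p_{k+1}=1$, and your nondegeneracy observation (positive probability of zero offspring since $(k+p)/d<1$ there) covers the critical case $d=k(k-1)$, $m=1$, which the paper handles by a separate appeal to mean-one extinction. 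The one thing to keep explicit in a full write-up is the point you already flag: that conditional on $x\in\omega(y)$ the restriction of $\omega(y)$ to the children of $y$ has the same law for every first-generation $y$ and is independent across distinct $y$, which is what upgrades the one-step mean computation to a genuine branching process.
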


\begin{proof} 
First we assume $0<p<1$. 
The vertices $x$ strongly connected to the origin in $T_d$ without the origin have two types: we say 
\begin{enumerate} 
\item $x$ is of type 1 if there is $(x,y) \in E_\omega$ such that $\ell(y)=\ell(x)-1$ and there are $k-1$ edges $(x,z) \in E_\omega$ such that $\ell(z)=\ell(x)+1$; 
\item $x$ is of type 2 if there is $(x,y) \in E_\omega$ such that $\ell(y)=\ell(x)-1$ and there are $k$ edges $(x,z) \in E_\omega$ such that $\ell(z)=\ell(x)+1$. 
\end{enumerate}

Let $m_{j i}$ be the expected number of type $j$ vertices strongly connected to a fixed type $i$ vertex in one low level. 
Then we can calculate $m_{j i}$ as:  
\begin{eqnarray*}
m_{11} 
&=& \sum_{i=1}^{k-1} i \binom{k-1}{i} \left( p_k \frac{k}{d} \right)^i \left( 1- p_{k}\frac{k}{d}\right)^{(k-1)-i}  
= p_k \frac{k(k-1)}{d}, \\ 
m_{12} 
&=& p_k \frac{k^2}{d}, \ \ \ \ \ 
m_{21} 
= p_{k+1} \frac{(k+1)(k-1)}{d}, \ \ \ \ \ 
m_{22} 
= p_{k+1} \frac{(k+1)k}{d}, \ \ \ \ \ 
\end{eqnarray*}
Let 
\begin{eqnarray*}
M 
= 
\left(
    \begin{array}{c c}
      m_{11} & m_{12} \\
      m_{21} & m_{22} 
    \end{array}
\right)
= 
\left(
    \begin{array}{c c c c}
      p_k \frac{k(k-1)}{d} & p_k \frac{k^2}{d}   \\
      p_{k+1} \frac{(k+1)(k-1)}{d}            & p_{k+1} \frac{(k+1)k}{d}      \\
   \end{array}
 \right). 
\end{eqnarray*}
Since every elements in $M$ are positive for $0<p<1$, $M$ is strictly positive. 
Because some vertex strongly connected to more than two vertices with other types in one high level, then $M$ is non-singular. 
Since the probability that the origin is strongly connected some vertex $x$ in 1-st level is positive, and $P_{\bf p}(x $ is type i $)>0$ for each $i=1,2$, 
by Theorem \ref{thm:Harris}, we see that the maximum eigenvalue $\rho $ of $M$ is larger than $1$ if and only if $\tilde{\theta}(T_d,{\bf p}) > 0$. 
The maximum eigenvalue $\rho $ of $M$ is  
\begin{eqnarray*}
p_k \frac{k(k-1)}{d} + p_{k+1} \frac{(k+1)k}{d} 
=  \frac{k( 2p +k-1) }{d}.
\end{eqnarray*}
Hence $\rho > 1 $ if and only if 
\begin{eqnarray*}
p > \frac{d-k(k-1)}{2k} . 
\end{eqnarray*}
If $d < (k+1)k$, then $ \frac{d-k(k-1)}{2k} <1$. 
Since $\{|\tilde{C}|=\infty \}$ is an increasing event, if $p_{k+1}=1$ then $\tilde{\theta}(T_d,{\bf p}) > 0$. 
Similarly, if $d > k(k-1)$, then $0 < \frac{d-k(k-1)}{2k}$. 
Hence if $p_k=1$, then $\tilde{\theta}(T_d,{\bf p}) = 0$.

If $p_k=1$ with $d=k(k-1)$, then the vertices $x \in V\backslash \{o\}$ strongly connected to the origin in $T_d$ have the property that there is $(x,y) \in E_\omega$ such that $\ell(y)=\ell(x)-1$ and there are $k-1$ edges  $(x,z) \in E_\omega$ such that $\ell(z)=\ell(x)+1$. 
Hence the the expected number of vertices strongly connected to a vertex in one low level is $1$. 
By theory of branching process, we conclude $\tilde{\theta}(T_d,{\bf p}) = 0$. 
\end{proof}

\section{Sufficient conditions of $\theta(G,{\bf p})=0$, $\tilde{\theta}(G,{\bf p})=0$} \label{sec:FiniteCluster}

We saw, for the regular tree $T_d$ with $d\ge 3$, if $p_0+p_1 = 1$ then $\theta(T_d,{\bf p})=0$, and if $p_k=1$ for $k(k-1) \le d$ then $\tilde{\theta}(T_d,{\bf p}) = 0$. 
We prove these are valid for every $G$ with $d\ge 3$ except $d = k(k-1) $. 

\begin{theorem}\label{thm:near0}
Assume $d\ge 3$. 
\begin{enumerate}
\item \label{item:1} If $p_0+p_1$ is sufficiently near to $1$, then $ \theta(G,{\bf p}) =0$.
\item \label{item:2}  
If $k(k-1)<d$ and $p_0+p_1+\cdots+p_k$ is sufficiently near to $1$, then $ \tilde{\theta}(G,{\bf p}) =0$.
\end{enumerate}
\end{theorem}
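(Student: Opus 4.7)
I would use the first-moment method on self-avoiding paths (SAPs) from $o$, combined with a $2\times 2$ transfer matrix that absorbs the dependence arising from the two possible orientations of each edge of the path. A plain union bound over orientations turns out to be too weak at $d=3$ in part (i), and the transfer matrix provides exactly the sharpening needed. For any $x\in V$ and any distinct $y,z\in L_x$, set
\begin{equation*}
\beta_1 := P_{\bf p}(y\in\omega(x)) = \frac{1}{d}\sum_{k=1}^d kp_k,\qquad \beta_2 := P_{\bf p}(\{y,z\}\subset\omega(x)) = \frac{1}{d(d-1)}\sum_{k=2}^d k(k-1)p_k.
\end{equation*}
The event $\{o\leftrightharpoons\partial B(n)\}$ implies the existence of a weakly open SAP of length exactly $n$ from $o$ (take the $n$-edge prefix of any witnessing path), and similarly for $\{o\leftrightarrow\partial B(n)\}$. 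Writing $N_n$ (resp.\ $\tilde N_n$) for the number of weakly (resp.\ strongly) open SAPs of length $n$ from $o$, Markov's inequality gives $\theta(G,{\bf p})\le\mathbb{E}_{\bf p}(N_n)$ and $\tilde\theta(G,{\bf p})\le\mathbb{E}_{\bf p}(\tilde N_n)$; it suffices to show the right-hand sides tend to $0$.

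For (i), fix a SAP $\gamma=(x_0,\dots,x_n)$ and set $a_i=I\{x_i\in\omega(x_{i-1})\}$, $b_i=I\{x_{i-1}\in\omega(x_i)\}$, so the $i$-th edge is weakly open iff $a_i\vee b_i=1$. Bounding $a_i\vee b_i\le a_i+b_i$ and expanding $\prod_i (a_i+b_i)$ over orientation sequences $\tau\in\{a,b\}^n$, the independence of the $\omega(x_j)$ across $j$ makes the expectation factorise into vertex weights; at an interior $x_j$ the weight depends only on $(\tau_j,\tau_{j+1})$ and equals $1$, $\beta_1$, $\beta_1$, or $\beta_2$ according to whether $0$, $1$, $1$, or $2$ of $x_{j\pm 1}$ are forced to lie in $\omega(x_j)$. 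The orientation sum is thus $v^{\top} M^{n-1} w$ with
\begin{equation*}
M = \begin{pmatrix} \beta_1 & 1 \\ \beta_2 & \beta_1 \end{pmatrix},\qquad \rho(M)=\beta_1+\sqrt{\beta_2}.
\end{equation*}
Hence $P_{\bf p}(\gamma\text{ weakly open})\le C\rho(M)^{n-1}$ uniformly in $\gamma$, and combined with $\sigma_G(n)\le (d-1)^n$ this gives $\mathbb{E}_{\bf p}(N_n)\le C'((d-1)\rho(M))^n$. Setting $\delta:=1-(p_0+p_1)$, one checks $\beta_1\le 1/d+\delta$ and $\beta_2\le\delta$, so $(d-1)\rho(M)\to (d-1)/d<1$ as $\delta\downarrow 0$; for $p_0+p_1$ sufficiently near $1$, $\mathbb{E}_{\bf p}(N_n)\to 0$ and $\theta(G,{\bf p})=0$.

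For (ii), a SAP is strongly open iff $a_i=b_i=1$ for every $i$; by independence across vertices, each interior $x_j$ contributes $\beta_2$ (both $x_{j\pm 1}$ must lie in $\omega(x_j)$) and each endpoint contributes $\beta_1$, so $P_{\bf p}(\gamma\text{ strongly open})=\beta_1^2\beta_2^{n-1}$ and $\mathbb{E}_{\bf p}(\tilde N_n)\le C''((d-1)\beta_2)^n$. With $\delta:=\sum_{j>k}p_j$ small, the large-$j$ contributions to $\beta_2$ are bounded by $\delta$ and the $j\le k$ contributions by $k(k-1)/(d(d-1))$, giving $(d-1)\beta_2\le k(k-1)/d+(d-1)\delta$; this is $<1$ whenever $k(k-1)<d$ and $\delta$ is sufficiently small, so $\tilde\theta(G,{\bf p})=0$.

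The main obstacle is part (i): the indicators $b_j$ and $a_{j+1}$ both depend on $\omega(x_j)$, so a crude union bound over orientations yields effective rate $2(d-1)\beta_1$, which already exceeds $1$ at $(d,p_1)=(3,1)$. The transfer-matrix bookkeeping above replaces this by the sharper rate $(d-1)(\beta_1+\sqrt{\beta_2})$, which tends to $(d-1)/d<1$ as $p_0+p_1\to 1$ for $d\ge 3$, consistent with the branching-process threshold on the universal cover $T_d$ from Theorem~\ref{thm:weaktree}.
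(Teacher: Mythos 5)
Your proposal is correct and follows essentially the same route as the paper: a first-moment bound over self-avoiding paths from $o$, in which the per-path weak-openness probability is controlled by a two-state recursion at each vertex (your transfer matrix $M$ with spectral radius $\beta_1+\sqrt{\beta_2}$ is the same object as the paper's two-term linear recurrence with characteristic root $\alpha$, and both end up bounded by $p_1/d+\delta+\sqrt{\delta}$ with $\delta=1-(p_0+p_1)$), while the strong case uses the exact product $\beta_1^2\beta_2^{\,n-1}$ just as in the paper. The only cosmetic difference is that the paper counts paths via the connective constant $\lambda(G)$ rather than your cruder $(d-1)^n$, but it then bounds $\lambda(G)\le d-1$ anyway, so the final estimates coincide.
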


\begin{proof}
\noindent
(\ref{item:1})
For $x\in V$ and $y, z \in L_x$ with $y \not= z$, 
let 
\begin{eqnarray*}
p_0'(d) 
&:=& P_{\bf p}((x,y) \not \in E_\omega \text{ and } (x,z) \not \in E_\omega ) \\ 
&=& p_0+\frac{d-2}{d}p_1+\sum_{i=2}^{d-2} p_i\frac{(d-i)(d-i-1)}{d(d-1)} \\
\frac{p_1'(d)}{2}
&:=& P_{\bf p}((x,y) \in E_\omega \text{ and } (x,z) \not \in E_\omega ) 
= \frac{1}{d}p_1+\sum_{i=2}^{d-1} p_i\frac{i(d-i)}{d(d-1)} \\ 
p_2'(d) 
&:=&  P_{\bf p}((x,y) \in E_\omega \text{ and } (x,z) \in E_\omega ) 
= \sum_{i=2}^d p_i\frac{i(i-1)}{d(d-1)} \\ 
q(d) 
& :=&  P_{\bf p}((x,y) \in E_\omega \text{ or } (y,x) \in E_\omega ) 
>0. 
\end{eqnarray*}
Then as the proof of Theorem \ref{thm:2-regular}, for a self-avoiding path $(x_0,x_1, \dots ,x_n)$ we have 
\begin{eqnarray*}
P_{\bf p} ( \cap_{i=1}^{n}\{ (x_{i-1},x_i)\in E_\omega \text{ or } (x_i,x_{i-1}) \in E_\omega \} ) 
= \frac{\alpha^n(q(d)-\beta) - \beta^n(q(d)-\alpha) }{\alpha-\beta}, 
\end{eqnarray*}
where 
\begin{eqnarray*}
\alpha &=& \frac{p_1'(d)+p_2'(d)}{2} + \sqrt{\left(p_0'(d) + \frac{p_1'(d)}{2}+\frac{p_2'(d)}{4} \right)p_2'(d)}, \\ 
\beta &=& \frac{p_1'(d)+p_2'(d)}{2} - \sqrt{\left(p_0'(d) + \frac{p_1'(d)}{2}+\frac{p_2'(d)}{4} \right)p_2'(d)}. 
\end{eqnarray*}
Let $\Gamma (n)$ be the set of self-avoiding paths of $G$ with the length $n$ and beginning at the origin, and  
$\Gamma' (n)$ the set of self-avoiding paths of $G$ with the length $n$ and bargaining at the origin such that each edge is weakly connected, 
$N(n)$ the number of the elements in $\Gamma' (n)$. 
Since for $n$ there is $\epsilon = \epsilon(n) >0$ such that $\sigma_G (n) \le (\lambda (G) +\epsilon(n))^n  $ and $\lim_{n \to \infty} \epsilon (n)=0$, we have 
\begin{eqnarray*}
\theta(G,{\bf p}) 
&\le & P_{\bf p} (N(n)\ge 1)  
\le \mathbb{E}_{\bf p} (N(n)) 
= \sum_{\gamma \in \Gamma(n)} P_{\bf p} (\gamma \in \Gamma' (n)) \\
&\le & \sigma_G (n) \frac{\alpha^n(q(d)-\beta) - \beta^n(q(d)-\alpha) }{\alpha-\beta} \\ 
&\le &  ((\lambda (G) + \epsilon(n))\alpha)^n \frac{|q(d)-\beta| }{\alpha-\beta} 
+ ((\lambda (G) +\epsilon(n))|\beta|)^n \frac{|q(d)-\alpha| }{\alpha-\beta}. 
\end{eqnarray*}
Since $|\beta|<\alpha $, if $(\lambda (G) + \epsilon(n))\alpha<1 $ for every sufficiently large $n$, then $ \theta(G,{\bf p}) =0$. 
Thus if $\lambda (G)\alpha <1 $, then $ \theta(G,{\bf p}) =0$. 
If $p_0+p_1$ is sufficiently near to $1$, then we obtain 
\begin{eqnarray*}
\alpha 
&=& \frac{p_1'(d)+p_2'(d)}{2} + \sqrt{\left(p_0'(d) + \frac{p_1'(d)}{2}+\frac{p_2'(d)}{4} \right)p_2'(d)} \\ 
&\le & \frac{1}{d}p_1 + \sum_{i=2}^d p_i  + \sqrt{\sum_{i=2}^d p_i} \\ 
&\le & \frac{1}{d}p_1 + (1-(p_0+p_1))  + \sqrt{1-(p_0+p_1)} \\ 
&<& \frac{1}{d-1} \le \frac{1}{\lambda(G)}.  
\end{eqnarray*}
Hence if $p_0+p_1$ is sufficiently near to $1$, then $ \theta(G,{\bf p}) =0$.

\noindent
(\ref{item:2})
For each self-avoiding path $\gamma =(x_0, x_1, \dots ,x_n)$ with length $n$, we have 
\begin{eqnarray*}
&& P_{\bf p} ( (x_{i-1},x_i) \in E_\omega \text{ and } (x_i,x_{i-1}) \in E_\omega \text{ for } i=1,2,\dots ,n) \\
&=& \left(\sum_{i=1}^d p_i\frac{\binom{d-1}{i-1}}{\binom{d}{i}} \right) \left(\sum_{i=2}^d p_i\frac{\binom{d-2}{i-2}}{\binom{d}{i}} \right)^{n-1}  
\left(\sum_{i=1}^d p_i\frac{\binom{d-1}{i-1}}{\binom{d}{i}} \right)  \\ 
&=& p_2'(d)^{n-1} \left(\sum_{i=1}^d p_i\frac{i}{d} \right)^2.  
\end{eqnarray*}
Let $\tilde{\Gamma}' (n)$ be the set of self-avoiding paths of $G$ with the length $n$ and bargaining at the origin such that each edge is strongly connected, and $\tilde{N}(n)$ the number of the elements in $\tilde{\Gamma}' (n)$. 
Then we get 
\begin{eqnarray*}
\tilde{\theta}(G,{\bf p}) 
&\le & P_{\bf p} (\tilde{N}(n)\ge 1)  
\le \mathbb{E}_{\bf p} (\tilde{N}(n)) 
= \sum_{\gamma \in \Gamma(n)} P_{\bf p} (\gamma \in \tilde{\Gamma}' (n)) \\
&\le & \sigma_G (n) p_2'(d)^{n-1} \left(\sum_{i=1}^d p_i\frac{i}{d} \right)^2\\ 
&\le & \left( (\lambda (G) +\epsilon(n)) p_2'(d) \right)^{n-1} (\lambda (G) +\epsilon(n)) \left(\sum_{i=1}^d p_i\frac{i}{d} \right)^2. 
\end{eqnarray*}
Thus if $\lambda (G) p_2'(d) <1 $, then $ \tilde{\theta}(G,{\bf p}) =0$. 
If $k(k-1)<d$  and $p_0+p_1+\cdots+p_k$ is sufficiently near to $1$, then we obtain 
\begin{eqnarray*}
p_2'(d) 
&=& \frac{1}{d-1}\sum_{i=2}^k p_i\frac{i(i-1)}{d} + \sum_{i=k+1}^d p_i \frac{i(i-1)}{d(d-1)} \\ 
&\le & \frac{1}{d-1}\frac{k(k-1)}{d} + (1-(p_0+p_1+\cdots+p_k )) \\  
&<& \frac{1}{d-1} \le \frac{1}{\lambda(G)}. 
\end{eqnarray*}
Hence if $p_0+p_1+\cdots+p_k$ is sufficiently near to $1$, then $ \theta(G,{\bf p}) =0$.
\end{proof}

\section{Sufficient conditions of $\theta(G,{\bf p})>0$, $\tilde{\theta}(G,{\bf p})>0$}\label{sec:InfiniteCluster}

A {\it directed planar graph} is a pair $G=(V,E)$ such that a countable set $V \subset \mathbb{R}^2$ and 
\begin{eqnarray*} 
E \subset \{ e : [0,1] \to \mathbb{R}^2 \mid \text{ continuous, }e(0),e(1)\in V, e(0) \not= e(1) \}
\end{eqnarray*}
satisfying $e=e'$ in $E$ if $e(0)=e'(0) $, $e(1)=e'(1)$, and for any $e\in E$ there is $e^{-1}\in E$ such that $e^{-1}(t)=e(1-t)$ for all $t\in [0,1]$, and 
$e((0,1)) \cap e'((0,1)) =\emptyset $ for all $e,e'\in E$ with $e' \not= e$ and $e' \not= e^{-1}$.
We can regard a directed planar graph as a directed graph by considering $e(0)$ and $e(1)$ to be the ordered pair of vertices. 
We assume directed planar graphs are $d$-regular ($d\ge 3$) and vertex transitive in the sense of a directed graph. 
Sometime we denote the image $e([0,1])$ by $e$. 

Let $\mathcal{M}$ be the set of all connected components in $\mathbb{R}^2 \backslash \cup_{e\in E} e$. 
For each $M\in \mathcal{M}$, we choose a point $x^*=x^*(M) \in M$, and set $V^*:= \{x^*(M) \mid M \in \mathcal{M}\}$. 
For each $e \in E$, there are connected components $M_e(0), M_e(1) \in \mathcal{M}$ such that $e= \overline{M_e(0)}\cap \overline{M_e(1)} $, where $\overline{M_e(i)} $ is the closure of $M_e(i)$, and the direction of $e$ is corresponding to the direction of the counter-clockwise rotation of the boundary of $M_e(0)$ and the direction of the clockwise rotation of the boundary of $M_e(1)$. 
For each $e \in E$, we give a continuous map $e^*:[0,1] \to \overline{M_e(0)} \cup \overline{M_e(1)}$ such that 
$e^*(0)=x^*(M_e(0))$, $e^*(1)=x^*(M_e(1))$, and 
$(e^{-1})^* = (e^*)^{-1}$, 
$e^*((0,1))\cap (e')^*((0,1)) =\emptyset $ for all $e^*,(e')^* $ with $e' \not= e$ and $e' \not= e^{-1}$.
Then the directed planar graph $G^* =(V^*,E^*)$ is called the {\it dual graph} of $G$.  
The map $m:E\to E^*$ defined by $m(e)=e^*$ is bijective.  

A {\it self-avoiding polygon} ({\it SAP}) with length $n$ in $G$ is a path $(x_0, x_1, \dots ,x_{n-1})$ such that $x_{n-1} \sim x_0$ and $x_i \sim x_{i+1}$ for all $i=0,1,\dots ,n -2$, and $x_i \not= x_j$ for all $i \not= j$. 
For $k \ge 1$, let $N(k) := \min\{n \mid $ there is a self-avoiding polygon in $G^*$ with the length $n$ such that $B(k) $ in $G$ is enclosed$ \}$. 
The proof of the theorem below is based on the arguments in \cite{MR1707339} for the proof of the critical probability is less than 1. 

\begin{proposition}\label{prop:b(p)}
Assume $N(k) \to \infty$ as $k \to \infty$. 
Let 
\begin{eqnarray*}
b({\bf p}) 
:= \max_{i=1,2,\dots,d-1} \left\{ \left( \sum_{j=0} ^{d-i} p_j\prod_{k=0}^{i-1} \frac{(d-k)-j}{d-k} \right)^{\frac{1}{i}}\right\}.  
\end{eqnarray*}  
\begin{enumerate}
\item\label{item:b2} If $\lambda (G^*) b({\bf p})^2<1$, then $\theta(G,{\bf p})>0$. 
\item\label{item:b1} If $\lambda (G^*) b({\bf p})<1$ and $p_0+p_1<1$, then $\tilde{\theta}(G,{\bf p})>0$. 
\end{enumerate}
\end{proposition}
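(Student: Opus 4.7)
The plan is a Peierls-type dual-contour argument, analogous to the classical proof that $p_c < 1$ for bond percolation on planar lattices. Fix a large $k$ and let $E_k$ be the event that some self-avoiding polygon $\gamma^*$ in $G^*$ encloses $B(k)$ and has the property that every primal edge crossed by $\gamma^*$ is weakly closed (for (i)), respectively not strongly open (for (ii)). If the weak (respectively strong) cluster of every vertex of $B(k)$ is finite, then the union of these clusters is a finite set containing $B(k)$, whose topological boundary in the plane produces such a $\gamma^*$, necessarily of length at least $N(k)$. Conversely, on $E_k^c$ some vertex of $B(k)$ lies in an infinite weak (respectively strong) cluster, and by vertex-transitivity together with a union bound over $B(k)$ this yields $\theta(G,{\bf p}) > 0$ (respectively $\tilde{\theta}(G,{\bf p}) > 0$). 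So it suffices to prove $P_{\bf p}(E_k) < 1$ for some sufficiently large $k$, which I would establish by a first moment bound.

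Given a dual SAP $\gamma^*$ of length $n$, let $V(\gamma^*) \subset V$ be the set of primal endpoints of its crossed primal edges, let $i_v$ be the number of such edges incident to $v \in V(\gamma^*)$, and let $T_v \subset L_v$ be the corresponding neighbor set; note $\sum_v i_v = 2n$. By the definition of $b({\bf p})$, we have $P_{\bf p}(\omega(v) \cap S = \emptyset) \le b({\bf p})^{|S|}$ for every $S \subset L_v$. For (i), the event that every primal edge of $\gamma^*$ is weakly closed is exactly $\bigcap_{v \in V(\gamma^*)} \{\omega(v) \cap T_v = \emptyset\}$, since each edge $\{x,y\}$ demands both $y \notin \omega(x)$ and $x \notin \omega(y)$; by independence of the $\omega(v)$'s, its probability is at most $\prod_v b({\bf p})^{i_v} = b({\bf p})^{2n}$. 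The number of dual SAPs of length $n$ enclosing $B(k)$ is at most $O(n)\sigma_{G^*}(n) \le O(n)(\lambda(G^*)+o(1))^n$, since any such SAP must cross a fixed dual ray from near $o$. Combining, under $\lambda(G^*) b({\bf p})^2 < 1$ the first moment $\sum_{n \ge N(k)} O(n)(\lambda(G^*) b({\bf p})^2 + o(1))^n$ tends to $0$ as $k \to \infty$ thanks to $N(k) \to \infty$, which proves (i).

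For (ii) the scheme is the same but the per-edge ``blocked'' event becomes the disjunction $\{y \notin \omega(x)\} \cup \{x \notin \omega(y)\}$. The natural approach is to union-bound over the $2^n$ choices of a ``blocking side'' for each of the $n$ SAP edges, expressing the event as a sum of $2^n$ factored events of the form $\bigcap_v \{\omega(v) \cap B_v = \emptyset\}$ with $\sum_v |B_v| = n$, each of probability at most $b({\bf p})^n$. The hypothesis $p_0 + p_1 < 1$ is used only to exclude the degenerate case where no strongly open edge ever exists and $\tilde{\theta} \equiv 0$ trivially. The main obstacle is that the naive orientation sum yields a per-SAP bound $(2 b({\bf p}))^n$, giving only the weaker condition $\lambda(G^*) \cdot 2 b({\bf p}) < 1$; matching the stated threshold $\lambda(G^*) b({\bf p}) < 1$ requires a sharper estimate, either by restricting to a canonically chosen side-assignment along the cycle $\gamma^*$ and arguing away the overcount, or by a direct analysis of the joint law of $\omega(v) \cap T_v$ at primal vertices of high SAP-multiplicity that absorbs the factor $2^n$ into the connective-constant term.
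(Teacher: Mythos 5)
Your part~(i) is correct and is essentially the paper's own argument: the same per-vertex estimate $P_{\bf p}(\omega(v)\cap S=\emptyset)=\sum_{j=0}^{d-|S|}p_j\binom{d-|S|}{j}/\binom{d}{j}\le b({\bf p})^{|S|}$, the same factorization over the endpoints of the crossed edges (total multiplicity $2n$) giving $b({\bf p})^{2n}$ per contour, and the same first-moment bound $\sum_{n\ge N(k)}O(n)\,\sigma_{G^*}(n)\,b({\bf p})^{2n}\to 0$ as $k\to\infty$. The only difference is cosmetic: you pass from ``with positive probability some vertex of $B(k)$ lies in an infinite weak cluster'' to $\theta(G,{\bf p})>0$ by a union bound over $B(k)$ and vertex-transitivity, whereas the paper explicitly rewires the states along a path from $o$ to that vertex; both steps are valid, and yours is slightly simpler.

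Part~(ii) is where your proposal has a genuine gap, and you have correctly located it yourself: the Peierls step only guarantees that each of the $n$ crossed edges is missing \emph{at least one} of its two orientations, your union bound over the $2^n$ side-assignments gives $(2b({\bf p}))^n$ per contour rather than $b({\bf p})^n$, and neither of the repair strategies you sketch is carried out. Moreover the factor $2^n$ cannot simply be argued away for the event you are actually bounding: for a contour whose crossed edges are pairwise vertex-disjoint (which happens up to $O(1)$ corrections for large box contours in $\mathbb{L}^2$), the probability that every crossed edge fails to be bidirectional equals $(1-c^2)^n$ with $c=\sum_i p_i\,i/d$, while $b({\bf p})\ge 1-c$ and $1-c^2>1-c$ for $0<c<1$. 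For comparison, the paper claims the $b({\bf p})^n$ bound by ``observing only the vertices enclosed by $\gamma^*$'', which amounts to bounding the probability that every crossed edge is missing its \emph{inside-to-outside} orientation --- an event that is indeed determined by the enclosed vertices and has probability at most $b({\bf p})^n$ --- but that event is strictly smaller than the one delivered by the duality step, so the paper's one-line justification does not resolve the difficulty you raise either. As written, your argument proves~(ii) only under the stronger hypothesis $2\lambda(G^*)b({\bf p})<1$; reaching the stated threshold $\lambda(G^*)b({\bf p})<1$ would require either a strengthened duality statement (producing a contour all of whose crossed edges are missing a \emph{prescribed} orientation) or a genuinely different estimate, and neither is supplied.
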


\begin{proof} 
\noindent 
(\ref{item:b2}) 
If $p_0=1$, then we have $b({\bf p})=1$. 
Since $\lambda (G^*) \ge 1$, the assumption $\lambda (G^*) b({\bf p})^2<1$ gives $p_0<1$, that is, $p_1+\dots+p_d>0$. 
For a subgraph $G_\omega=(V,E_\omega)$ of $G$ we define the subgraph $G_\omega^*=(V^*, E^*_\omega)$ in $G^*$ by $ E^*_\omega=m(E_\omega) $. 

\begin{lemma}
For $n,k\in \mathbb{N}$ let $\gamma^* $ be a self-avoiding polygon in $G^*$ with length $n$ such that $B(k) $ in $G$ is enclosed. 
Then 
\begin{eqnarray*}
P_{\bf p}(\text{every edge on } \gamma^* \text{ are not in } E^*_\omega) \le b({\bf p})^{2n}.
\end{eqnarray*}
\end{lemma}	

\begin{proof}
Each edge $e^*$ on $\gamma^*$ has two neighbor vertices $ x,y \in V$ such that $e^* \in \{ m (x,y),m( y,x)\}$. 
Let $W$ be the set of vertices in $V$ around $\gamma^* $.  
The event that every edge on $\gamma^*$ are not in $E^*_\omega$ depends only on the state of vertices in $W$.
If $x \in W$ has exactly $i$ neighbor edge on $\gamma^* $ ($1\le i \le d-1$), then 
\begin{eqnarray*}
P_{\bf p}(m(x,y) \not \in \gamma^*  \text{ for any } y \in \omega(x)) 
= \sum_{j=0} ^{d-i} p_j\frac{\binom{d-i}{j}}{\binom{d}{j}} 
= \sum_{j=0} ^{d-i} p_j\prod_{k=0}^{i-1} \frac{(d-k)-j}{d-k}.
\end{eqnarray*}
Hence 
\begin{eqnarray*}
P_{\bf p}(\text{every edge on } \gamma^* \text{ are not in } E^*_\omega) 
&=& \prod_{x \in W} P_{\bf p}(m(x,y) \not \in \gamma^*  \text{ for every } y \in \omega(x)) \\ 
&\le & \max_{i=1,2,\dots,d-1} \left\{ \left( \sum_{j=0} ^{d-i} p_j\prod_{k=0}^{i-1} \frac{(d-k)-j}{d-k} \right)^{\frac{1}{i}}\right\}^{2n}. 
\end{eqnarray*}
\end{proof}

For $k \ge 1$, let $M_k(n)$ be the set of self-avoiding polygons in $G^*$ with length $n$ such that $B(k)$ is enclosed and its edges are not in $ E^*_\omega$. 
Let $F_k$ be the event that there exists a self-avoiding polygon in $G^*$ such that $B(k)$ is enclosed and its edges are not in $E^*_\omega$. 
Since ${\bf p}$ satisfies $\lambda (G^*) b({\bf p})^2<1$, there is $\epsilon >0$ such that $(\lambda (G^*) +\epsilon)b({\bf p})^2<1$.  
The number of self avoiding polygons in $G^*$ having length $n$ and beginning at the origin is not greater than $\sigma_{G^*}(n-1)$. 
Using the inequality $\sum_{k=s}^{\infty} k z^k=\frac{s -(s-1)z}{(1-z)^2}z^s \le \frac{s}{(1-z)^2}z^s$ for $0<z<1$ and $s \ge 1$, 
we have 
\begin{eqnarray*}
P_{\bf p}(F_k)
& = & P_{\bf p} \left( |M_k(n)| \ge 1 \text{ for some } n\right) \\ 
&\le & \sum_{n=N(k)}^\infty \sum_{\gamma^*:\text{SAP in }G^* } P_{\bf p}(\gamma^* \in M_k(n)) \\   
&\le & \sum_{n=N(k)}^\infty n \sigma_{G^*} (n-1) b({\bf p})^{2n} \\
&\le & \frac{1}{(\lambda (G^*) +\epsilon)} \sum_{n=N(k)}^\infty n \left((\lambda (G^*) +\epsilon)b({\bf p})^2\right)^n \\ 
&\le & \frac{N(k)\left((\lambda (G^*) +\epsilon)b({\bf p})^2\right)^{N(k)}}{(\lambda (G^*) +\epsilon)(1-(\lambda (G^*) +\epsilon)b({\bf p})^2)^2} \\ 
&<& \frac{1}{2} 
\end{eqnarray*}
for sufficiently large $k$. 
The event $F_{k+1}$ and hence $F_{k+1}^c := \Omega \backslash F_{k+1}$ are depend only on $V\backslash B(k)$. 
For each $\omega \in F_{k+1}^c $ there exists a vertex $x \in B(k+1)$ such that there exists a self-avoiding weakly connected infinite path on $V\backslash B(k)$ beginning at $x$.  
(By giving an order on $V$, we can determine the vertex $x$ unique.) 
There is a path $(z_0=o,z_1,\dots, z_k, z_{k+1}=x) $ from the origin $o$ to $x$ in $B(k) \cup \{x\}$ with the length $k+1$.  
The event that $u_{\ell+1} \in \omega(u_\ell) $ for $\ell=0,1,\dots ,k$ depends only on $B(k)$. 
The probability of this event is $\left( \sum_{i=1}^d\frac{i}{d} p_i \right)^{k+1}>0$. 
Hence
\begin{eqnarray*}
P_{\bf p}(|C|=\infty) 
&\ge& P_{\bf p}(F_{k+1}^c \cap \{ x \leftrightharpoons o\})  \\ 
&=& P_{\bf p}(F_{k+1}^c \cap \{ x \leftrightharpoons o\}|F_{k+1}^c)P_{\bf p}(F_{k+1}^c)  \\ 
&\ge & \left( \sum_{i=1}^d\frac{i}{d} p_i \right)^{k+1}(1- P_{\bf p} (F_{k+1})) \\
&>& 0. 
\end{eqnarray*}

\noindent 
(\ref{item:b1}) 
For a self-avoiding polygon $\gamma^* $ in $G^*$ with length $n$ such that $B(k) $ is enclosed, 
as the case of (\ref{item:b2}), 
observing the set of vertices around $\gamma^* $ such that enclosed by $\gamma^* $, we have 
\begin{eqnarray*}
P_{\bf p}(\text{every edge on } \gamma^* \text{ are not strongly connected} ) \le b({\bf p})^{n}.
\end{eqnarray*}
Let $\tilde{F}_k$ be the event that there exists a self-avoiding polygon in $G^*$ such that $B(k)$ is enclosed and its edges are not  strongly connected. 
Then we have $P_{\bf p}(\tilde{F}_k)< \frac{1}{2} $ for sufficiently large $k$ by the same proof of (\ref{item:b2}).  
The event $\tilde{F}_{k+1}$ and hence $\tilde{F}_{k+1}^c := \Omega \backslash \tilde{F}_{k+1}$ are depend only on $V\backslash B(k)$. 
For each $\omega \in \tilde{F}_{k+1}^c $ there exists a vertex $\tilde{x} \in B(k+1)$ such that there exists a self-avoiding strongly connected infinite path $(\tilde{x}, \tilde{y},\dots )$ beginning at $\tilde{x}$ on $V\backslash B(k)$. 
(By giving an order on $V$, we can determine the vertex $\tilde{x}$ unique.) 
There is a path $\tilde{\gamma} = (\tilde{z}_0 = o,\tilde{z}_1,\dots,\tilde{z}_k,\tilde{z}_{k+1}=\tilde{x}) $ from the origin $o$ to $\tilde{x}$ in $B(k) \cup \{\tilde{x}\}$ with the length $k+1$.  
We fix $\ell \ge 2$ with $p_\ell >0$. 
For $\omega \in \tilde{F}_{k+1}^c$ we define $\omega' $ as 
\begin{eqnarray*}
\omega'(v) = 
\begin{cases}
\omega(v) \ \ \text{ if } \ v \not\in \tilde{\gamma},  \\
\{ \tilde{z}_k , \tilde{y}, \tilde{x}(j^x_1), \dots , \tilde{x}(j^x_{\ell-2}) \} \ \ \text{ if } \ v = \tilde{x}, \\ 
\{ \tilde{z}_{i-1} ,  \tilde{z}_{i+1}, \tilde{z_i}(j^i_1), \dots , \tilde{z_i}(j^i_{\ell-2}) \} \ \ \text{ if } \ v = \tilde{z_i} \text{ for } i=1,2,\dots ,k, \\ 
\{ \tilde{z}_1 ,  \tilde{z_i}(j^1_1), \dots , \tilde{z_i}(j^1_{\ell-1}) \} \ \ \text{ if } \ v = o \\ 
\end{cases}
\end{eqnarray*}
where $ j^i_h$ can be determined unique by using the order on $L_{\tilde{z}_i}$.  
We set $\tilde{G}_{k+1}:=\{\omega' \mid \omega \in \tilde{F}_{k+1}^c \}$. 
Then 
\begin{eqnarray*}
P_{\bf p}(|\tilde{C}|=\infty) 
&\ge& P_{\bf p}(\tilde{F}_{k+1}^c  \cap \{ \tilde{x} \leftrightarrow o\}) \\  
&\ge & P_{\bf p}(\tilde{G}_{k+1})   \\ 
&\ge & \left( \frac{\ell ! (d-\ell)!}{d!}p_\ell \right)^{k+2} P_{\bf p}(\tilde{F}_{k+1}^c)  \\ 
&\ge & \left( \frac{\ell ! (d-\ell)!}{d!}p_\ell \right)^{k+2} (1- P_{\bf p} (\tilde{F}_{k+1}))  \\ 
&>& 0. 
\end{eqnarray*}

\end{proof}

\begin{theorem}
Assume $N(l) \to \infty$ as $l \to \infty$. 
\begin{enumerate}
\item If $d =k $ or $\lambda(G^*)  < \left(\frac{d}{d-k}\right)^2$, 
and $p_k+\cdots + p_d$ is sufficiently near to $1$, then $\theta(G,{\bf p})>0$. 
\item If $d =k $ or 
$\lambda(G^*)  < \frac{d}{d-k} $, 
and $p_k+\cdots + p_d$ is sufficiently near to $1$, then $\tilde{\theta}(G,{\bf p})>0$. 
\end{enumerate}  
\end{theorem}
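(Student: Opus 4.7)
The plan is to deduce the theorem directly from Proposition \ref{prop:b(p)} by showing that $b({\bf p}) \to (d-k)/d$ (or to $0$ when $d=k$) as $p_k+\cdots+p_d \to 1$. Setting $\epsilon := p_0+\cdots+p_{k-1}$, the hypothesis ``$p_k+\cdots+p_d$ sufficiently near to $1$'' reads $\epsilon \to 0$. I would first rewrite
\[
\prod_{\ell=0}^{i-1}\frac{(d-\ell)-j}{d-\ell} \;=\; \binom{d-i}{j}\Big/\binom{d}{j}
\]
for $j \le d-i$ (the product vanishing for $j > d-i$), and observe that this ratio is nonincreasing in $j$ (the ratio of successive values is $(d-i-j)/(d-j)\le 1$), takes the value $1$ at $j=0$, and at $j=k$ equals $\prod_{\ell=0}^{i-1}(d-\ell-k)/(d-\ell)$, each factor of which is at most $1-k/d$. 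Hence the value at $j=k$ is bounded above by $((d-k)/d)^i$.

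Splitting the sum defining $f_i({\bf p}) := \sum_{j=0}^{d-i} p_j \binom{d-i}{j}/\binom{d}{j}$ at $j=k$, the $j<k$ part contributes at most $\epsilon$, while the $j \ge k$ part (nonempty only when $i \le d-k$) is bounded by $((d-k)/d)^i$ uniformly. Therefore
\[
f_i({\bf p}) \;\le\; \epsilon + \left(\frac{d-k}{d}\right)^{\!i}
\]
for every $1 \le i \le d-1$, with the convention $(d-k)/d = 0$ when $d=k$. Taking $i$-th roots, using the subadditivity $(a+b)^{1/i} \le a^{1/i}+b^{1/i}$, and absorbing $\epsilon^{1/i}$ into the uniform bound $\epsilon^{1/(d-1)}$ (valid for $\epsilon < 1$), I obtain
\[
b({\bf p}) \;\le\; \epsilon^{1/(d-1)} + \frac{d-k}{d}.
\]

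The theorem now follows. Assuming $\lambda(G^*) < (d/(d-k))^2$, one has $\lambda(G^*)((d-k)/d)^2 < 1$, so choosing $\epsilon$ small enough gives $\lambda(G^*) b({\bf p})^2 < 1$, and Proposition \ref{prop:b(p)}(i) yields $\theta(G,{\bf p})>0$; the analogous argument using $\lambda(G^*) < d/(d-k)$ with Proposition \ref{prop:b(p)}(ii) handles $\tilde{\theta}(G,{\bf p})$. The case $d=k$ is easier still, since then $b({\bf p}) \le \epsilon^{1/(d-1)} \to 0$ and both hypotheses of Proposition \ref{prop:b(p)} hold trivially. The main obstacle is the combinatorial step of controlling $\binom{d-i}{j}/\binom{d}{j}$ simultaneously across all $i$, which is what makes the bound $((d-k)/d)^i$ the right one; a smaller technical point is the auxiliary hypothesis $p_0+p_1<1$ in Proposition \ref{prop:b(p)}(ii), which is automatic here because $k \ge 2$ in the cases of interest forces $p_0+p_1 \le \epsilon < 1$ once $\epsilon$ is small.
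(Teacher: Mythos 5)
Your proposal is correct and follows essentially the same route as the paper: both reduce the theorem to Proposition \ref{prop:b(p)} by splitting the sum defining $b({\bf p})$ at $j=k$ and showing that the surviving product $\prod_{\ell=0}^{i-1}\frac{(d-\ell)-k}{d-\ell}$ is at most $\left(\frac{d-k}{d}\right)^i$, so that $b({\bf p})\to \frac{d-k}{d}$ as $p_k+\cdots+p_d\to 1$. Your write-up is in fact slightly tidier on two points the paper glosses over — the indices $i>d-k$ in the maximum and the hypothesis $p_0+p_1<1$ needed for part (ii) — but the underlying argument is the same.
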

 
\begin{proof}
If $p:=p_k+\cdots + p_d$ is sufficiently near to $1$, then we have 
\begin{eqnarray*}
b({\bf p}) 
&=& \max_{i=1,2,\dots,d-k} \left\{ \left(\sum_{j=0} ^{k} p_j \prod_{l=0}^{i-1} \frac{(d-l)-j}{d-l}  + \sum_{j=k} ^{d-i} p_j \prod_{l=0}^{i-1} \frac{(d-l)-j}{d-l} \right)^{\frac{1}{i}}\right\} \\ 
&\le & \max_{i=1,2,\dots,d-k} \left\{ \left( (1-p) + \left( \sum_{j=k} ^{d-i} p_j \right) \max_{j=k,\dots ,d-i} \prod_{l=0}^{i-1} \frac{(d-l)-j}{d-l} \right)^{\frac{1}{i}}\right\} \\ 
&\le & \max_{i=1,2,\dots,d-k} \left\{ \left( (1-p) + p\prod_{l=0}^{i-1} \frac{(d-l)-k}{d-l} \right)^{\frac{1}{i}}\right\} \\ 
&=& \max_{i=1,2,\dots,d-k} \left\{ \left( (1-p) + p\prod_{l=0}^{k-1} \frac{(d-l)-i}{d-l} \right)^{\frac{1}{i}}\right\} . 
\end{eqnarray*}
We get 
\begin{eqnarray*}
\max_{i=1,2,\dots,d-k} \left\{ \left( \prod_{l=0}^{k-1} \frac{(d-l)-i}{d-l} \right)^{\frac{1}{i}}\right\}
&\le& \prod_{l=0}^{k-1} \max_{i=1,2,\dots,d-k} \left\{ \left( 1-\frac{i}{d-l} \right)^{\frac{1}{i}}\right\} \\ 
&=& \prod_{l=0}^{k-1} \left( 1-\frac{1}{d-l} \right) \\ 
&=& \frac{d-k}{d} . 
\end{eqnarray*}
Hence if $d =k $ or $\lambda(G^*)  < \left(\frac{d}{d-k}\right)^2$, then 
$\lambda (G^*) b({\bf p})^2 <1$ when $p$ is sufficiently near to $1$.
Similarly, if $d =k $ or $\lambda(G^*)  < \frac{d}{d-k}$, then $\lambda (G^*) b({\bf p}) <1$ when $p$ is sufficiently near to $1$.
\end{proof}

Using $\lambda(G^*)\le d^*-1$ for the degree $d^*$ of $G^*$, which is the maximum degree among vertices in $G^*$, we obtain 
\begin{corollary}\label{cor:dualedge} \ 
\begin{enumerate}
\item Assume $G$ and $G^*$ satisfies $d=3$ and $d^*\le 9$, or $d=4$ and $d^*\le 4$, or $d=5,6$ and $d^*\le 3$. \\ 
If $p_2 + \cdots + p_d$ is sufficiently near to $1$, then  $\theta(G,{\bf p})>0$. 
\item Assume $G$ and $G^*$ satisfies $d=3$, or $d=4$ and $d^*\le 4$, or $d=5$ and $d^*\le 3$.  \\ 
If $p_3 +\cdots + p_d$ is sufficiently near to $1$, then  $\tilde{\theta}(G,{\bf p})>0$. 
\end{enumerate}
\end{corollary}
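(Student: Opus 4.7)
The plan is to derive this corollary as a direct numerical consequence of the preceding theorem, combined with the universal bound $\lambda(G^*) \le d^* - 1$ that was recorded just before the statement of the connective constant in the introduction. In other words, for each listed pair $(d, d^*)$ I just need to verify that the hypothesis of the theorem, $d = k$ or $\lambda(G^*) < (d/(d-k))^2$ in (i) and $\lambda(G^*) < d/(d-k)$ in (ii), is implied by the stated bound on $d^*$.

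First I would treat part (i), which is the case $k = 2$. For $d = 3$ the relevant threshold is $(d/(d-k))^2 = 9$, and since $\lambda(G^*) \le d^* - 1 \le 8 < 9$, any $d^* \le 9$ suffices. For $d = 4$ the threshold is $(4/2)^2 = 4$, so $d^* \le 4$ gives $\lambda(G^*) \le 3 < 4$. For $d = 5$ the threshold is $(5/3)^2 = 25/9$, so $d^* \le 3$ gives $\lambda(G^*) \le 2 < 25/9$. For $d = 6$ the threshold is $(6/4)^2 = 9/4$, so again $d^* \le 3$ gives $\lambda(G^*) \le 2 < 9/4$. In each case the previous theorem applies and yields $\theta(G, \mathbf{p}) > 0$ whenever $p_2 + \cdots + p_d$ is sufficiently close to $1$.

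Next I would dispatch part (ii), the case $k = 3$. If $d = 3$ the condition $d = k$ is already satisfied, so the theorem applies unconditionally. For $d = 4$ the threshold $d/(d-k) = 4$ gives $d^* \le 4 \Rightarrow \lambda(G^*) \le 3 < 4$. For $d = 5$ the threshold $5/2$ forces $d^* \le 3$, and then $\lambda(G^*) \le 2 < 5/2$. In every listed case the hypothesis of part (ii) of the preceding theorem is met, yielding $\tilde\theta(G, \mathbf{p}) > 0$ whenever $p_3 + \cdots + p_d$ is sufficiently close to $1$.

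There is essentially no obstacle here; the content of the corollary is the observation that, by using only the crude bound $\lambda(G^*) \le d^* - 1$, the abstract hypothesis of the previous theorem can be replaced by a finite arithmetic check depending solely on $(d, d^*)$. The only thing one should be slightly careful about is that the inequalities on $\lambda(G^*)$ are strict, so one must check $d^* - 1 <$ threshold (not $\le$), which is the reason, for example, that $d = 3$ of part (i) allows $d^* = 9$ (giving $\lambda(G^*) \le 8 < 9$) rather than $d^* = 10$.
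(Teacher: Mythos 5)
Your proposal is correct and is exactly the argument the paper intends: the corollary is stated immediately after the remark ``Using $\lambda(G^*)\le d^*-1$ \dots we obtain,'' so the proof is precisely the finite arithmetic check of the theorem's hypotheses with $k=2$ for (i) and $k=3$ for (ii), and your case-by-case verification of the strict inequalities matches it. Nothing further is needed.
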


The following directed planar graphs satisfy the assumption in Corollary \ref{cor:dualedge}: 
\begin{itemize}
\item The square lattice $\mathbb{L}^2$ is the directed graph $(V,E)$ with $V=\mathbb{Z}^2$ and $(x,y)\in E $ if and only if $\|x-y\|=1$, where $\|\cdot \|$ is the Euclidean norm. Note that $(\mathbb{L}^2)^*=\mathbb{L}^2$. 
\item The hexagonal lattice is the directed graph with the vertex set $\{(1,0)n + (\frac{1}{2},\frac{\sqrt{3}}{2})m \mid (n,m) \in  \mathbb{Z}^2\} \backslash \{(2,0)n + (1,\sqrt{3})m \mid (n,m) \in  \mathbb{Z}^2\}$ and the edge set $E$ satisfying $(x,y)\in E $ if and only if $\|x-y\|=1$. 
Its dual graph is isomorphic to the triangular lattice below. 
\item The triangular lattice is the directed graph with the vertex set $\{(1,0)n + (\frac{1}{2},\frac{\sqrt{3}}{2})m \mid (n,m) \in  \mathbb{Z}^2\}$ and the edge set $E$ satisfying $(x,y)\in E $ if and only if $\|x-y\|=1$. 
Its dual graph is isomorphic to the hexagonal lattice. 
This satisfies the assumption in (i), but does not satisfy  the assumption in (ii).  
\item Semi-regular tessellation $4.8.8$, the vertex set is the vertices of tiles, and the edge set is the set of edges of tiles with the directions.  
\item Hyperbolic tilings by $k$-regular polygons with $k \le 9$ of the hyperbolic plane such that the number of tiles at a vertex is three, the vertex set is the vertices of tiles, and the edge set is the set of edges of tiles with the directions.  
\end{itemize} 

Using $\lambda(G^*)=\sqrt{2+\sqrt{2}}$ proved in \cite{MR2912714} for the hexagonal lattice $G^*$, we have 
\begin{corollary}
For the triangular lattice $G$, if $p_3 +\cdots + p_d$ is sufficiently near to $1$, then  $\tilde{\theta}(G,{\bf p})>0$. 
\end{corollary}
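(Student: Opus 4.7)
The plan is to apply part (ii) of the preceding Theorem to the triangular lattice $G$ directly, using the known value of the connective constant of the hexagonal lattice.

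First, I identify the parameters: the triangular lattice is vertex-transitive and $d$-regular with $d=6$, and its dual graph $G^{*}$ is (isomorphic to) the hexagonal lattice. I set $k=3$, so that the hypothesis $d/(d-k) = 6/3 = 2$ of the Theorem's second clause becomes the requirement $\lambda(G^{*}) < 2$.

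Next, I invoke the result of Duminil-Copin and Smirnov \cite{MR2912714} stating that the connective constant of the hexagonal lattice is $\sqrt{2+\sqrt{2}}$. Since $2+\sqrt{2} < 4$, we have
\begin{equation*}
\lambda(G^{*}) \;=\; \sqrt{2+\sqrt{2}} \;<\; 2 \;=\; \frac{d}{d-k},
\end{equation*}
so the strict inequality needed in part (ii) of the Theorem holds with $k=3$.

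It remains to check the standing hypothesis $N(l)\to\infty$ as $l\to\infty$ for this pair $G,G^{*}$. This is immediate: $G^{*}$ is locally finite and embedded in $\mathbb{R}^{2}$ so that the Euclidean diameter of $B(l)\subset G$ grows linearly in $l$, whence any self-avoiding polygon in $G^{*}$ enclosing $B(l)$ must have length tending to infinity with $l$. With all three hypotheses of clause (ii) verified, the Theorem yields $\tilde{\theta}(G,\mathbf{p}) > 0$ whenever $p_{3}+\cdots+p_{d}$ is sufficiently close to $1$, which is the claim. There is essentially no obstacle here beyond invoking the correct numerical input: the whole point is that the sharp value $\sqrt{2+\sqrt{2}}$ for the hexagonal connective constant falls on the right side of the bound $d/(d-k)=2$, allowing the Theorem — which would otherwise only give us the cruder estimate $\lambda(G^{*})\le d^{*}-1 = 2$ via Corollary \ref{cor:dualedge}, insufficient for a strict inequality — to be applied.
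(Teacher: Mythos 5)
Your proposal is correct and follows exactly the paper's route: apply part (ii) of the theorem with $d=6$, $k=3$, noting that the dual of the triangular lattice is the hexagonal lattice and that $\lambda(G^{*})=\sqrt{2+\sqrt{2}}<2=d/(d-k)$ by the Duminil-Copin--Smirnov result, which is precisely the sharp input needed since the crude bound $d^{*}-1=2$ fails to be strict. The paper states this in one line; your write-up just spells out the same verification in more detail.
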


\section{Inequalities and Russo's formula} \label{sec:Ineq}

\subsection{Inequalities}

For our model, there are increasing events which the FKG inequality $P(A \cap B) \ge P(A)P(B) $ is not valid. 
For example, on the infinite directed $2$-regular graph $T_2$, 
we consider the increasing events $A := \{ -1\leftrightharpoons 0\} $ and  $B:= \{ 0 \leftrightharpoons 1 \} $.  
Let ${\bf p} =(0,1-p,p)$. 
Then $P(A)=P(B)=1-\left(\frac{1-p}{2}\right)^2$ and 
\begin{eqnarray*}
P(A \cap B) 
= P(-1 \leftrightharpoons 1) 
= 1- 2\left(\frac{1-p}{2}\right)^2
= 1- \frac{(1-p)^2}{2}. 
\end{eqnarray*}
Hence 
\begin{eqnarray*}
P(A)P(B) -P(A \cap B) 
&=& \left(1-\left(\frac{1-p}{2}\right)^2\right)^2 - \left( 1-\frac{(1-p)^2}{2}\right) \\ 
&=& \left(\frac{1-p}{2}\right)^4
> 0 
\end{eqnarray*}
for $0\le p <1$. 
Also for the increasing events $\tilde{A} := \{ -1\leftrightarrow 0\} $, $\tilde{B}:= \{ 0 \leftrightarrow 1 \} $ and ${\bf p} =(0,1-p,p)$,  
we have 
$P(\tilde{A})=P(\tilde{B})=\left(\frac{1-p}{2}+p\right)^2=\left(\frac{1+p}{2}\right)^2$ and 
\begin{eqnarray*}
P(\tilde{A} \cap \tilde{B}) 
= P(-1 \leftrightarrow 1) 
= p\left(\frac{1-p}{2}+p\right)^2 
=p\left(\frac{1+p}{2}\right)^2. 
\end{eqnarray*}
Hence 
\begin{eqnarray*}
P(\tilde{A})P(\tilde{B}) -P(\tilde{A} \cap \tilde{B}) 
&=& \left(\frac{1+p}{2}\right)^4-p\left(\frac{1+p}{2}\right)^2 \\
&=& \left(\frac{1+p}{2}\right)^2 \left( \frac{1+p^2}{4} \right)
> 0
\end{eqnarray*}
for all $0\le p \le1$. 
That is the FKG inequality is not valid for these increasing events in our model. 

For $K \subset V$ and $\omega \in \Omega$, we denote $$C(K,\omega):= \{\omega' \in \Omega \mid \omega'(x) = \omega(x) \text{ for all } x \in K\}.$$ 
For events $A,B \in \mathcal{F}$ which depends only on the states of the finite vertices $W \subset V$, 
we define $$A \Box B:=\{ \omega \in \Omega \mid \text{ there is }K \subset W \text{ such that } C(K,\omega ) \subset A, C(W \backslash K,\omega) \subset B\}.$$

\begin{theorem}[Reimer's ineqality \cite{MR1751301}]
For any events $A,B$ which depends only on the states of the finite vertices, we have 
\begin{eqnarray*}
P_{\bf p}(A \Box B) \le P_{\bf p}(A) P_{\bf p}(B). 
\end{eqnarray*}
\end{theorem}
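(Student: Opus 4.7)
The plan is to deduce this from the classical Reimer inequality on a Bernoulli cube (the content of \cite{MR1751301}) by encoding the sample space at each vertex into a block of fair coin tosses and lifting the per-vertex disjoint occurrence to the corresponding per-coordinate disjoint occurrence.

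First I would reduce to a finite-dimensional problem. Set $\Omega_W := \prod_{x \in W} 2^{L_x}$. Because $A$ and $B$ depend only on $\omega|_W$, the conditions $C(K,\omega) \subset A$ and $C(W\setminus K, \omega) \subset B$ in the definition of $A \Box B$ are determined by $\omega|_W$, so the event $A \Box B$ itself depends only on $\omega|_W$. Hence it suffices to prove the inequality for the finite product measure $\bigotimes_{x \in W} \mu_x$ on $\Omega_W$.

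Next I would encode this finite product measure into a Bernoulli cube. Approximate ${\bf p}$ by a sequence ${\bf p}_n$ of probability vectors whose components are dyadic rationals with ${\bf p}_n \to {\bf p}$. For each ${\bf p}_n$ the resulting weights of $\mu_x^{(n)}$ on $2^{L_x}$ are dyadic, so one can choose $m_x \in \mathbb{N}$ and partition $\{0,1\}^{m_x}$ into level sets realising these weights, yielding a measure-preserving map $\phi_x : \{0,1\}^{m_x} \to (2^{L_x}, \mu_x^{(n)})$ from the uniform Bernoulli product. Combining these gives a measure-preserving map $\Phi : \{0,1\}^N \to \Omega_W$, with $N = \sum_{x\in W} m_x$, whose coordinates are grouped into one block per vertex $x \in W$.

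Set $A' := \Phi^{-1}(A)$ and $B' := \Phi^{-1}(B)$. The key observation is that a per-vertex witness $K \sqcup (W\setminus K)$ to $\omega|_W \in A \Box B$ lifts to a per-coordinate witness for any point of $\Phi^{-1}(\omega|_W)$ by assigning the coordinate blocks of vertices in $K$ to $A'$ and those of $W \setminus K$ to $B'$; hence $\Phi^{-1}(A \Box B) \subset A' \Box B'$ in the classical per-coordinate sense. Reimer's inequality on the Bernoulli cube then gives
\begin{equation*}
P_{{\bf p}_n}(A \Box B) \le P(A' \Box B') \le P(A')P(B') = P_{{\bf p}_n}(A) P_{{\bf p}_n}(B).
\end{equation*}
Since $P_{\bf q}(A)$, $P_{\bf q}(B)$, and $P_{\bf q}(A \Box B)$ are polynomials in the entries of ${\bf q}$ (as $W$ is finite), letting ${\bf p}_n \to {\bf p}$ recovers the inequality for arbitrary ${\bf p}$. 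The main obstacle is the encoding step: one must show that $\Phi$ really can be arranged so that its coordinate blocks correspond exactly to the vertices in $W$, so that the vertex-level partition $K \sqcup (W\setminus K)$ induces a genuine coordinate-level partition in $\{0,1\}^N$. This is essentially combinatorial bookkeeping, but it is precisely what makes the classical Reimer inequality applicable in our vertex-indexed setting.
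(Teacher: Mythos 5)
The paper does not actually prove this statement: it imports Reimer's theorem wholesale from \cite{MR1751301}, and the standard form of that theorem already applies to an arbitrary product measure on a product of finite sets --- here the factor at each vertex $x\in W$ is the finite set $2^{L_x}$ with the measure $\mu_x$ --- so the vertex-indexed statement is literally an instance of the cited result once one notes, as you do, that $A\Box B$ depends only on $\omega|_W$. Your proposal is therefore a genuinely different (and more self-contained) route: you rederive the finite-alphabet case from the uniform Bernoulli-cube case by block encoding, and your key step --- lifting a vertex-level witness $K\sqcup(W\setminus K)$ to a coordinate-level witness, giving $\Phi^{-1}(A\Box B)\subset A'\Box B'$ --- is correct, since fixing the coordinate block of a vertex in $K$ fixes its state, so the corresponding coordinate cylinder maps into $C(K,\omega)\subset A$. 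One slip to repair: making the components of ${\bf p}$ dyadic does \emph{not} make the atom weights $p_k/\binom{d}{k}$ dyadic (e.g.\ $\binom{3}{1}=3$), so you must apply the dyadic approximation directly to the distribution of $\mu_x$ on the $2^d$ atoms of $2^{L_x}$ rather than to ${\bf p}$ itself; the approximating measures are then general product measures rather than of the form $P_{\bf q}$, but since $P(A)$, $P(B)$ and $P(A\Box B)$ are polynomial (indeed multilinear) in these finitely many atom weights, the limiting argument closes exactly as you describe. What your approach buys is independence from the general-alphabet formulation of Reimer's theorem at the cost of the encoding bookkeeping; what the paper's citation buys is brevity.
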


Note that since $A\cap (\Omega \backslash B) \not \subset A\Box (\Omega \backslash B) $ for increasing events $A,B$ in our model, we can't deduce the FKG inequality from the Reimer's inequality. 

Let $Q:=\{{\bf p}=(p_0,p_1,\dots ,p_d) \mid p_0+p_1+\dots +p_d=1\} \subset \mathbb{R}^{d+1}$. 
For $\omega \in  \Omega $, $x\in V$ and $S \subset L_x$, we define 
\begin{eqnarray*}
\omega^x_S(y):=
\begin{cases}
S & \text{ if } y = x \\
\omega(y) & \text{ otherwise,  }
\end{cases}
\end{eqnarray*}
and $A^x_S:=\{ \omega \in \Omega \mid \omega^x_S \in A\} $ for an event $A$. 
Then the event $A^x_S$ is independent of the state at $x$.  

\begin{theorem}[Russo's formula]
Let $A $ be an event which depends only on the states of the finite vertices $W \subset V$. 
For ${\bf p}\in Q$ and a $C^1$ curve $\gamma = (\gamma_0,\gamma_1, \dots, \gamma_d):(-\epsilon,\epsilon) \to Q$ with parameter $t$ such that $\gamma (0)={\bf p}$, 
we have 
\begin{eqnarray*}
\frac{d}{d t} P_{\gamma(t)}(A) |_{t=0}
= \sum_{x \in W} \sum_{S \subset L_x}  \frac{d\gamma_{|S|}}{d t}(0) \frac{P_{\bf p}(A^x_S)}{\binom{d}{|S|}}  . 
\end{eqnarray*}
\end{theorem}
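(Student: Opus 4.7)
The plan is to reduce Russo's formula to the product rule by writing $P_{\gamma(t)}(A)$ as a finite polynomial expression in the components $\gamma_0(t),\dots,\gamma_d(t)$. Since $A$ depends only on states at vertices in $W$, and since the states $\{\omega(x):x\in V\}$ are independent under the product measure, I would first write
\begin{eqnarray*}
P_{\gamma(t)}(A) = \sum_{(S_x)_{x\in W} \,\in\, \mathcal{A}}\ \prod_{x \in W} \frac{\gamma_{|S_x|}(t)}{\binom{d}{|S_x|}},
\end{eqnarray*}
where $\mathcal{A}\subset \prod_{x\in W}2^{L_x}$ denotes the finite collection of tuples $(S_x)_{x\in W}$ that determine a configuration of $A$. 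Since $W$ is finite, this is a finite sum of finite products of $C^1$ functions.

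Next I would differentiate term-by-term using the Leibniz rule at $t=0$, obtaining a double sum over tuples $(S_x)_{x\in W}\in\mathcal{A}$ and a distinguished vertex $x_0\in W$ to which the derivative is applied. Then I would interchange the order of summation, pulling the sum over $x_0\in W$ and the sum over the value $S_{x_0}\subset L_{x_0}$ to the outside. The remaining inner sum ranges over tuples $(S_x)_{x\in W\setminus\{x_0\}}$ such that the extended tuple (with $S_{x_0}$ inserted at $x_0$) lies in $\mathcal{A}$, weighted by $\prod_{x\in W\setminus\{x_0\}}\gamma_{|S_x|}(0)/\binom{d}{|S_x|}$.

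The key observation then is that this inner sum equals $P_{\bf p}(A^{x_0}_{S_{x_0}})$: by the definition $A^{x_0}_{S_{x_0}}=\{\omega\mid \omega^{x_0}_{S_{x_0}}\in A\}$, the event $A^{x_0}_{S_{x_0}}$ is determined solely by the states at $W\setminus\{x_0\}$, and a configuration lies in it precisely when the tuple of states on $W\setminus\{x_0\}$, together with the forced choice $S_{x_0}$ at $x_0$, belongs to $\mathcal{A}$. Since $\gamma(0)={\bf p}$, the product of weights gives exactly $P_{\bf p}(A^{x_0}_{S_{x_0}})$. Assembling the pieces yields the stated formula.

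The main (very mild) obstacle is purely bookkeeping: keeping track of the indexing between tuples $(S_x)$, the distinguished pair $(x_0,S_{x_0})$, and the events $A^{x_0}_{S_{x_0}}$. One point worth checking, though it requires no extra work, is that the constraint $\gamma_0(t)+\cdots+\gamma_d(t)=1$ forces $\sum_i\gamma_i'(0)=0$, so the right-hand side automatically lies in the tangent space of $Q$; this is consistent with the left-hand side being well-defined as a directional derivative along a curve in the simplex $Q$, and no restriction on $\gamma'(0)$ beyond tangency to $Q$ is needed.
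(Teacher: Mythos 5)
Your proposal is correct and is essentially the paper's argument: both exploit the fact that $P_{\gamma(t)}(A)$ is multilinear in the per-vertex parameters, with the paper packaging the per-vertex derivative through the one-vertex-perturbed measures $P_{\overrightarrow{\gamma}_x(t)}$ and the conditioning identity $P_{\overrightarrow{\gamma}_x(t)}(A)=\sum_{S\subset L_x}\frac{\gamma_{|S|}(t)}{\binom{d}{|S|}}P_{\bf p}(A^x_S)$, while you expand the polynomial fully and apply the Leibniz rule. The bookkeeping you describe goes through exactly as you say, so no gap.
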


\begin{proof}
For $x \in W$ and $t\in (-\epsilon,\epsilon)$, let 
\begin{eqnarray*}
\overrightarrow{\gamma}_x(t):=(\gamma_y(t) =(\gamma_{y,0}(t),\gamma_{y,1}(t),\dots,\gamma_{y,d}(t)) \mid y \in V)
\end{eqnarray*}
such that  $\gamma_x(t) = \gamma(t) $ and $\gamma_y(t) \equiv {\bf p}$ if $y\not=x$. 
We set $$ P_{\overrightarrow{\gamma}_x(t)} = \prod_{y\in V} \mu_y(t) $$
where $\mu_y(t)$ is given by 
\begin{eqnarray*} 
\mu_y(t) (\omega(y) = \emptyset) = \gamma_{y,0}(t), \ \  
\mu_y(t) (\omega(y) = \{y(i_1), \dots, y(i_k) \}) = \frac{\gamma_{y,k}(t)}{\binom{d}{k}}
\end{eqnarray*}
for each $\{y(i_1), \dots, y(i_k) \} \subset L_y$.  
Since $P_{\overrightarrow{\gamma}_x (t)}(A^x_S)$ is independent of the state at $x$, we have 
\begin{eqnarray*}
P_{\overrightarrow{\gamma}_x(t)}(A) 
&=& P_{\overrightarrow{\gamma}_x(t)}\left(\sqcup_{S \subset L_x} (\{ \omega \mid \omega(x) = S\} \cap A^x_S)\right) \\ 
&=& \sum_{S \subset L_x}P_{\overrightarrow{\gamma}_x(t)}(\{ \omega \mid \omega(x) = S\})P_{\overrightarrow{\gamma}_x(t)}(A^x_S) \\ 
&=& \sum_{S \subset L_x} \frac{\gamma_{|S|}(t)}{\binom{d}{|S|}} P_{\bf p}(A^x_S). 
\end{eqnarray*}
Hence 
\begin{eqnarray*}
\frac{d}{d t}P_{\gamma(t)}(A)|_{t=0} 
= \sum_{x \in W} \frac{d}{d t}P_{\overrightarrow{\gamma}_x (t)}(A)|_{t=0} 
= \sum_{x \in W} \sum_{S \subset L_x}  \frac{d\gamma_{|S|}}{d t}(0) \frac{P_{\bf p}(A^x_S)}{\binom{d}{|S|}} .  
\end{eqnarray*}
\end{proof}

\begin{corollary}
Let $A$ be an increasing event which depends only on the states of the finite vertices $W \subset V$. 
Then  we have 
\begin{eqnarray*}
\frac{d}{d p} P_{\bf p}(A) 
= \frac{k!(d-k-1)!}{d!}\sum_{x \in W} \sum_{S \subset L_x, |S|=k} \sum_{y \in L_x \backslash S} P_{\bf p} (A^x_{S \cup \{ y \} }\backslash A^x_{S}) 
\end{eqnarray*}
if $p_k=1-p$ and $p_{k+1} =p$.  
\end{corollary}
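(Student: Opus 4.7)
The plan is to apply Russo's formula proved just above to the one-parameter curve $\gamma:[0,1]\to Q$ defined by $\gamma_k(p)=1-p$, $\gamma_{k+1}(p)=p$, and $\gamma_i(p)=0$ for $i\notin\{k,k+1\}$. Only the $k$-th and $(k+1)$-th components have nonzero derivative: $\dot\gamma_k=-1$ and $\dot\gamma_{k+1}=+1$. Substituting into Russo's formula immediately gives
\begin{equation*}
\frac{d}{dp}P_{\mathbf p}(A)=\sum_{x\in W}\Biggl[-\frac{1}{\binom{d}{k}}\sum_{\substack{S\subset L_x\\|S|=k}}P_{\mathbf p}(A^x_S)\;+\;\frac{1}{\binom{d}{k+1}}\sum_{\substack{S'\subset L_x\\|S'|=k+1}}P_{\mathbf p}(A^x_{S'})\Biggr].
\end{equation*}

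The next step is to reindex the second sum over pairs $(S,y)$ with $|S|=k$ and $y\in L_x\setminus S$: each $(k+1)$-subset $S'$ arises from exactly $k+1$ such pairs via $S'=S\cup\{y\}$, so
\begin{equation*}
\sum_{|S'|=k+1}P_{\mathbf p}(A^x_{S'})=\frac{1}{k+1}\sum_{|S|=k}\sum_{y\in L_x\setminus S}P_{\mathbf p}(A^x_{S\cup\{y\}}).
\end{equation*}
Using the identity $\binom{d}{k+1}=\binom{d}{k}\frac{d-k}{k+1}$, I get $\frac{1}{\binom{d}{k+1}}\cdot\frac{1}{k+1}=\frac{1}{(d-k)\binom{d}{k}}$. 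For the first sum I rewrite $-\frac{1}{\binom{d}{k}}P_{\mathbf p}(A^x_S)=-\frac{1}{(d-k)\binom{d}{k}}\sum_{y\in L_x\setminus S}P_{\mathbf p}(A^x_S)$, inserting the trivial sum over the $d-k$ elements $y\in L_x\setminus S$. Both terms then live under the same combined sum $\sum_x\sum_{|S|=k}\sum_{y\in L_x\setminus S}$ with common prefactor $\frac{1}{(d-k)\binom{d}{k}}$.

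The final step uses the hypothesis that $A$ is increasing: since $S\subset S\cup\{y\}$, for any $\omega\in\Omega$ the configuration $\omega^x_S$ is dominated by $\omega^x_{S\cup\{y\}}$, so $A^x_S\subset A^x_{S\cup\{y\}}$. Therefore the pointwise difference $P_{\mathbf p}(A^x_{S\cup\{y\}})-P_{\mathbf p}(A^x_S)$ equals $P_{\mathbf p}\bigl(A^x_{S\cup\{y\}}\setminus A^x_S\bigr)$. Finally, the constant $\frac{1}{(d-k)\binom{d}{k}}$ simplifies to $\frac{k!(d-k-1)!}{d!}$, producing the stated formula.

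There is no genuine obstacle here; the proof is a routine specialization of Russo's formula. The one place that needs care is the combinatorial bookkeeping — matching the telescoping pairs so that the coefficients of $P_{\mathbf p}(A^x_S)$ and $P_{\mathbf p}(A^x_{S\cup\{y\}})$ combine into a single difference with the correct normalization $\frac{k!(d-k-1)!}{d!}$. The use of monotonicity at the end is what converts this algebraic identity into the set-theoretic form $A^x_{S\cup\{y\}}\setminus A^x_S$, which is the form that is useful in subsequent applications (e.g., pivotal-site bounds).
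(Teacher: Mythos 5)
Your proof is correct and is exactly the intended derivation: the paper states this corollary without proof as an immediate consequence of Russo's formula, and your specialization to the curve $\gamma_k=1-p$, $\gamma_{k+1}=p$, the reindexing of $(k{+}1)$-subsets as pairs $(S,y)$, and the use of monotonicity to write the difference of probabilities as $P_{\bf p}(A^x_{S\cup\{y\}}\setminus A^x_S)$ all check out, including the constant $\frac{1}{(d-k)\binom{d}{k}}=\frac{k!(d-k-1)!}{d!}$.
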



\section{Number of finite and infinite clusters}\label{sec:Number} 

The {\it number of weak cluster and strong cluster per vertex} are defined as 
\begin{eqnarray*}
\kappa(G,{\bf p}) := \mathbb{E}_{\bf p}(|C|^{-1}) \ \ \text{ and }\ \ \tilde{\kappa}(G,{\bf p}) := \mathbb{E}_{\bf p}(|\tilde{C}|^{-1}),
\end{eqnarray*}
where $|C|^{-1}:=0$ if $|C|=\infty$ and $|\tilde{C}|^{-1}:=0$ if $|\tilde{C}|=\infty$. 
A graph $G$ is called {\it amenable} if there exist non-empty finite subsets $F(n)$ in $V$ satisfying 
$\limsup_{n\to \infty} \frac{|\partial F(n)|}{|F(n)|}=0$, where 
$$\partial F(n) = \{ x \in F(n) \mid \text{there is } y \in V\backslash F(n) \text{ such that } x \sim y \}.$$
Then as in \cite{MR1707339}, we obtain  

\begin{theorem}[\cite{MR1707339}]\label{thm:clusterpervertex}
Assume $G$ is amenable. 
Let $K_n$ $( $and $\tilde{K}_n)$ be the number of weak $($ and strong$)$ cluster in the subgraph $(F(n), E_\omega(F(n)))$ of $G_\omega$, where $E_\omega(F(n)):= \{(x,y) \in E_\omega \mid x,y \in F(n)\}$. 
Then 
\begin{eqnarray*}
\frac{K_n}{|F(n)|} \to \kappa(G,{\bf p}) \ \ \text{ and } \ \ \frac{\tilde{K_n}}{|F(n)|} \to \tilde{\kappa}(G,{\bf p}) 
\end{eqnarray*}
as $n\to \infty$, $P$-a.s.\ and in $L^1(P)$. 
\end{theorem}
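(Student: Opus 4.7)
The theorem is the standard ``number of clusters per vertex'' fact, and I would prove it by adapting Grimmett's argument for bond percolation in \cite{MR1707339}. The plan has three ingredients: a counting identity, a pointwise ergodic theorem along the Følner sequence $\{F(n)\}$, and a boundary-error estimate based on amenability.

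First I would record $K_n = \sum_{x \in F(n)} |C_n(x)|^{-1}$ (and the analogous identity for $\tilde{K}_n$), valid because each restricted cluster of size $k$ contributes $k \cdot k^{-1} = 1$. Since $G$ is vertex-transitive and $P_{\bf p}$ is a product measure, the automorphism group acts on $(\Omega, \mathcal{F})$ in a measure-preserving, ergodic way; the F\o lner condition $|\partial F(n)|/|F(n)| \to 0$ then lets me apply a pointwise ergodic theorem (Lindenstrauss, or a direct tiling argument along $\{F(n)\}$) to the bounded function $x \mapsto |C(x)|^{-1}$, with the convention $1/\infty := 0$, to obtain
\begin{equation*}
\frac{1}{|F(n)|} \sum_{x \in F(n)} |C(x)|^{-1} \to \mathbb{E}_{\bf p}(|C(o)|^{-1}) = \kappa(G,{\bf p})
\end{equation*}
$P_{\bf p}$-almost surely and in $L^1$.

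It then remains to show that replacing $|C(x)|^{-1}$ by $|C_n(x)|^{-1}$ does not shift the limit. The two summands agree whenever $C(x) \subset F(n)$, so the discrepancy is supported on $\{x \in F(n) : |C(x)| < \infty,\ C(x) \cap \partial F(n) \neq \emptyset\}$ and on $\{x \in F(n) : |C(x)| = \infty\}$. For the former I would truncate at a scale $R$: vertices within graph-distance $R$ of $\partial F(n)$ number at most $|B(o,R)| \cdot |\partial F(n)|$ (which divided by $|F(n)|$ vanishes by amenability with $R$ fixed), while the vertices with $\mathrm{diam}\, C(x) > R$ have density tending, by the ergodic theorem, to $P_{\bf p}(\mathrm{diam}\, C(o) > R,\ |C(o)| < \infty)$, which in turn goes to $0$ as $R \to \infty$. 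For the infinite-cluster contribution, grouping summands by their $C_n$-cluster shows this part of $K_n$ equals the number of $C_n$-clusters contained in an infinite $G_\omega$-cluster; a closed-under-neighbors argument forces every such cluster to meet $\partial F(n)$ (otherwise it would inherit all $G_\omega$-edges at its vertices and hence equal the whole infinite cluster, contradicting its finiteness), so the count is at most $|\partial F(n)| = o(|F(n)|)$. The $L^1$ convergence then follows from the uniform bound $K_n/|F(n)| \in [0,1]$ by dominated convergence, and the strong-cluster statement is proved identically.

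I expect the main obstacle to be the infinite-cluster contribution: finite clusters are absorbed by the standard truncation-and-ergodic-theorem argument, but infinite clusters require the closed-under-neighbors observation to relate the $C_n$-fragments to $\partial F(n)$. A secondary point is that when $\chi(G,{\bf p}) = \infty$ the truncation in $R$ is genuinely necessary, so a naive first-moment bound on $\sum_{z \in \partial F(n)} |C(z) \cap F(n)|$ would not suffice.
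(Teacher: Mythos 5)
Your proposal is correct and follows essentially the same route as the paper's (commented-out) proof: the identity $K_n=\sum_{x\in F(n)}|C_n(x)|^{-1}$, an ergodic/SLLN limit for $\frac{1}{|F(n)|}\sum_{x\in F(n)}|C(x)|^{-1}$, and an $o(|F(n)|)$ bound on the discrepancy via amenability. The only difference is that your truncation-at-scale-$R$ step for finite boundary-touching clusters is unnecessary: the same counting you use for the infinite-cluster part (each restricted cluster on which $\Gamma_n\neq\Gamma$ must meet $\partial F(n)$, so $\sum_{x}(\Gamma_n(x)-\Gamma(x))\le\sum \Gamma_n(x)\le|\partial F(n)|$ over such clusters) already covers the finite case, which is exactly how the paper disposes of the whole error term in one line.
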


For a graph automorphism $f$ and a configuration $\omega$, we define the configuration $f^*\omega \in \Omega $ as $f^*\omega(x):= \omega(f(x))$ for all $x \in V$. 
An event $A$ is called {\it translation invariant} if $f^* A:=\{f^*\omega \mid \omega \in \Omega\} = A$ for any graph automorphism $f$. 
For example, the event $\{\text{there is an infinite cluster}\}$ is translation invariant, but the event $\{ |C(x)|=\infty \}$ is not translation invariant. 
By the Kolmogorov's extension theorem, the probability measure $P_{\bf p}$ is {\it translation invariant}, that is, $P_{\bf p}(f^*A)=P_{\bf p}(A)$ for any event $A$. 
Hence $P_{\bf p}$ is {\it ergodic}, that is,  $P_{\bf p}(A)=0$ or $P_{\bf p}(A)=1$ for any translation invariant event $A$.

\begin{theorem} \label{thm:Numcluster} \ 
\begin{enumerate}
\item Let $N$ be the number of weak infinite cluster. 
If $\theta(G,{\bf p})>0$, then 
$P_{\bf p}(N=1) = 1$ or $P_{\bf p}(N=\infty ) = 1$. 
\item Let $\tilde{N}$ be the number of strong infinite cluster. 
If $\tilde{\theta}(G,{\bf p})>0$, then 
$P_{\bf p}(\tilde{N}=1) = 1$ or $P_{\bf p}(\tilde{N}=\infty ) = 1$. 
\end{enumerate}
\end{theorem}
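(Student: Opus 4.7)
The plan is to run the Newman--Schulman argument in our product-measure setting; since (i) and (ii) are structurally identical (replace $\leftrightharpoons$ by $\leftrightarrow$ throughout), I will describe only (i). For each $k\in\{1,2,\dots\}\cup\{\infty\}$, the event $\{N\ge k\}$ is preserved by every graph automorphism of $G$ and is thus translation-invariant; by the ergodicity of $P_{\bf p}$ noted in the preceding subsection, $P_{\bf p}(N\ge k)\in\{0,1\}$. Therefore $N$ is almost surely equal to a deterministic constant $k_0\in\{0,1,\dots,\infty\}$, and the hypothesis $\theta(G,{\bf p})>0$ gives $P_{\bf p}(N\ge 1)\ge\theta(G,{\bf p})>0$, forcing $k_0\ge 1$. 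The remaining task is to rule out $2\le k_0<\infty$.

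Suppose $2\le k_0<\infty$. Since there are almost surely $k_0$ infinite clusters and each is infinite, the increasing event $A_n:=\{\text{every infinite weak cluster meets } B(n)\}$ satisfies $P_{\bf p}(A_n)\to 1$; I would fix $m$ with $P_{\bf p}(A_m)>1/2$ and a much larger radius $n$, and set $W:=B(n)$. On $A_m$ the fixed ordering on $V$ designates $k_0$ representatives $u_1(\omega),\dots,u_{k_0}(\omega)\in B(m)$, one per infinite cluster. Because $\theta(G,{\bf p})>0$ rules out $p_0=1$, there is $k^*\ge 1$ with $p_{k^*}>0$. For each of the finitely many tuples $\mathbf{u}\in B(m)^{k_0}$, I would fix a deterministic configuration $\omega^{\mathbf{u}}_W\in\prod_{w\in W}2^{L_w}$ that uses only states whose cardinality $j$ satisfies $p_j>0$ and whose induced weak subgraph on $W$ links every entry of $\mathbf{u}$. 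Such configurations exist by routing $k_0-1$ internal paths inside $B(n)\setminus B(m)$ and placing a state of size $k^*$ at every vertex along them pointing to the next vertex on its path.

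The product structure of $P_{\bf p}$ plays the role of the finite-energy property: conditionally on $\omega|_{V\setminus W}$, the restriction $\omega|_W$ is distributed as $\prod_{w\in W}\mu_w$, and the event $B:=A_m\cap\{\omega|_W=\omega^{\mathbf{u}(\omega)}_W\}$ has probability at least $c\cdot P_{\bf p}(A_m)>0$, where $c:=\min_{\mathbf{u}\in B(m)^{k_0}}P_{\bf p}(\omega|_W=\omega^{\mathbf{u}}_W)>0$. On $B$ the $k_0$ formerly distinct infinite clusters are all weakly connected to each other through $W$, while the infinite portion of each cluster inside $V\setminus W$ is preserved because $\omega|_{V\setminus W}$ is unchanged; thus $N\le k_0-1$ holds with positive probability, contradicting $N=k_0$ a.s. The step I expect to require the most care is preserving the infiniteness of the merged cluster: although $\omega|_{V\setminus W}$ is untouched, the original link from each $u_i$ to its infinite tail in $V\setminus W$ could run through a boundary edge $(x,y)$ with $x\in W$ realized by $y\in\omega(x)$, and modifying $\omega(x)$ might destroy it. This is handled by enlarging the construction of $\omega^{\mathbf{u}}_W$ to include, at each boundary vertex $x\in W\cap\partial W$, a state of size in $\{j:p_j>0\}$ retaining a sufficient outward neighbor, or equivalently by restricting further to the positive-probability subevent on which the original $\omega|_{V\setminus W}$ already provides, for every $C_i$, a preserved boundary edge realized as $x\in\omega(y)$ with $y\in V\setminus W$. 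Either way only finitely many additional constraints are imposed, so the modified event still has positive probability and the contradiction goes through; the same argument with $\leftrightarrow$ in place of $\leftrightharpoons$ proves (ii).
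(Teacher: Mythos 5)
Your overall strategy is the same as the paper's: ergodicity of the product measure $P_{\bf p}$ forces $N$ to be an a.s.\ constant $k_0$, the hypothesis gives $k_0\ge 1$, and the case $2\le k_0<\infty$ is excluded by a finite-energy (local modification) argument on a large ball. Where you differ is in how the modification is carried out. The paper does not prescribe a full configuration on $B(n)$; it introduces the maximum and minimum numbers $N_{B(n)}(0)$, $N_{B(n)}(1)$ of infinite clusters over all \emph{admissible} changes of states inside $B(n)$, observes both must a.s.\ equal $k_0$, and then argues by dichotomy along a single path from one infinite cluster to another: either some change of the state of an endpoint already alters the count (done), or no such change does, in which case redirecting the states along the path merges the two clusters without collateral damage. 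This buys exactly the robustness your direct construction lacks.

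The genuine gap in your version is the step ``on $B$ the $k_0$ formerly distinct infinite clusters are all weakly connected \dots thus $N\le k_0-1$.'' Overwriting the entire configuration on $W=B(n)$ with a deterministic $\omega^{\mathbf u}_W$ removes edges as well as adds them, and this can \emph{increase} the cluster count at the same time as your new paths decrease it. You correctly flag the danger of severing the link from a representative $u_i$ to its infinite tail, but there is a second failure mode you do not address: a single infinite cluster may enter and leave $W$ several times, so that $C_i\cap(V\setminus W)$ has two or more infinite components joined only through $W$. Your prescribed configuration connects the designated tails to each other but may disconnect these other infinite components from everything, turning them into new infinite clusters; the net count on the modified event is then not controlled, and the contradiction with $N=k_0$ a.s.\ does not follow as written. (In ordinary site/bond percolation one escapes this by opening everything in $B(n)$, which only adds connections; here that requires $p_d>0$, which is not assumed.) The fix is either to route your deterministic configuration so that \emph{every} infinite component of $G_\omega|_{V\setminus W}$ attached to $W$ is wired into the single merged cluster --- which makes $\omega^{\mathbf u}_W$ depend on finitely much more data from $\omega|_{V\setminus W}$ but still gives a uniformly positive conditional probability --- or to abandon the global overwrite in favour of the paper's max/min dichotomy, which localises the modification to one path whose interior vertices lie in no infinite cluster and therefore cannot sever anything. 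A smaller point of the same kind: your bound $P_{\bf p}(B)\ge c\,P_{\bf p}(A_m)$ should be phrased as a lower bound on the probability of the \emph{image} of $A_m$ under the modification map (as in the paper's proof of Theorem \ref{thm:weakunique}), since conditioning $\omega|_W$ to equal $\omega^{\mathbf u(\omega)}_W$ changes the very data $\mathbf u(\omega)$ used to define it.
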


\begin{proof}
The proof is based on \cite{MR1707339}. 
By Theorem \ref{thm:near0}, we have $p_2+\cdots+p_d>0$. 
Since $N$ is a translation invariant function on $\Omega$, 
there exists $k\in \{0,1,2,\dots\}\cup\{\infty\}$ such that $P_{\bf p}(N=k)=1$. 
Since $\theta(G,{\bf p})>0$, $P_{\bf p}(N=0) \not=1$, that is, $k\in \{1,2,\dots\}\cup\{\infty\}$. 
Set $N_{B(n)}(0)$ and $N_{B(n)}(1)$ to be the maximum and the minimum number of the infinite clusters under admissible changing of the states of vertices in $B(n)$ respectively, where admissible means that we don't consider the configuration $\omega $ containing the state $\omega(x)$ with $|\omega(x)|=k$ if $p_k=0$. 
Since every admissible configuration on $B(n)$ has a strictly positive probability, we have  
\begin{eqnarray*}
P_{\bf p}(N_{B(n)}(0)=N_{B(n)}(1)=k) = 1. 
\end{eqnarray*}
Suppose $P_{\bf p}(N=k)=1$ for some $k \in \{2,3,4,\dots \}$. 
Let $M_{B(n)}$ be the number of the infinite clusters intersecting $B(n)$. 
Then $M_{B(n)}$ is non-decreasing in $n$, and $M_{B(n)} \to N$ as $n\to \infty$. 
Since $k\ge 2$, there is $n\in \mathbb{N}$ such that $P_{\bf p}(M_{B(n)}\ge 2) > 0$. 
If $M_{B(n)}\ge 2$, then there is a path in $B(n)$ from $x$ in an infinite cluster to $y$ in another infinite cluster such that the intermediate vertices are not in infinite clusters. 
If a change of the state of $x$ or of $y$ changes the number $M_{B(n)}$, then $N_{B(n)}(0)>N_{B(n)}(1)$. 
If any change of the state of $x$ and of $y$ does not change the number $M_{B(n)}$, since $p_2+\cdots+p_d>0$, we can connect the infinite clusters containing $x$ and $y$ by changing the states of $x$, $y$ and the intermediate vertices in the path. 
Hence $N_{B(n)}(0)>N_{B(n)}(1)$. 
Thus we have 
\begin{eqnarray*}
P_{\bf p}(N_{B(n)}(0)>N_{B(n)}(1)) > 0  .  
\end{eqnarray*}
This is a contradiction. 
The proof for $\tilde{N}$ is same. 
\end{proof}

\begin{theorem}
\label{thm:weakunique}
Assume $G$ is amenable. 
Let $N$ be the number of infinite weak cluster. 
If $\theta(G,{\bf p})>0$, and $p_0+p_1>0, p_2>0$, then 
\begin{eqnarray*}
P_{\bf p}(N=1) = 1. 
\end{eqnarray*}
\end{theorem}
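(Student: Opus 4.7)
The plan is to adapt the classical Burton--Keane uniqueness argument, as developed in \cite{MR1707339}, to our directed-graph setting. By Theorem~\ref{thm:Numcluster} it suffices to rule out $P_{\bf p}(N=\infty)=1$, so I assume this equality for contradiction. Call a vertex $x$ a \emph{trifurcation} if $x$ lies in an infinite weak cluster and the removal of $x$ (together with every weak edge incident to $x$) splits the weak cluster of $x$ into at least three infinite pieces, each weakly adjacent to $x$ in $G_\omega$. The whole argument hinges on showing $\xi := P_{\bf p}(o \text{ is a trifurcation}) > 0$ and then reaching a contradiction with amenability.

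To establish $\xi > 0$, I would use a finite-energy modification. Since $N=\infty$ almost surely, there exists $n$ for which the event $A_n$ that $B(n)$ meets at least three distinct infinite weak clusters has positive probability. On $A_n$, choose three vertex-disjoint self-avoiding paths inside $B(n)$ from $o$ to vertices on $\partial B(n)$ lying in three different infinite clusters. I would then overwrite $\omega|_{B(n)}$ so that each vertex of the three paths is assigned a two-element neighbor-subset producing the weak edges along the path (using $p_2 > 0$), while every remaining vertex of $B(n)$ is assigned a state of cardinality at most one chosen to prevent any additional weak edge between distinct paths or from $o$ to an unintended neighbor (using $p_0 + p_1 > 0$). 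Because $P_{\bf p}$ is a product measure, this prescribed configuration on the finite set $B(n)$ has strictly positive probability uniformly in the configuration outside $B(n)$, so $\xi \ge c\, P_{\bf p}(A_n) > 0$.

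Next I would invoke the classical topological bound: for any finite $F \subset V$, the number $T(F)$ of trifurcations contained in $F$ is at most a constant multiple of $|\partial F|$, via the standard ternary-forest encoding in which each trifurcation branches three vertex-disjoint infinite weak paths that must exit $F$ through its boundary, forcing distinct trifurcations to be separated. Applying a spatial ergodic theorem along the F\"olner sequence $F(n)$ supplied by amenability, arguing as in the proof of Theorem~\ref{thm:clusterpervertex}, yields $T(F(n))/|F(n)| \to \xi$ almost surely, while $T(F(n))/|F(n)| \le C\,|\partial F(n)|/|F(n)| \to 0$; this contradicts $\xi > 0$.

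The principal difficulty is the finite-energy step. In ordinary site or bond percolation, modifying a single vertex's or edge's state is independent of its neighbors and produces the desired local connectivity immediately; here, by contrast, the presence of a weak edge between $x$ and $y$ depends jointly on $\omega(x)$ and $\omega(y)$, so the modification must prescribe compatible states at every vertex of $B(n)$ and its neighborhood. The two hypotheses play complementary roles: $p_2 > 0$ supplies the two-element states needed to create the three weak paths out of the putative trifurcation, while $p_0+p_1>0$ supplies the low-cardinality states at ambient vertices that would otherwise generate undesired weak edges collapsing two of the paths together. Carrying out this bookkeeping carefully is the main work of Step~1.
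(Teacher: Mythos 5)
Your overall strategy is exactly the paper's: this is the Burton--Keane trifurcation argument, with Theorem \ref{thm:Numcluster} reducing the claim to excluding $N=\infty$, a finite-energy modification producing trifurcations with positive probability, and a boundary-counting bound along a F{\o}lner sequence. One small simplification is available: you do not need a spatial ergodic theorem for the contradiction, since translation invariance gives $\mathbb{E}_{\bf p}\left[\sum_{x\in F(n)} I_{T_x}\right] = |F(n)|\,P_{\bf p}(T_o)$ and the deterministic bound $\sum_{x\in F(n)} I_{T_x}\le |\partial' F(n)|$ already forces $P_{\bf p}(T_o)=0$; this is how the paper argues.

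The gap is in the finite-energy step, which you correctly identify as the main work but do not carry out, and your sketch of it has two concrete problems. First, three vertex-disjoint self-avoiding paths from $o$ to three prescribed boundary vertices need not exist inside $B(n)$; the paper instead takes a path $\gamma$ from $x$ to $y$ through $F(n)$ and a path $\gamma'$ from $z$ to a vertex $v$ on $\gamma$, makes the branch vertex $v$ (not $o$) the trifurcation, and then uses translation invariance $P_{\bf p}(T_v)=P_{\bf p}(T_o)$. Second, and more seriously, you must work with the event that $x,y,z$ are joined to three disjoint infinite clusters inside $(G_\omega\setminus(F(n)\cup\partial'F(n)))\cup\{x,y,z\}$ (the paper's $L_{F(n)}(x,y,z)$): otherwise the overwrite on $B(n)$ may destroy the very clusters you are branching into. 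Moreover, a weak edge at a vertex $w$ of the modified region can be created by the state of a neighbor \emph{outside} that region (some $u$ with $w\in\omega(u)$), which you cannot overwrite; so ``choosing a state of cardinality at most one to prevent any additional weak edge'' is not within your power when $p_0=0$ and every ambient vertex is forced to emit an edge. This is precisely where the paper's case analysis in the definition of $\omega'$ does its work, routing the forced outgoing edge of each ambient vertex either along a spanning tree into $F(n)$ or back toward an outside vertex that already points at it. Until that bookkeeping is done, the inclusion $J(n)\subset L'(n,x,y,z)\cap T_v$, and hence $P_{\bf p}(T_v)>0$, is not established.
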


\begin{proof}
The proof is based on \cite{MR990777}. 
Since $G$ is amenable, there exist non-empty finite subsets $F(n)$ in $V$ satisfying $\limsup_{n\to \infty} \frac{|\partial F(n)|}{|F(n)|}=0$. 
We may choose $F(n)$ to be connected, that is, the subgraph $(F(n),E(F(n)))$ such that $E(F(n)):=\{(x,y)\in E \mid x,y\in F(n)\}$ is connected with respect to $\sim$. 
Let $\partial'F(n):= \{ y \in V \backslash F(n) \mid \text{there is } x \in F(n) \text{ such that } y \sim x \}$. 
Note that  $|F(n)| \to \infty$ as $n\to \infty$.

We say that a vertex $x$ in $V$ is a {\it trifurcation} if 
(1) $x$ is in an infinite weak cluster; 
(2) the deletion of $x$ and the incident edges splits this infinite weak cluster into exactly three disjoint infinite weak clusters. 
Let $T_x:= \{\omega \in \Omega \mid x \text{ is a trifurcation}\}$. 

Since $P_{\bf p}(T_x) = P_{\bf p}(T_y)$ for any $x,y \in V$, we have 
\begin{eqnarray*}
\mathbb{E}_{\bf p}\left(\sum_{x \in F(n)} I_{T_x}\right) 
=\sum_{x \in F(n)} \mathbb{E}_{\bf p}(I_{T_x}) 
= \sum_{x \in F(n)}P_{\bf p}(T_x) 
= |F(n)| P_{\bf p}(T_o).  
\end{eqnarray*}
On the other hand, as the case of the hypercube lattice, we have 
\begin{lemma}[\cite{MR1707339}]
\begin{eqnarray*}
|\{x \in K \cap F(n) \mid x \text{ is a trifurcation} \}| \le |K \cap \partial' F(n)|   
\end{eqnarray*}
for any weak cluster $K$. 
\end{lemma}
Hence 
\begin{eqnarray*}
\sum_{x \in F(n)}I_{T_x} 
= \sum_{K} \sum_{x \in F(n)\cap K}I_{T_x} 
\le \sum_{K} |K \cap \partial' F(n)|
= |\partial' F(n)| 
\end{eqnarray*}
where $K$ runs over all weak clusters in $G_\omega$. 
Hence for any $x \in V$
\begin{eqnarray*}
P_{\bf p}(T_x) 
\le \frac{|\partial' F(n)|}{|F(n)|} 
\le \frac{d|\partial F(n)|}{|F(n)|}
\to 0 \ \ \text{ as } n\to \infty, 
\end{eqnarray*}
that is, $P_{\bf p}(T_x) =0$. 

By Theorem \ref{thm:Numcluster}, we have $P_{\bf p}(N=1) = 1$ or $P_{\bf p}(N=\infty) = 1$.  
Suppose $P_{\bf p}(N=\infty) = 1$. 
Let $M_{F(n)}$ be the number of the infinite weak clusters intersecting $F(n)$, and $M_{F(n)}(0)$ the maximum number of the infinite weak clusters intersecting $F(n)$ under admissible changing of the states of vertices in $F(n)$. 
Since 
\begin{eqnarray*}
P_{\bf p}(M_{F(n)}\ge 3) \to P_{\bf p}(N \ge 3)=1 \ \text{ as } n \to \infty 
\end{eqnarray*}
and $M_{F(n)}(0) \ge M_{F(n)}$, there exists $n\in \mathbb{N}$ such that 
\begin{eqnarray*}
P_{\bf p}(M_{F(n)}(0)\ge 3) > 0.   
\end{eqnarray*}
For $x,y,z \in \partial' F(n)$ let $L_{F(n)}(x,y,z)$ be the event that $x,y,z$ are weakly connected to disjoint infinite weak clusters in $(G_\omega \backslash (F(n) \cup \partial' F(n)))\cup \{x,y,z \}$ respectively. 
Then we have 
\begin{eqnarray*}
P_{\bf p}(M_{F(n)}(0)\ge 3) 
&=& P_{\bf p}\left( \left( \cup_{x,y,z \in \partial' F(n) } L_{F(n)}(x,y,z) \right) \cap \{M_{F(n)}(0)\ge 3\} \right) \\ 
&\le & \sum_{x,y,z \in \partial' F(n) } P_{\bf p}\left( L_{F(n)}(x,y,z)  \cap \{M_{F(n)}(0)\ge 3\} \right) . 
\end{eqnarray*}
Thus there is $x,y,z \in \partial' F(n) $ such that 
$$P_{\bf p}\left( L_{F(n)}(x,y,z) \cap \{M_{F(n)}(0)\ge 3\} \right) >0.$$

Since $F(n)$ is connected, there is a self avoiding path $\gamma $ from $x$ to $y$ whose intermediate vertices are in $F(n)$, and there is a self avoiding path $\gamma '$ from $z$ to a vertex $v$ in $\gamma \cap F(n)$ whose intermediate vertices are also in $F(n)$.  
Then there exists a spanning tree $T$ of $F'(n):=F(n) \cup \partial' F(n)$ containing $\gamma $ and $\gamma '$ such that $(u,w) \in T$ if and only if $(w,u) \in T$, and if $(u,w) \in T$ and $w\in \partial' F(n)$ then $u\in F(n)$. 
Let $\partial'' F(n) := \{ u \in V \backslash F'(n) \mid \text{there is } w \in F'(n) \text{ such that } u \sim w \}$,  
and $L'(n,x,y,z) := L_{F(n)}(x,y,z) \cap \{M_{F(n)}(0) \ge 3\}$. 

For each configuration $\omega \in L'(n,x,y,z)$, we change the states on $F'(n)$ such that $v$ is a trifurcation as follows:  
Let $\omega'$ be the configuration such that 
\begin{enumerate}
\item for $w \in V \backslash F'(n)$ and $w\in \{x,y,z\}$, $\omega'(w):=\omega(w)$;  
\item for $w $ on $\gamma $ and $\gamma' $ except $v,x,y,z$,  $\omega'(w) :=\{w(i),w(j)\}$ such that $(w,w(i)), (w,w(j))\in T$;
\item $\omega'(v) :=\{v(i),v(j)\}$ where $i,j$ are the first and second smallest numbers such that $v(i)$, $v(j)$ are vertices on $\gamma $ or $\gamma' $;
\item for $w \in F'(n)$ except vertices on $\gamma $ and $\gamma' $,
\begin{enumerate}
\item if $p_0>0$, $\omega(w)=\emptyset$;   
\item if $p_0=0$,   
\begin{enumerate}
\item if there is no $w(j) \in \partial'' F(n)$ with $w \in \omega(w(j)) $,  
then $\omega'(w) :=\{w(i)\}$ such that $i$ is the minimum number with $(w,w(i)) \in T$ and $w(i) \in F(n)$;
\item if there is $w(j) \in \partial'' F(n)$ with $w \in \omega(w(j)) $,  
then $\omega'(w) :=\{w(i)\}$, where $i$ is the minimum number with $w(i) \in \partial'' F(n)$ and $w \in \omega(w(i)) $. 
\end{enumerate}
\end{enumerate}
\end{enumerate}
Then $v$ is a trifurcation. 
We define $J(n):=\{ \omega' \mid \omega \in L'(n,x,y,z)\}$,  
then  $ J(n) \subset L'(n,x,y,z)\cap T_v$.   
Hence we have 
\begin{eqnarray*}
P_{\bf p}(T_v) 
& \ge & P_{\bf p}(J(n)) \\ 
& \ge & P_{\bf p}(J(n) \mid L'(n,x,y,z)) P_p(L'(n,x,y,z)) \\
& \ge & \min \left\{ \max\left\{ p_0, \frac{p_1}{d} \right\} ,\frac{2}{d(d-1)}p_2 \right\}^{|F'(n)|} 
P_{\bf p}(L'(n,x,y,z)) \\
& > & 0. 
\end{eqnarray*}
This is a contradiction. 
\end{proof}

\begin{theorem}
\label{thm:strongunique}
Assume $G$ is amenable. 
Let $\tilde{N}$ be the number of infinite strong cluster. 
If $\tilde{\theta}(G,{\bf p})>0$, and $p_0+p_1>0$, $p_2>0$, $p_3+\cdots + p_d>0$,then 
\begin{eqnarray*}
P_{\bf p}(\tilde{N}=1) = 1. 
\end{eqnarray*}
\end{theorem}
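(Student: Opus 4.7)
The plan is to mimic the Burton--Keane argument of Theorem \ref{thm:weakunique}, replacing ``weak'' by ``strong'' throughout and replacing the surgery step by a more delicate one that forces every edge on the paths $\gamma, \gamma'$ to be bidirectional. Define a \emph{strong trifurcation} at $x$ to be the event $\tilde T_x$ that $x$ lies in an infinite strong cluster and the removal of $x$ and its incident edges splits that cluster into exactly three disjoint infinite strong clusters. The deletion-of-leaves argument is purely graph-theoretic, so the bound $|\{x \in K \cap F(n) \mid x \text{ is a strong trifurcation}\}| \le |K \cap \partial' F(n)|$ carries over to every strong cluster $K$. Summing over strong clusters and invoking translation invariance together with amenability gives $P_{\bf p}(\tilde T_o) \le d|\partial F(n)|/|F(n)| \to 0$, hence $P_{\bf p}(\tilde T_x) = 0$ for every $x \in V$.

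By Theorem \ref{thm:Numcluster} we have $\tilde N \in \{1, \infty\}$ a.s. Assume for contradiction $P_{\bf p}(\tilde N = \infty) = 1$. Exactly as in Theorem \ref{thm:weakunique}, for large enough $n$ one finds $x, y, z \in \partial' F(n)$ and an event $\tilde L'(n, x, y, z)$ of positive probability on which $x, y, z$ are strongly connected to three disjoint infinite strong clusters in $(G_\omega \backslash (F(n) \cup \partial' F(n))) \cup \{x, y, z\}$. Fix a spanning tree $T$ of $F'(n) := F(n) \cup \partial' F(n)$ containing self-avoiding paths $\gamma$ from $x$ to $y$ and $\gamma'$ from $z$ to some $v \in \gamma$, with interior in $F(n)$ and satisfying $(u, w) \in T$ iff $(w, u) \in T$.

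The key step is a surgery $\omega \mapsto \omega'$ on $\tilde L'(n,x,y,z)$ that turns $v$ into a strong trifurcation while keeping $\omega'(w) = \omega(w)$ for $w \notin F'(n)$. Fix $\ell \ge 3$ with $p_\ell > 0$, available by $p_3 + \cdots + p_d > 0$. Define $\omega'$ by: at each interior vertex $w$ of $\gamma \cup \gamma' \backslash \{v\}$ let $\omega'(w)$ be the two tree-neighbors of $w$ in $\gamma \cup \gamma'$ (uses $p_2 > 0$); at $v$ let $\omega'(v)$ consist of the three tree-neighbors of $v$ on $\gamma \cup \gamma'$ together with $\ell - 3$ further neighbors chosen by the ordering on $L_v$ (uses $p_\ell > 0$); at $x, y, z$ include the tree-neighbor on the path so that the boundary edge becomes bidirectional; at every remaining vertex of $F'(n)$ set $\omega'(w) = \emptyset$ if $p_0 > 0$, and otherwise (using $p_1 > 0$) set $\omega'(w) = \{w(i)\}$ for the least $i$ with $w(i) \not\in \gamma \cup \gamma'$. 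These choices make $\gamma, \gamma'$ strongly connected chains, so $v$ is strongly connected to $x, y, z$ and thence to three disjoint infinite strong clusters whose mutual disjointness after removing $v$ is preserved because no off-path vertex receives a bidirectional edge into the paths. Bounding each prescribed local state by a positive constant and using independence yields
\begin{equation*}
P_{\bf p}(\tilde T_v) \ge \min\left\{ \max\left\{p_0, \frac{p_1}{d}\right\}, \frac{2 p_2}{d(d-1)}, \frac{\ell!(d-\ell)! p_\ell}{d!} \right\}^{|F'(n)|} P_{\bf p}(\tilde L'(n, x, y, z)) > 0,
\end{equation*}
contradicting $P_{\bf p}(\tilde T_v) = 0$.

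The main obstacle is the off-path surgery when $p_0 = 0$: each such vertex needs a nonempty state, yet an arbitrary assignment could create a bidirectional edge fusing two of the three intended infinite strong clusters outside $F'(n)$ and thereby reduce $\tilde N$ rather than producing a trifurcation. The hypothesis $p_0 + p_1 > 0$ guarantees $p_1 > 0$ in this case, and steering each off-path vertex's single outgoing edge away from $\gamma \cup \gamma'$ keeps the exterior cluster structure intact; carefully formalizing and globally verifying this choice is the technical heart of the argument. The remaining hypotheses are forced by bidirectionality: $p_2 > 0$ gives interior path vertices a valid two-neighbor state, and $p_3 + \cdots + p_d > 0$ is needed so that $v$ admits three strongly connected neighbors.
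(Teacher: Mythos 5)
Your overall strategy is exactly the paper's: the Burton--Keane trifurcation count (translation invariance plus amenability forces $P_{\bf p}(\tilde T_x)=0$), Theorem \ref{thm:Numcluster} to reduce to ruling out $\tilde N=\infty$, and a local surgery on $F'(n)$ that manufactures a strong trifurcation at $v$ with probability bounded below by $c^{|F'(n)|}P_{\bf p}(\tilde L'(n,x,y,z))$. Your handling of the interior path vertices, of $v$ (using $p_\ell>0$ for some $\ell\ge 3$), and of the off-path vertices (single outgoing edge steered so that no unwanted bidirectional edge touches $\gamma\cup\gamma'$; indeed for strong connectivity a lone outgoing edge can never fuse two clusters) all match the paper's construction in substance.

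There is one genuine gap, at the boundary vertices $x,y,z$. You say only ``include the tree-neighbor on the path so that the boundary edge becomes bidirectional,'' but for $v$ to be a trifurcation each of $x,y,z$ must, \emph{after} the surgery, still be strongly connected to its infinite strong cluster outside $F'(n)$; this requires $\omega'(w)$ to retain some $u\in\omega(w)$ with $u\notin F'(n)$, $w\in\omega(u)$, and $u$ in that infinite cluster. If ``include'' means replacing the state by the path-neighbor data, that exterior connection is destroyed and no trifurcation is produced; if it means adjoining the path-neighbor to $\omega(w)$, the new state has cardinality $|\omega(w)|+1$, which may have probability zero (and your final lower bound contains no factor accounting for these three states). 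The paper resolves this by setting $\omega'(w):=\omega(w)$ when $\omega(w)$ already contains the path-neighbor, and otherwise $\omega'(w):=\{w(i),w(j)\}$ with $w(i)$ the path-neighbor and $w(j)\in\omega(w)$ a retained exterior neighbor lying in the infinite strong cluster --- a two-element state of positive probability precisely because $p_2>0$. With that emendation your argument closes.
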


\begin{proof}
As Theorem \ref{thm:weakunique} we take a sequence of vertex sets $F(n)$. 
Let $\tilde{M}_{F(n)}(0)$ be the maximum number of the infinite strong clusters intersecting $F(n)$ under admissible  changing of the states of vertices in $F(n)$, 
and $\tilde{T}_x:= \{\omega \in \Omega \mid x$ is a trifurcation in the sense of strong cluster$\}$. 
For $x,y,z \in \partial' F(n)$ let $\tilde{L}_{F(n)}(x,y,z)$ be the event that $x,y,z$ are strongly connected to disjoint infinite strong clusters in $(G_\omega \backslash (F(n) \cup \partial' F(n)))\cup\{x,y,z\}$ respectively.
Denote $\tilde{L}(n,x,y,z) := \tilde{L}_{F(n)}(x,y,z) \cap \{\tilde{M}_{F(n)}(0) \ge 3\}$. 
 
As the proof of Theorem \ref{thm:weakunique}, it is sufficient to define $ \omega ' $ for $\omega \in \tilde{L}(n,x,y,z)$ 
such that $\tilde{J}(n):=\{ \omega' \mid \omega \in \tilde{L}(n,x,y,z)\}$ is a subset of $\tilde{L}(n,x,y,z) \cap \tilde{T}_v $ and  
$P_{\bf p}(\tilde{J}(n) \mid \tilde{L}(n,x,y,z))>0$ for sufficiently large $n$ and any $x,y,z \in \partial' F(n) $. 
Let $T$ be a spanning tree of $F'(n)$ as in the proof of Theorem \ref{thm:weakunique}. 
Let $\omega'$ be the configuration such that 
\begin{enumerate}
\item for $w \in V \backslash F'(n)$, $\omega'(w):=\omega(w)$;  
\item for $w $ on $\gamma $ and $\gamma' $ except $x,y,z,v$, $\omega'(w) :=\{w(i),w(j)\}$ such that $(w,w(i)), (w,w(j)) \in T$;
\item $\omega'(v) := \{v(i),v(j),v(k), v(h_4), \dots, v(h_l) \}$ where $v(i)$, $v(j)$, $v(k)$ are vertices on $\gamma $ and $\gamma' $, and $h_4, \dots, h_l$ are the small numbers in $\{ 1,2,\dots,d\} \backslash \{i,j,k\}$ with $l=\min \{m \ge 3 \mid p_m>0\} $;
\item for $w\in \{x,y,z\}$ 
\begin{enumerate}
\item if $ \omega(w)$ contains the vertex in $\gamma $ or $\gamma'$ next to $w$, then $\omega'(w):=\omega(w)$; 
\item if $ \omega(w)$ does not contain the vertex in $\gamma $ or $\gamma'$ next to $w$, 
then $\omega'(w) := \{w(i),w(j)\}$, where $w(i)$ is in $\gamma $ or $\gamma'$, and $j$ is the smallest number such that $w(j)$ in an infinite  strong cluster of $G_\omega \backslash (F(n) \cup \partial' F(n))$, and $w_j \in \omega(w)$. 
\end{enumerate}
\item for $w \in F'(n)$ except vertices on $\gamma $ and $\gamma' $,
\begin{enumerate}
\item if $p_0>0$, $\omega'(w)=\emptyset$;   
\item if $p_0=0$, $\omega'(w) :=\{w(i)\}$ such that $(w,w(i)) \in T$ and $w(i) \in F(n)$. 
\end{enumerate}
\end{enumerate}
Then $v$ is a trifurcation and we get $\tilde{J}(n) \subset \tilde{L}(n,x,y,z) \cap \tilde{T}_v $,    
and we obtain 
\begin{eqnarray*}
P_{\bf p}(\tilde{J}(n) \mid \tilde{L}(n,x,y,z)) 
\ge \min \left\{ \frac{h!(d-h)!}{d!} p_h \mid p_h>0 \right\}^{|F'(n)|} 
> 0. 
\end{eqnarray*}
\end{proof}

\section{Exponential decay}\label{sec:expdecay}

Let 
\begin{eqnarray*}
c({\bf p},d) 
:= \sum_{i=1}^d p_i \frac{i}{d} \ \ \ \text{ and } \ \ \ 
\tilde{c}({\bf p},d) 
:= \frac{1}{d^2}\left( \sum_{i=1}^d p_i \frac{i}{d} \right) \left(\sum_{i=2}^d p_i \frac{i(i-1)}{d(d-1)} \right).
\end{eqnarray*} 
Then $c({\bf p},d)=P_{\bf p}( y \in \omega(x)) $ and $\tilde{c}({\bf p},d)=P_{\bf p}( y \in \omega(x))P_{\bf p}( \{u,w\} \subset \omega(v))/d^2 $ 
for any $u,v,w,x,y \in V$ with $x\sim y$, $v \sim u$, $v \sim w$.

\begin{theorem}\label{thm:weakBoundary}
Suppose $\chi(G,{\bf p}) < \infty$. 
Then there exists $\alpha(G,{\bf p})>0$ such that 
\begin{eqnarray*}
P_{\bf p}(o \leftrightharpoons \partial B(n)) \le 2 e^{-n \alpha(G,{\bf p})} 
\end{eqnarray*}
for any sufficiently large $n$.
\end{theorem}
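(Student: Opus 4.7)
My plan is to follow the classical Hammersley--Simon--Lieb strategy: establish a sub-multiplicative recursion on $\tau_n := P_{\bf p}(o \leftrightharpoons \partial B(n))$ at a suitable finite scale, and then iterate. First I would observe that if $R := \sup\{k : o \leftrightharpoons \partial B(k)\}$ is the radius of $C$, then $C$ contains at least one vertex on each shell $\partial B(k)$ for $0 \le k \le R$, so
\[
\chi \ =\ \mathbb{E}_{\bf p}|C| \ \ge\ \mathbb{E}_{\bf p}(R+1) \ =\ \sum_{n \ge 0}\tau_n .
\]
Setting $S_m := \mathbb{E}_{\bf p}|C \cap \partial B(m)| = \sum_{x \in \partial B(m)} P_{\bf p}(o \leftrightharpoons x)$ one also has $\sum_{m \ge 0} S_m = \chi < \infty$, so $S_m \to 0$, and we may fix $m_0 \ge 1$ with $d\,S_{m_0-1} \le 1/2$.

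The technical core is the recursion
\[
\tau_{m+n} \ \le\ d\,S_{m-1} \cdot \tau_n .
\]
On $\{o \leftrightharpoons \partial B(m+n)\}$, pick a self-avoiding weakly open path from $o$ reaching $\partial B(m+n)$, and let $\{y,x\}$ (with $y \in \partial B(m-1)$, $x \in \partial B(m)$) be the edge at which the path first leaves $B(m-1)$. The initial segment then witnesses $\{o \leftrightharpoons y \text{ in } B(m-1)\}$, which is measurable with respect to $(\omega(v) : v \in B(m-1))$, and the terminal segment witnesses $\{x \leftrightharpoons \partial B_x(n) \text{ in } V \setminus B(m-1)\}$ (using $\delta(x,\partial B(m+n)) \ge n$ by the triangle inequality), which is measurable with respect to $(\omega(v) : v \notin B(m-1))$. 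These two events live on disjoint coordinate sets, hence are independent under the product measure $P_{\bf p}$; bounding the bridge-edge indicator by $1$, summing over the at most $d \cdot |\partial B(m-1)|$ eligible pairs $(y,x)$, and invoking vertex-transitivity $P_{\bf p}(x \leftrightharpoons \partial B_x(n)) = \tau_n$ delivers the recursion. Iterating, for $n = k m_0 + r$ with $0 \le r < m_0$ we get
\[
\tau_n \ \le\ (d\,S_{m_0-1})^{k}\,\tau_r \ \le\ 2^{-k} \ \le\ 2 \cdot 2^{-n/m_0} ,
\]
so $\alpha(G,{\bf p}) := (\log 2)/m_0 > 0$ yields the stated bound.

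The main obstacle is engineering a decomposition for which the inner and outer witnesses live on disjoint coordinate sets. A naive cut at the pivot vertex $x \in \partial B(m)$ fails because the single state $\omega(x)$ controls both the edge entering $x$ and the edges leaving $x$, so the two witnesses share a coordinate and we cannot multiply probabilities. The fix is to cut at the first exit \emph{edge} rather than at the vertex: this partitions the vertex states cleanly between $B(m-1)$ and its complement, with the shared bridge-edge event simply discarded at the cost of a combinatorial factor $\le d$. Should one prefer to preserve the bridge information, Reimer's inequality (Section~\ref{sec:Ineq}) applied at the pivot vertex gives the same bound up to constants, but the edge-cut version avoids invoking Reimer altogether.
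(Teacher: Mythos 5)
Your overall architecture (finite-scale recursion from $\sum_m S_m=\chi<\infty$, then iteration) matches the paper's, but the technical core --- the claimed independence recursion $\tau_{m+n}\le d\,S_{m-1}\tau_n$ --- has a genuine gap. You cut the witnessing self-avoiding path at its \emph{first} exit edge from $B(m-1)$ and assert that the terminal segment witnesses $\{x\leftrightharpoons\partial B_x(n)\text{ in }V\setminus B(m-1)\}$. That is false in general: after its first exit the path may re-enter $B(m-1)$, travel inside it, and exit again before reaching $\partial B(m+n)$. In such a configuration the terminal segment is not contained in $V\setminus B(m-1)$, and there need not exist \emph{any} pair $(y,x)$ for which both restricted events hold simultaneously (e.g.\ when the only route runs $o\to y_1\to x_1\to(\text{outside})\to y_2\to y_3\to x_3\to\partial B(m+n)$, with $o$ and $y_3$ connected inside $B(m-1)$ only via the excursion through $x_1$). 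Cutting at the \emph{last} exit symmetrically breaks the inner event. So the inclusion of $\{o\leftrightharpoons\partial B(m+n)\}$ into the union of products of genuinely independent events is not established, and this is precisely the point where some correlation inequality is unavoidable; your closing remark that the edge-cut ``avoids invoking Reimer altogether'' is the part that fails.

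The paper instead bounds $P_{\bf p}(o\leftrightharpoons\partial B(m+k))$ by $\sum_{x\in\partial B(m)}P_{\bf p}\bigl(\{o\leftrightharpoons x\}\;\Box\;\{\exists y\in L_x:\ y\leftrightharpoons\partial B(m+k)\}\bigr)$ and applies Reimer's inequality: the two halves of the self-avoiding path are disjoint vertex sets that serve as the witnesses $K$ and $W\setminus K$, and no deterministic spatial separation is needed. Because in this model the weak-openness of an edge at the pivot depends on $\omega(x)$ from both sides, the paper deliberately replaces $\{x\leftrightharpoons\partial B(m+k)\}$ by the event $A_x=\{\exists y\in L_x:\ y\leftrightharpoons\partial B(m+k)\}$, which does not read $\omega(x)$, and then proves a separate surgery lemma $P_{\bf p}(A_x)\le c({\bf p},d)^{-1}P_{\bf p}(x\leftrightharpoons\partial B(m+k))$; the factor $c({\bf p},d)^{-1}$ is then absorbed by choosing $m_0$ with $\mathbb{E}_{\bf p}(N_{m_0})<c({\bf p},d)/2$. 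Your fallback sentence (``Reimer applied at the pivot vertex'') glosses over exactly this shared-coordinate problem. Your edge-cut idea can in fact be salvaged, but only by going through Reimer: take the two events to be the \emph{unrestricted} connections $\{o\leftrightharpoons y\}$ and $\{x\leftrightharpoons\partial B(m+n)\}$, note that the two path-halves give disjoint witness sets, discard the bridge edge, and pay the factor $d$ from the number of neighbours of $y$. As written, however, the proof does not go through.
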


\begin{proof}
The proof is based on \cite{MR1707339}. 
We may assume $p_0<1$. 
Let $N_n := |\{x\in \partial B(n) \mid o \leftrightharpoons x\}| $ and 
$\tau_{\bf p}(o,x):=P_{\bf p}(o \leftrightharpoons x)$ for $x\in V$. 
Then we have 
\begin{eqnarray*}
\sum_{n=0}^\infty \mathbb{E}_{\bf p}(N_n) 
&=& \sum_{n=0}^\infty \sum_{x\in \partial B(n)} \tau_{\bf p}(o,x) 
= \sum_{x\in V} \tau_{\bf p}(o,x) \\ 
&=& \mathbb{E}_{\bf p}(|\{x\in V \mid o \leftrightharpoons x\}|) 
= \chi(G,{\bf p})
< \infty. 
\end{eqnarray*}
Hence $\mathbb{E}_{\bf p}(N_n) \to 0$ as $n \to \infty$, and 
there exists $m_0\in \mathbb{N}$ such that $\mathbb{E}_{\bf p}(N_{m_0})<\frac{c({\bf p},d)}{2}$. 

Let $A_x :=\{ \exists y\in L_x \text{ s.t.\ } y \leftrightharpoons \partial B(k)\}$ and $B_x:= \{ x \leftrightharpoons \partial B(k)\}$. 
Note that $A_x$ is independent of the state at $x$. 
Let $A_{x,i}  \subset A_x$ be the event that $i$ is the minimum number such that $x(i) \leftrightharpoons \partial B(k)$ in $V \backslash \{x\}$. 
The family $\{A_{x,i}\}_i$ is a partition of $A_x$. 
For $\omega \in A_{x,i}$ and $ S \subset L_x $ with $x(i)\in S$, we set 
\begin{eqnarray*}
\omega_S'(z) =
\begin{cases}
\omega(z) & \text{ if } z \not =x \\ 
S & \text{ if } z = x .  
\end{cases}
\end{eqnarray*}
and 
\begin{eqnarray*}
A_{x,i}' := \bigcup_{\omega\in A_{x,i}} \bigcup_{S \subset L_x ;  x(i) \in S}\{\omega_S'\} \subset A_{x,i} \cap B_x.
\end{eqnarray*} 
Since $\{A_{x,i}\}_i$ is a partition of $A_x$, $A_{x,i}'$ are disjoint each other in  $i$. 
Hence 
\begin{eqnarray*}
P_{\bf p}(B_x) 
&\ge& P_{\bf p}(\sqcup_{i=1,\dots,d}A_{x,i}') 
= \sum_{i=1,\dots,d} P_{\bf p}(A_{x,i}') \\
&=& \sum_{i=1,\dots,d} P_{\bf p}(A_{x,i}'|A_{x,i}) P_{\bf p}(A_{x,i}) \\ 
&=& \sum_{i=1,\dots,d} P_{\bf p}(x(i) \in \omega(x))  P_{\bf p}(A_{x,i})  \\ 
&=& c({\bf p}, d) P_{\bf p}(\sqcup_{i=1,\dots,d} A_{x,i}) \\ 
&=& c({\bf p}, d) P_{\bf p}(A_x) .  
\end{eqnarray*}
Using Reimer's inequality, and by the vertex transitivity, we have 
\begin{eqnarray*}
&& P_{\bf p}(o \leftrightharpoons \partial B(m+k)) \\ 
&\le & P_{\bf p} \left(\bigcup_{x\in \partial B(m)} \{o \leftrightharpoons x\} \Box \{\exists y\in L_x \text{ s.t.\ } y \leftrightharpoons \partial B(m+k)\} \right) \\
&\le & \sum_{x\in \partial B(m)} P_{\bf p} (o \leftrightharpoons x) P_{\bf p}\left(\exists y\in L_x \text{ s.t.\ } y \leftrightharpoons \partial B(m+k) \right) \\
&\le & \sum_{x\in \partial B(m)} \tau_{\bf p}(o,x) c({\bf p},d)^{-1} P_p\left(x \leftrightharpoons \partial B(m+k) \right) \\
&\le &  c({\bf p},d)^{-1} \sum_{x\in \partial B(m)} \tau_{\bf p}(o,x)P_{\bf p} \left(x \leftrightharpoons \partial B(x,k) \right) \\
&=&  c({\bf p},d)^{-1} \sum_{x\in \partial B(m)} \tau_{\bf p}(o,x) P_{\bf p} \left(o \leftrightharpoons \partial B(k) \right) \\
&\le& c({\bf p},d)^{-1} P_{\bf p} \left(o \leftrightharpoons \partial B(k) \right) \mathbb{E}_{\bf p}(N_m) 
\end{eqnarray*}
where $\partial B(x,k)=\{v\in V \mid \delta(x,v) = k\} $. 
Since for any $n$ there are $r,s\in \mathbb{N}$ such that $n=m_0r+s$ and $0\le s < m_0$, 
we have 
\begin{eqnarray*}
P_{\bf p}(o \leftrightharpoons \partial B(n)) 
&\le & P_{\bf p}(o \leftrightharpoons \partial B(m_0r)) \\ 
&\le & c({\bf p},d)^{-1} \mathbb{E}_{\bf p}(N_{m_0}) P_{\bf p}(o \leftrightharpoons \partial B(m_0r-m_0)) \\ 
& \le &  \dots \\
&\le & (c({\bf p},d)^{-1} \mathbb{E}_{\bf p}(N_{m_0}))^{r-1} P_{\bf p}(0 \leftrightharpoons \partial B(m_0)) \\ 
&\le & (c({\bf p},d)^{-1} \mathbb{E}_{\bf p}(N_{m_0}))^r\\ 
&\le & 2^{-r} = 2^{\frac{s}{m_0}} 2^{-\frac{n}{m_0}} \\
&\le & 2 e^{-n\alpha(G,{\bf p})}
\end{eqnarray*}
where $\alpha(G,{\bf p}) := \frac{\log 2}{m_0}>0$. 
\end{proof}

\begin{theorem}\label{thm:strongBoundary}
Suppose $\tilde{\chi}(G,{\bf p}) < \infty$. 
Then there exists $\tilde{\alpha}(G,{\bf p})>0$ such that 
\begin{eqnarray*}
P_{\bf p}(o \leftrightarrow \partial B(n)) \le 2 e^{-n \tilde{\alpha}(G,{\bf p})} 
\end{eqnarray*}
for any sufficiently large $n$.
\end{theorem}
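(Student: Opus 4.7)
The plan is to mirror the proof of Theorem \ref{thm:weakBoundary} with $\leftrightharpoons$ replaced by $\leftrightarrow$ and $c({\bf p},d)$ replaced by $\tilde c({\bf p},d)$. Defining $\tilde N_n := |\{x\in \partial B(n) \mid o\leftrightarrow x\}|$, the identity $\sum_{n\ge 0}\mathbb{E}_{\bf p}(\tilde N_n) = \tilde\chi(G,{\bf p})<\infty$ gives $\mathbb{E}_{\bf p}(\tilde N_n)\to 0$, so I can fix $m_0$ with $\mathbb{E}_{\bf p}(\tilde N_{m_0}) < \tilde c({\bf p},d)/2$. (The degenerate case $\tilde c({\bf p},d)=0$, meaning $p_0+p_1=1$, forces $|\tilde C|\le 2$ and the conclusion is trivial.)

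The core step is to establish the recursion
\[
P_{\bf p}(o\leftrightarrow \partial B(m+k)) \le \tilde c({\bf p},d)^{-1}\, P_{\bf p}(o\leftrightarrow \partial B(k))\, \mathbb{E}_{\bf p}(\tilde N_m).
\]
Its derivation has the same two sub-steps as in the weak case. First, splitting any self-avoiding strong path $o=v_0,\dots,v_\ell$ with $v_\ell\in \partial B(m+k)$ at its first vertex $x=v_j$ on $\partial B(m)$ produces vertex-disjoint witnesses for $\{o\leftrightarrow x\}$ and $\{\exists y\in L_x: y\leftrightarrow \partial B(x,k)\}$; Reimer's inequality then yields the bound $\sum_{x\in \partial B(m)} P_{\bf p}(o\leftrightarrow x)\, P_{\bf p}(\exists y\in L_x: y\leftrightarrow \partial B(x,k))$. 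Second, $P_{\bf p}(x\leftrightarrow \partial B(x,k)) \ge \tilde c({\bf p},d)\, P_{\bf p}(\exists y\in L_x: y\leftrightarrow \partial B(x,k))$ by a strong analogue of the modification argument: partition the right-hand event by the minimal $i$ with $x(i)\leftrightarrow \partial B(x,k)$ in $V\setminus\{x\}$, then produce a configuration in $\{x\leftrightarrow \partial B(x,k)\}$ by simultaneously forcing $x(i)\in \omega(x)$ (cost $c({\bf p},d)$ from the marginal of $\omega(x)$) and $x\in \omega(x(i))$ alongside the path's next vertex $z$ already lying in $\omega(x(i))$ (cost $P_{\bf p}(\{x,z\}\subset \omega(x(i)))/P_{\bf p}(z\in \omega(x(i)))$ from the conditional joint-membership probability). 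Vertex-transitivity $P_{\bf p}(x\leftrightarrow \partial B(x,k))=P_{\bf p}(o\leftrightarrow \partial B(k))$ together with the observation $\{x\leftrightarrow \partial B(m+k)\}\subset \{x\leftrightarrow \partial B(x,k)\}$ (by the triangle inequality) closes the recursion.

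Iterating exactly as in Theorem \ref{thm:weakBoundary}, writing $n=m_0 r + s$ with $0\le s< m_0$, one gets $P_{\bf p}(o\leftrightarrow \partial B(n))\le (\tilde c({\bf p},d)^{-1}\mathbb{E}_{\bf p}(\tilde N_{m_0}))^r \le 2^{-r}\le 2e^{-n\tilde\alpha(G,{\bf p})}$ with $\tilde\alpha(G,{\bf p}) := (\log 2)/m_0$. The main obstacle is the modification step: unlike the weak case, where only $\omega(x)$ needs enlarging, enforcing a strong edge $x\leftrightarrow y$ requires enlarging $\omega(y)$ as well, and $\omega(y)$ is already partly pinned by the strong path leaving $y$. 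Arranging the enlargement of $\omega(y)$ to add $x$ without disturbing that path, while controlling the probability cost uniformly in the path's shape, is precisely what forces the joint-membership factor $P_{\bf p}(\{u,w\}\subset \omega(v))$ into $\tilde c({\bf p},d)$ in place of the single-membership factor $c({\bf p},d)$ sufficient for weak connections.
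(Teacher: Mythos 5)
Your overall strategy is exactly the paper's: define $\tilde N_n$, use $\sum_n\mathbb{E}_{\bf p}(\tilde N_n)=\tilde\chi(G,{\bf p})<\infty$ to pick $m_0$, prove a one-step inequality $P_{\bf p}(x\leftrightarrow\partial B(k))\ge \tilde c({\bf p},d)\,P_{\bf p}(\exists y\in L_x:\,y\leftrightarrow\partial B(k))$, combine with Reimer's inequality, and iterate. The one step where your sketch has a genuine gap is the modification lemma. You partition by the minimal $i$ with $x(i)\leftrightarrow\partial B(x,k)$ in $V\setminus\{x\}$ and claim the surgery on $\omega(x(i))$ costs the conditional probability $P_{\bf p}(\{x,z\}\subset\omega(x(i)))/P_{\bf p}(z\in\omega(x(i)))$. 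But the event $A_{x,i}$ is \emph{not} of the form $\{z\in\omega(x(i))\}\cap D$ with $D$ independent of $\omega(x(i))$ for a single deterministic $z$: it is a union over all possible gateway neighbours $z$ of $x(i)$, and the minimality clause also depends on $\omega(x(i))$. Hence the $\omega(x(i))$-marginal mass of $A_{x,i}$ can greatly exceed $P_{\bf p}(z\in\omega(x(i)))$ (for instance, when every neighbour of $x(i)$ other than $x$ is already strongly joined to $\partial B(x,k)$ off $B(x,1)$, that mass is of order $1-p_0$ rather than of order $1/d$), and the claimed per-piece factor fails. A second, smaller imprecision: literally ``enlarging'' $\omega(x(i))$ by adding $x$ may have probability zero (if $p_{j+1}=0$ while $p_j>0$); one must resample $\omega(x(i))$ wholesale on $\{R\supset\{x,z\}\}$, which is precisely why the \emph{unconditional} joint probability $P_{\bf p}(\{u,w\}\subset\omega(v))$, and hence $\tilde c$, enters.

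The gap is repairable, and in two ways. Within your scheme: either refine the partition to pieces indexed by the pair $(i,z)$ so that on each piece the dependence on $\omega(x(i))$ is exactly ``$z\in\omega(x(i))$'' for one fixed $z$ (then your conditional cost is legitimate), or keep the coarse partition and bound each piece's $\omega(x(i))$-marginal by $1$, accepting the cost $c({\bf p},d)P_{\bf p}(\{u,w\}\subset\omega(v))=d^2\tilde c({\bf p},d)$ per piece; in either case summing over at most $d^2$ pieces still yields $P_{\bf p}(A_x)\le\tilde c({\bf p},d)^{-1}P_{\bf p}(B_x)$. The paper instead sidesteps the dependence entirely: it indexes the partition by vertices $z_l\in\partial B(x,2)$ and takes $A_{x,l}$ to be the event that $z_l\leftrightarrow\partial B(k)$ in $V\setminus B(x,1)$ \emph{and} $\omega(z_l)\cap L_x\neq\emptyset$ --- an event independent of all states on $B(x,1)$ --- and then freely resets both $\omega(x)$ and $\omega(x(j))$ for the $j$ with $x(j)\in\omega(z_l)$, paying exactly $P_{\bf p}(x(j)\in\omega(x))\,P_{\bf p}(\{x,z_l\}\subset\omega(x(j)))=d^2\tilde c({\bf p},d)$ per piece and summing over $|\partial B(x,2)|\le d^2$ pieces. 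You correctly identified the obstacle; your proposed resolution needs one of these adjustments before it is a proof.
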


\begin{proof}
The proof is also based on \cite{MR1707339}. 
We may assume $p_0+p_1<1$. 
For $k\ge 3$ and $x\in B(k-2)$, let $A_x :=\{ \exists y\in L_x \text{ s.t.\ } y \leftrightarrow \partial B(k)\}$ and $B_x:= \{ x \leftrightarrow \partial B(k)\}$. 
For $\partial B(x,2)$, we fix the order of the vertices as $\partial B(x,2)=\{z_1,z_2, \dots ,z_m\}$ where $m=|\partial B(x,2)|$. 
Let $A_{x,l} \subset A_x$ be the event that $l$ is the minimum number such that $z_l \leftrightarrow \partial B(k)$ in $V \backslash B(x,1)$ and $\omega(z_l) \cap L_x \not=\emptyset$. 

For each $\omega \in A_{x,l}$, let $j=j(\omega)\in \{1,2,\dots,d \}$ be the minimum number such that $x(j) \in \omega(z_l)$. 
For $\omega \in A_{x,l}$, $ S \subset L_x $ with $ x(j) \in S$, and $ R \subset L_{x(j)} $ with $ \{x, z_l\} \subset R$, we set 
\begin{eqnarray*}
\omega_{S,R}'(v) =
\begin{cases}
\omega(v) & \text{ if } v \not =x,x(j) \\ 
S & \text{ if } v = x \\ 
R & \text{ if } v = x(j). 
\end{cases}
\end{eqnarray*}
Let 
\begin{eqnarray*}
A_{x,l}' := \bigcup_{\omega\in A_{x,l}} \bigcup_{S \subset L_x ; x(j) \in S} \bigcup_{R\subset L_{x(j)}; \{x, z_l\} \subset R}\{\omega_{S,R}'\} .
\end{eqnarray*} 
Since $A_{x,l}$ is independent of the states on $B(x,1)$, we have $ A_{x,l}' \subset A_{x,l} \cap B_x $. 
Hence 
\begin{eqnarray*}
P_{\bf p}(B_x) 
&\ge & P_{\bf p}(A'_{x,l}) \\ 
&=& P_{\bf p}( A'_{x,l} | A_{x,l}) P_{\bf p}( A_{x,l})\\ 
&=& P_{\bf p}(x(j) \in \omega(x)) P_{\bf p}(\{x, z_l \} \subset \omega(x_j)) P_{\bf p}(A_{x,l})  \\ 
&=& d^2\tilde{c}({\bf p}, d) P_{\bf p}(A_{x,l}) . 
\end{eqnarray*}
Thus we have 
\begin{eqnarray*}
P_{\bf p}(A_x) 
&=& P_{\bf p}(\cup_{l=1}^m A_{x,l}) \\
&\le & \sum_{l=1}^m P_{\bf p}( A_{x,l}) \\
&\le & \sum_{l=1}^m d^{-2}\tilde{c}({\bf p}, d)^{-1}  P_{\bf p}( B_x) \\
&\le & \tilde{c}({\bf p}, d)^{-1}  P_{\bf p}( B_x) . 
\end{eqnarray*}
The rest of the proof is same to the case of the weak connection. 
\end{proof}

\begin{theorem}\label{thm:NumVertex}
\begin{enumerate}
\item Suppose $\chi(G,{\bf p}) < \infty$. 
Then we have 
\begin{eqnarray*}
P_{\bf p}(|C|\ge n ) 
\le  e^{-n\frac{c({\bf p},d)^2}{2 \chi(G,{\bf p})^2}} 
\end{eqnarray*}
for $n > \frac{\chi(G,{\bf p})^2}{c({\bf p},d)^2}$.
\item Suppose $\tilde{\chi}(G,{\bf p}) < \infty$. 
Then we have 
\begin{eqnarray*}
P_{\bf p}(|\tilde{C}|\ge n ) 
\le  e^{-n\frac{\tilde{c}({\bf p},d)^2}{2 \tilde{\chi}(G,{\bf p})^2}} 
\end{eqnarray*}
for $n > \frac{\tilde{\chi}(G,{\bf p})^2}{\tilde{c}({\bf p},d)^2}$.
\end{enumerate}
\end{theorem}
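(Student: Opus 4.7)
The plan is to combine a Chernoff-type bound with iterated applications of Reimer's inequality, in the spirit of Theorems \ref{thm:weakBoundary} and \ref{thm:strongBoundary}. Write $\chi = \chi(G, {\bf p})$ and $c = c({\bf p}, d)$ for part (i); part (ii) is proved by the same scheme with $(\tilde{C}, \tilde{\chi}, \tilde{c})$ replacing $(C, \chi, c)$, since $\tilde{c}$ plays the role for bidirectional extensions that $c$ plays for unidirectional ones, exactly as when passing from Theorem \ref{thm:weakBoundary} to Theorem \ref{thm:strongBoundary}.

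For any $\lambda > 0$, Markov's inequality applied to $e^{\lambda|C|}$ gives
\begin{eqnarray*}
P_{\bf p}(|C| \geq n) \leq e^{-\lambda n} \mathbb{E}_{\bf p}(e^{\lambda|C|}),
\end{eqnarray*}
so the task reduces to controlling the moment generating function $\mathbb{E}_{\bf p}(e^{\lambda|C|}) = \sum_{k \geq 0} \lambda^k \mathbb{E}_{\bf p}(|C|^k)/k!$ for small $\lambda > 0$. The moments factor as
\begin{eqnarray*}
\mathbb{E}_{\bf p}(|C|^k) = \sum_{x_1, \ldots, x_k \in V} P_{\bf p}(o \leftrightharpoons x_i \text{ for all } i \leq k),
\end{eqnarray*}
and iterative application of Reimer's inequality along a spanning tree of pivot vertices should yield a bound of the form $\mathbb{E}_{\bf p}(|C|^k) \leq k!(\chi^2/c^2)^k$. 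Heuristically, each extension step costs a factor $c^{-1}$ (via the sub-multiplicative estimate $P_{\bf p}(B_x) \geq c \cdot P_{\bf p}(A_x)$ from the proof of Theorem \ref{thm:weakBoundary}) and sums to a factor $\chi$ (for summing over pivot locations).

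This moment bound implies $\mathbb{E}_{\bf p}(e^{\lambda|C|}) \leq (1 - \lambda \chi^2/c^2)^{-1}$ for $\lambda < c^2/\chi^2$. Choosing $\lambda = c^2/(2\chi^2)$ makes the denominator equal $1/2$, which gives $P_{\bf p}(|C| \geq n) \leq 2 \exp(-nc^2/(2\chi^2))$; the prefactor $2$ is then absorbed into the exponent under the standing assumption $n > \chi^2/c^2$, which guarantees $nc^2/(2\chi^2) > 1/2$. For part (ii), the same calculation proceeds with $\tilde{c}$ replacing $c$, where the relevant sub-multiplicative extension estimate is the one from the proof of Theorem \ref{thm:strongBoundary}, capturing the additional cost of requiring both $(x,y)$ and $(y,x)$ to appear in $E_\omega$.

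The main obstacle is the moment bound via Reimer. The $k$ connection events $\{o \leftrightharpoons x_i\}$ do not automatically form a disjoint occurrence, because the witnessing paths can share vertices. Setting up the correct pivot decomposition, so that each Reimer application yields exactly the factor $\chi/c$ and the combinatorial $k!$ comes out right, requires care in ordering the pivots and identifying which subpaths are witnessed disjointly. Once this combinatorial bookkeeping is in place, the remainder of the argument is the routine Chernoff optimization described above.
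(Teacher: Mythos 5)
Your overall strategy (moments of $|C|$ via iterated Reimer, then a Chernoff bound) is the same as the paper's, but two of your steps do not go through as stated. First, the moment bound $\mathbb{E}_{\bf p}(|C|^k)\le k!\,(\chi^2/c^2)^k$ is not what the pivot/spanning-tree decomposition delivers. When you resolve the $k$ events $\{o\leftrightharpoons x_i\}$ into disjointly witnessed subpaths, the branching structure is a binary tree (a ``skeleton'') with $k+1$ labelled leaves, and the number of such tree topologies is $\frac{(2k-2)!}{2^{k-1}(k-1)!}=(2k-3)!!$, not $k!$; this is larger than $k!$ by a factor growing like $2^k/\mathrm{poly}(k)$. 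The bound the paper actually proves is $\mathbb{E}_{\bf p}(|C|^k)\le (2k-3)!!\,(\chi/c)^{2k-1}$, and with these moments the series $\sum_k \lambda^k\mathbb{E}_{\bf p}(|C|^k)/k!$ has radius of convergence exactly $c^2/(2\chi^2)$ --- the target exponent sits \emph{on the boundary} of the region where the moment generating function is controlled, so your plan of taking $\lambda$ at half the radius of convergence would only yield the rate $c^2/(4\chi^2)$. To get the rate $c^2/(2\chi^2)$ you must exploit the square-root singularity $\sum_m\binom{2m}{m}x^m=(1-4x)^{-1/2}$ at $x=1/4$, which is precisely what the paper does.

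Second, the prefactor $2$ cannot be ``absorbed into the exponent'': $2e^{-n\lambda}\le e^{-n\lambda}$ is false for every $n$, and the condition $n>\chi^2/c^2$ does not help. The theorem as stated has no prefactor, and any argument of the form ``Markov applied to $e^{\lambda|C|}$ with $\lambda$ fixed'' leaves a multiplicative constant $\mathbb{E}_{\bf p}(e^{\lambda|C|})>1$ that must be removed. The paper removes it by applying Markov to $|C|e^{t|C|}$ rather than $e^{t|C|}$ (gaining a factor $1/n$), choosing the $n$-dependent tilt $t=\frac{c^2}{2\chi^2}-\frac{1}{2n}$, and using the square-root singularity to show $\mathbb{E}_{\bf p}(|C|e^{t|C|})\le\sqrt{n}$; combining these gives $P_{\bf p}(|C|\ge n)\le e^{1/2}n^{-1/2}e^{-nc^2/(2\chi^2)}$, from which the stated bound follows. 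Both the correct combinatorial count of skeletons and this near-critical, $n$-dependent choice of tilt are essential and are missing from your outline.
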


\begin{proof}
The proof is based on \cite{MR762034} (cf.\ \cite{MR1707339}). 
A {\it skeleton} is a  (non-directed) tree whose vertices have degree 1 ({\it exterior vertices}) or degree 3 ({\it interior vertices}). 
A skeleton with $k$ exterior vertices has $k-2$ interior vertices, and $2k-3$ edges.  
A skeleton with $k$ exterior vertices is called {\it labelled} 
if there exists an assignment of the numbers $0,1,2, \dots ,k-1$ to the exterior vertices. 
Two labelled skeletons are called {\it isomorphic} if there exists a one to one correspondence between their vertex sets under which both the adjacency relation and the labellings of the exterior vertices are preserved. 

\begin{lemma}[\cite{MR1707339}] 
Let $N_{n+1}$ be the number of labelled skeletons with $n+1$ exterior vertices. 
Then we have 
\begin{eqnarray*}
N_{n+1} = \frac{(2n-2)!}{2^{n-1}(n-1)!}
\end{eqnarray*}
\end{lemma}

\begin{lemma}[\cite{MR1707339}] 
Let $G=(V,E)$ be a non-directed connected graph. 
For any $X=(x_0,x_1, \dots ,x_k) \in V^{k+1}$,  
there is a labelled skeleton $S$ with $k+1$ exterior vertices together with a mapping $\psi_X$ from the vertex set of $S$ into $V$ 
such that 
{\rm (a)} the exterior vertices of $S$ with label $i$ is mapped to $x_i$ by $\psi_X$ for $i=0,1,\dots,k$, and 
{\rm (b)} the edges of $S$ corresponds to paths joining the $2k-1$ pairs which are disjoint except the end vertices, which may be a trivial path. 
\end{lemma}

Let $X=(x_0,x_1,\dots,x_n) \in V^{n+1}$ with $x_0=o$, and $S$ be a labelled skeleton with $n+1$ exterior vertices. 
We call a map $\psi_X:S \to G$ in the above lemma an admissible mapping. 
First if $s \in V(S)$ is a farthest vertex from the labeled vertex $0$ with respect to the graph distance of $S$, then there is only one adjacent vertex $t$ with $\delta(t,0)= \delta(s,0) -1$. 
If $\psi_X(s) \not=\psi_X(t)$, then using Reimer's inequality, we have  
\begin{eqnarray*}
&& P_{\bf p}\left( \psi_X(u) \leftrightharpoons \psi_X(v) \text{ for all } \{u,v\} \in E(S) \right) \\
&\le & P_{\bf p}\left( (\psi_X(u) \leftrightharpoons \psi_X(v) \text{ for all } \{u,v\} \in E(S)\backslash \{\{s,t\}\}) \Box (\cup_{v \in L_{\psi_X(t)}} \{ v \leftrightharpoons \psi_X(s) \} ) \right) \\
&\le & P_{\bf p}\left( \psi_X(u) \leftrightharpoons \psi_X(v) \text{ for all } \{u,v\} \in E(S)\backslash \{\{s,t\}\}\right) 
P_{\bf p}\left(\cup_{v \in L_{\psi_X(t)}} \{ v \leftrightharpoons \psi_X(s) \}   \right) \\
&\le & P_{\bf p}\left( \psi_X(u) \leftrightharpoons \psi_X(v) \text{ for all } \{u,v\} \in E(S)\backslash \{\{s,t\}\}\right) 
c({\bf p},d)^{-1} P_{\bf p}\left(\psi_X(t) \leftrightharpoons \psi_X(s) \right) . 
\end{eqnarray*}
The last inequality is derived as in the proof of Theorem \ref{thm:weakBoundary}. 
If $\psi_X(s) = \psi_X(t)$, then since $P_{\bf p}\left(\psi_X(t) \leftrightharpoons \psi_X(s) \right)=1$ and $c({\bf p},d)^{-1}\ge 1$, we have  
\begin{eqnarray*}
&& P_{\bf p}\left( \psi_X(u) \leftrightharpoons \psi_X(v) \text{ for all } \{u,v\} \in E(S) \right) \\
&\le & P_{\bf p}\left( \psi_X(u) \leftrightharpoons \psi_X(v) \text{ for all } \{u,v\} \in E(S)\backslash \{\{s,t\}\}\right) 
c({\bf p},d)^{-1} P_{\bf p}\left(\psi_X(t) \leftrightharpoons \psi_X(s)  \right) . 
\end{eqnarray*}
Next for $S\backslash \{s\}:=(V(S) \backslash \{s\}, E(S)\backslash \{s,t\})$, let $s' \in V(S)  \backslash \{s\}$ be a farthest vertex from the labeled vertex $0$ of $S\backslash \{s\}$, then there is only one adjacent vertex $t'$ with $\delta(t',0)= \delta(s',0) -1$. 
By the same procedure, we obtain 
\begin{eqnarray*}
&& P_{\bf p}\left( \psi_X(u) \leftrightharpoons \psi_X(v) \text{ for all } \{u,v\} \in E(S) \right) \\
&\le & P_{\bf p}\left( \psi_X(u) \leftrightharpoons \psi_X(v) \text{ for all } \{u,v\} \in E(S)\backslash \{ \{s,t\} ,\{s',t'\}\}\right) \\ 
&& \ c({\bf p},d)^{-1}P_{\bf p}\left(\psi_X(t') \leftrightharpoons \psi_X(s')  \right)  
c({\bf p},d)^{-1} P_{\bf p}\left(\psi_X(t) \leftrightharpoons \psi_X(s) \right) . 
\end{eqnarray*}
Inductively, we get 
\begin{eqnarray*}
&& P_{\bf p}\left( \psi_X(u) \leftrightharpoons \psi_X(v) \text{ for all } \{u,v\} \in E(S) \right) \\
&\le & P_{\bf p}\left( \psi_X(0)  \leftrightharpoons \psi_X(r) \text{ for } \{0,r\} \in E(S) \right) 
\prod_{\{t,s\} \in E(S), t,s\not=0} \left( c({\bf p},d)^{-1} P_{\bf p}\left(\psi_X(t) \leftrightharpoons \psi_X(s)  \right) \right) \\  
&\le & \prod_{\{t,s\} \in E(S)} \left( c({\bf p},d)^{-1} P_{\bf p}\left(\psi_X(t) \leftrightharpoons \psi_X(s)  \right) \right) \\ 
&\le & c({\bf p},d)^{-|E(S)|} \prod_{\{t,s\} \in E(S)} P_{\bf p}\left(\psi_X(t) \leftrightharpoons \psi_X(s)  \right) . 
\end{eqnarray*}
Set $\tau_{\bf p}(x_0,x_1,\dots,x_n):=P_{\bf p}(x_0 \leftrightharpoons x_1  \leftrightharpoons  \dots  \leftrightharpoons x_n \text{ in }C)$. 
Then we have 
\begin{eqnarray*}
\tau_{\bf p}(x_0,x_1,\dots,x_n) 
&=& P_{\bf p} \left( \bigcup_{S}\bigcup_{\psi_X:S \to G}\left( \psi_X(u) \leftrightharpoons \psi_X(v) \text{ for all } \{u,v\} \in E(S) \right)\right) \\ 
&\le & \sum_{S}\sum_{\psi_X:S \to G} P_{\bf p}\left( \psi_X(u) \leftrightharpoons \psi_X(v)  \text{ for all } \{u,v\} \in E(S) \right) \\ 
&\le &  \sum_{S}c({\bf p},d)^{-|E(S)|} \sum_{\psi_X:S \to G} \prod_{\{t,s\} \in E(S)} P_{\bf p}\left(\psi_X(t) \leftrightharpoons \psi_X(s)  \right)
\end{eqnarray*}
where $S$ runs over all labelled skeletons with $k+1$ exterior vertices, and $\psi_X:S \to G$ runs over all admissible mappings. 
Since $|C|=\sum_{x\in V} I_{\{o \leftrightharpoons x\}}$, we have 
\begin{eqnarray*}
\mathbb{E}_{\bf p}(|C|^n) 
&=& \mathbb{E}_{\bf p}\left(\left(\sum_{x\in V} I_{\{o \leftrightharpoons x\}}\right)^n\right) \\ 
&=& \mathbb{E}_{\bf p}\left(\sum_{x_1,\dots,x_n\in V} I_{\{o \leftrightharpoons x_1\}}\cdots I_{\{o \leftrightharpoons x_n\}}\right) \\ 
&=& \sum_{x_1,\dots,x_n\in V} \mathbb{E}_{\bf p}\left(I_{\{o \leftrightharpoons x_1\}}\cdots I_{\{o \leftrightharpoons x_n\}}\right) \\ 
&=& \sum_{x_1,\dots,x_n\in V} \tau_{\bf p}(o,x_1,\dots,x_n). 
\end{eqnarray*}
Since $|E(S)|=2n-1$, we get	@
\begin{eqnarray*}
\mathbb{E}_{\bf p}(|C|^n) 
&\le &   \sum_{x_1,\dots,x_n\in V} \sum_{S} c({\bf p},d)^{-|E(S)|} \sum_{\psi_X:S \to G} \prod_{\{t,s\} \in E(S)} P_{\bf p}\left(\psi_X(t) \leftrightharpoons \psi_X(s)  \right) \\ 
&\le & c({\bf p},d)^{-2n+1} \sum_{S}\sum_{\psi:V(S) \to V, \psi(0)=o} \prod_{\{t,s\} \in E(S)} P_{\bf p}\left(\psi(t) \leftrightharpoons \psi(s)  \right) \\
&\le & c({\bf p},d)^{-2n+1}  \sum_{S} \sum_{(y_1,y_2, \dots, y_{2n-1}) \in V^{2n-1}} \prod_{i=1}^{2n-1} P_{\bf p}\left(o \leftrightharpoons y_i  \right) \\
&=& c({\bf p},d)^{-2n+1}  \sum_{S} \left( \sum_{y \in V} P_{\bf p}\left( o \leftrightharpoons y   \right)\right)^{2n-1} \\ 
&\le & N_{n+1}\left( \frac{\chi(G,{\bf p})}{c({\bf p},d)} \right)^{2n-1} 
= \frac{(2n-2)!}{2^{n-1}(n-1)!}\left(\frac{\chi(G,{\bf p})}{c({\bf p},d)}\right)^{2n-1} 
\end{eqnarray*}
where maps $\psi:V(S) \to V$ runs over all maps with $\psi(0)=o$. 
Thus 
\begin{eqnarray*}
\mathbb{E}_{\bf p}(|C|e^{t|C|}) 
&=& \sum_{m=0}^\infty  \frac{t^m}{m!}  \mathbb{E}_{\bf p}(|C|^{m+1}) \\ 
&\le & \sum_{m=0}^\infty  \frac{t^m}{m!}  \frac{(2m)!}{2^{m}m!}\left( \frac{\chi(G,{\bf p})}{c({\bf p},d)}\right)^{2m+1} \\ 
&\le & \frac{\chi(G,{\bf p})}{c({\bf p},d)}  \sum_{m=0}^\infty \binom{2m}{m} \left( \frac{t}{2}  \frac{\chi(G,{\bf p})^2}{c({\bf p},d)^2} \right)^m . 
\end{eqnarray*}
For $t:= \frac{c({\bf p},d)^2}{2 \chi(G,{\bf p})^2 }-\frac{1}{2n}$, from the assumption $n > \frac{\chi(G,{\bf p})^2}{c({\bf p},d)^2} $ we have  
\begin{eqnarray*}
0< \frac{t}{2}  \frac{\chi(G,{\bf p})^2}{c({\bf p},d)^2} 
= \frac{1}{4}\left( 1- \frac{\chi(G,{\bf p})^2 }{n c({\bf p},d)^2}   \right) < \frac{1}{4}.  
\end{eqnarray*}
Thus we have 
\begin{eqnarray*}
\mathbb{E}_{\bf p}(|C|e^{t|C|}) 
\le \frac{\chi(G,{\bf p})}{c({\bf p},d)}  \sum_{m=0}^\infty \binom{2m}{m} \left( \frac{t}{2}  \frac{\chi(G,{\bf p})^2}{c({\bf p},d)^2}  \right)^m
= \frac{\chi(G,{\bf p})}{c({\bf p},d)}  \frac{1}{\sqrt{1-2t \frac{\chi(G,{\bf p})^2}{c({\bf p},d)^2}   }} 
=  \sqrt{n}. 
\end{eqnarray*}
By Markov's inequality, we have  
\begin{eqnarray*}
P_{\bf p}(|C|\ge n ) 
= P_{\bf p}(|C|e^{t|C|} \ge n e^{tn}) 
\le  \frac{\mathbb{E}_{\bf p}(|C|e^{t|C|})}{n e^{tn}} 
\le  \frac{\sqrt{n}}{n e^{n\frac{c({\bf p},d)^2}{2 \chi(G,{\bf p})^2}-\frac{1}{2}}} 
\le  e^{-n\frac{c({\bf p},d)^2}{2 \chi(G,{\bf p})^2}} 
\end{eqnarray*}
for $n > \frac{\chi(G,{\bf p})^2}{c({\bf p},d)^2}$. 
The proof for the strong connection case is same. 
\end{proof}

The hypercube lattice $\mathbb{L}^D$ is the directed graph $(V,E)$ with $V=\mathbb{Z}^D$ and $(x,y)\in E $ if and only if $\|x-y\|=1$, where $\|\cdot \|$ is the Euclidean norm. 
This is a $d$-regular graph with $d=2D$, and  $\mathbb{L}^1=T_2$.

\begin{theorem}
\label{thm:weaklog}
Suppose $p_2>0$ and $D\ge 2$. 
Then there exists the limit 
\begin{eqnarray*}
\zeta(\mathbb{L}^D,{\bf p}):= \lim_{n\to \infty} \left\{ \frac{-\log P_{\bf p}(|C|=n)}{n} \right\} \ge 0
\end{eqnarray*}
and if $\chi(\mathbb{L}^D,{\bf p}) <\infty$, then $\zeta(\mathbb{L}^D,{\bf p}) >0$. 
Moreover,  
\begin{eqnarray*}
P_{\bf p}(|C|=n) \le \frac{n}{a(d,{\bf p})} e^{-n\zeta(\mathbb{L}^D,{\bf p})}
\end{eqnarray*}
for any $n\in \mathbb{N}$, where 
\begin{eqnarray*}
a(d,{\bf p})=\left(\frac{2p_2}{d(d-1)}\right)^3\left( \sum_{i=0}^{d-2} p_i \frac{(d-i)(d-i-1)}{d(d-1)}\right)^{2d-4}\left( \sum_{i=0}^{d-1} p_i \frac{d-i}{d}\right)^{d-4}.
\end{eqnarray*} 
\end{theorem}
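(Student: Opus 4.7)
The natural route is Fekete's subadditive lemma applied to $f(n) := -\log P_{\bf p}(|C|=n)$. The whole problem reduces to establishing a super-multiplicative inequality of the form
\begin{equation*}
P_{\bf p}(|C|=n+m) \;\ge\; \frac{a(d,{\bf p})}{n+m}\, P_{\bf p}(|C|=n)\, P_{\bf p}(|C|=m),
\end{equation*}
from which, setting $Q(n):=a(d,{\bf p})\,P_{\bf p}(|C|=n)/n$, one checks that $-\log Q(n)$ is subadditive up to $o(n)$ terms. Fekete then gives the existence of $\zeta(\mathbb{L}^D,{\bf p})=\lim_n -\log P_{\bf p}(|C|=n)/n$, and unwinding the normalization yields the explicit upper bound $P_{\bf p}(|C|=n)\le (n/a(d,{\bf p}))\,e^{-n\zeta(\mathbb{L}^D,{\bf p})}$. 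The positivity of $\zeta$ under the hypothesis $\chi(\mathbb{L}^D,{\bf p})<\infty$ is immediate from Theorem \ref{thm:NumVertex}, which already gives $P_{\bf p}(|C|=n)\le P_{\bf p}(|C|\ge n)\le e^{-cn}$ for an explicit $c>0$.

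The core of the argument is a gluing construction, for which $D\ge 2$ is essential so that we have room to separate the two clusters. Given two independent configurations $\omega,\omega'$ with $|C(o,\omega)|=n$ and $|C(o,\omega')|=m$, pick the "top" vertex $x$ of $C(o,\omega)$ to be the one maximising the $e_1$-coordinate, with lexicographic tie-breaking in the remaining coordinates, and the "bottom" vertex $y$ of $C(o,\omega')$ analogously. Translate $\omega'$ so that $y$ lands at $x+2e_1$. By the extremal choice of $x$ and $y$, the two clusters then live in disjoint half-spaces separated by the hyperplane through $x+e_1$ perpendicular to $e_1$, so no accidental contact can occur on their exteriors. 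Now force the three pivot vertices $x$, $x+e_1$, $x+2e_1$ each to choose exactly the pair of $e_1$-neighbours completing a weak path between the two clusters, and seal off the remaining neighbours: the $2d-4$ vertices that must avoid selecting either endpoint of certain prescribed edge pairs, and the $d-4$ vertices that must avoid selecting a single prescribed edge.

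The probability cost of these local alterations is precisely $a(d,{\bf p})$: each pivot contributes a factor $2p_2/(d(d-1))$ (the chance of picking a prescribed pair of neighbours, conditioned on choosing exactly $2$), each of the $2d-4$ "full-seal" vertices contributes $\sum_{i=0}^{d-2}p_i(d-i)(d-i-1)/(d(d-1))$, and each of the $d-4$ "half-seal" vertices contributes $\sum_{i=0}^{d-1}p_i(d-i)/d$. Because these modifications are all at disjoint vertices and the altered event forces $|C|=n+m$ in the glued configuration, we obtain the desired super-multiplicative lower bound by independence together with the translation invariance of $P_{\bf p}$ on $\mathbb{L}^D$. The $1/(n+m)$ factor comes from the fact that the map $(\omega,\omega')\mapsto\tilde\omega$ is not injective but at most $(n+m)$-to-one (any vertex of the glued cluster could \emph{a priori} be the pivot $x$), so summing over pivots introduces a linear overcount.

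The main obstacle is verifying the bookkeeping in the gluing step: one must check that the lex-extremal choice of $x,y$ indeed enforces geometric separation, that the sealing at the $3+(2d-4)+(d-4)$ designated vertices produces a weak cluster of size \emph{exactly} $n+m$ (no leakage out of the glued region and no accidental merging with vertices outside the two original clusters), and that the map $(\omega,\omega')\mapsto\tilde\omega$ is at most $(n+m)$-to-one. Matching the precise exponents $3,\;2d-4,\;d-4$ in $a(d,{\bf p})$ requires a careful enumeration of the local neighbourhood of the three pivots in $\mathbb{L}^D$, using $d=2D$ and the fact that two of the pivots share one neighbour while the other $d-4$ are disjoint.
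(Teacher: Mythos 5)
Your proposal is essentially the paper's own argument (the Kunz--Souillard supermultiplicativity-plus-Fekete scheme, with the cluster glued at lexicographically extremal points, the same three classes of local modification whose probabilities multiply to $a(d,{\bf p})$, and positivity of $\zeta$ drawn from Theorem \ref{thm:NumVertex}). Two bookkeeping points where the paper differs from your sketch: the glued cluster has $m+n+3$ vertices (the paper places the two clusters so that $\tr(\sigma)=-2e_1$ and $\bl(\tau)=2e_1$ and inserts the three bridge vertices $-e_1,0,e_1$, yielding $\frac{\pi_{m+n+3}}{m+n+3}\ge a(d,{\bf p})\frac{\pi_m}{m}\frac{\pi_n}{n}$, with a correspondingly shifted Fekete lemma), and the polynomial factors come not from an overcounting of pivot locations but from the standard one-marked-vertex-per-cluster identity $P_{\bf p}(|C(v)|=m,\ \tr(C(v))=v)=\pi_m/m$ applied on both sides. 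Also, the exponent $d-4$ does not count ``$d-4$ vertices'': the middle bridge vertex has $d-2$ transverse neighbours to seal, and the exponent drops by $2$ because the modification at $\pm e_1$ replaces the factors $P_{\bf p}(\mp 2e_1\notin\omega(\mp e_1))$ already present in the events $\{C(\mp 2e_1)=\sigma\}$, $\{C(\pm 2e_1)=\tau\}$. None of this changes the validity of your approach.
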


\begin{proof}
The proof is based on \cite{MR0496290} (cf.\ \cite{MR1707339}). 
Let $\pi_n:=P_{\bf p}(|C|=n)$. 
\begin{lemma}\label{lem:Fekete3}
We have 
\begin{eqnarray*}
\frac{\pi_{m+n+3}}{m+n+3} \ge a(d,{\bf p}) \frac{\pi_{m}}{m}\frac{\pi_{n}}{n} 
\end{eqnarray*}
for all $n,m \ge 1$. 
\end{lemma}

\begin{proof}
For any subgraph $S$ of $\mathbb{L}^D$, 
set $$\tr(S):=(x^1,x^2,\dots,x^D) \in \mathbb{Z}^D$$ if $x^1 = \max_{ (y^1,y^2, \dots, y^D)\in S}y^1$, $x^2 = \max_{(x^1,y^2, \dots, y^D)\in S} y^2 $, $\dots$, $x^D = \max_{(x^1,x^2, \dots, x^{D-1}, y^D)\in S}y^D $, and  
$$\bl(S):=(x^1,x^2,\dots,x^D) \in \mathbb{Z}^D$$ if $x^1 = \min_{ (y^1,y^2, \dots, y^D)\in S}y^1$, $x^2 = \min_{(x^1,y^2, \dots, y^D)\in S} y^2 $, $\dots$, $x^D = \min_{(x^1,x^2, \dots, x^{D-1}, y^D)\in S}y^D $. 

Set $e_i:=(0, \dots, 0,1,0,\dots,0)$ to be the standard basis of $\mathbb{R}^D$ for $i=1,2,\dots, D$.  
Let $\sigma , \tau$ be connected subgraphs of $\mathbb{L}^D$ with $\tr(\sigma)=-2e_1$, $\bl(\tau)=2e_1$ and $|\sigma|=m$, $|\tau|=n$ for $m,n \in \mathbb{N}$.  
We define the subgraph $\sigma * \tau$ such that 
\begin{eqnarray*}
V(\sigma * \tau) &:=& V(\sigma) \cup V(\tau) \cup \{ -e_1, 0, e_1\} \\ 
E(\sigma * \tau) &:=& E(\sigma) \cup E(\tau ) \cup \{ (-e_1, -2e_1), (-e_1, 0), (0,-e_1), (0,e_1), (e_1,0), (e_1,2e_1)\} . 
\end{eqnarray*}
Then  $|V(\sigma * \tau)|=m+n+3$. 
Let
\begin{eqnarray*}
f(e_1) &:=& \prod_{i=2}^DP_{\bf p}(e_1, 2e_1+e_i \not \in \omega(e_1+e_i)) P_{\bf p}(e_1, 2e_1-e_i \not \in \omega(e_1-e_i)) \\ 
f(0) &:=& \prod_{i=2}^DP_{\bf p}(0 \not \in \omega(e_i)) P_{\bf p}(0 \not \in \omega(-e_i))\\
f(-e_1) &:=& \prod_{i=2}^DP_{\bf p}(-e_1, -2e_1+e_i \not \in \omega(-e_1+e_i)) P_{\bf p}(-e_1, -2e_1-e_i \not \in \omega(-e_1-e_i)). 
\end{eqnarray*}
These are the probabilities such that the vertices $\{ -e_1 \pm e_i, 0\pm e_i, e_1\pm e_i\}$ ($i=2,\dots ,D$) are not weakly connected to vertices in $\{0,e_1,-e_1\} \cup \{x\in \mathbb{Z}^D \mid x_i = 2 \text{ or } -2 \}$. 
Then we have 
\begin{eqnarray*}
&& \frac{P_{\bf p}(C(0)=\sigma*\tau)}{P_{\bf p}(\omega(-e_1)=\{-2e_1,0\})P_{\bf p}(\omega(0)=\{-e_1,e_1\})P_{\bf p}(\omega(e_1)=\{0,2e_1\}) f(e_1)f(0)f(-e_1)} \\ 
&\ge & \frac{P_{\bf p}(C(-2e_1)=\sigma) P_{\bf p}(C(2e_1)=\tau)}{P_{\bf p}(-2e_1 \not\in  \omega(-e_1)) P_{\bf p}(2e_1 \not\in \omega(e_1))}  . 
\end{eqnarray*}
We can calculate 
\begin{eqnarray*}
&& f(e_1) = f(-e_1) = \left( \sum_{i=0}^{d-2} p_i \frac{(d-i)(d-i-1)}{d(d-1)}\right)^{d-2} > 0, \ \ 
f(0) = \left( \sum_{i=0}^{d-1} p_i \frac{d-i}{d}\right)^{d-2} > 0,  \\
&& P_{\bf p}(\omega(-e_1)=\{-2e_1,0\}) = P_{\bf p}(\omega(0)=\{-e_1,e_1\}) = P_{\bf p}(\omega(e_1)=\{0,2e_1\}) = \frac{2p_2}{d(d-1)}\\ 
&& P_{\bf p}(-2e_1 \not\in  \omega(-e_1)) = P_{\bf p}(2e_1 \not\in \omega(e_1))=\sum_{i=0}^{d-1} p_i \frac{d-i}{d}  . 
\end{eqnarray*}
Hence 
\begin{eqnarray*}
P_{\bf p}(C(0)=\sigma*\tau) 
&\ge & 
a(d,{\bf p}) P_{\bf p}(C(-2e_1)=\sigma) P_{\bf p}(C(2e_1)=\tau). 
\end{eqnarray*}
Thus 
\begin{eqnarray*}
&& a(d,{\bf p}) \frac{\pi_{m}}{m}\frac{\pi_{n}}{n} \\
&=& a(d,{\bf p})  \left( \sum_{\sigma , |\sigma|=m, \tr(\sigma)=-2e_1} P_{\bf p}(C(-2e_1)=\sigma) \right) \left( \sum_{\tau, |\tau|=n, \bl(\tau)=2e_1} P_{\bf p}(C(2e_1)=\tau) \right) \\ 	
&=& \sum_{\sigma , |\sigma|=m, \tr(\sigma)=-2e_1}  \sum_{\tau, |\tau|=n, \bl(\tau)=2e_1} a(d,{\bf p})  P_{\bf p}(C(-2e_1)=\sigma) P_{\bf p}(C(2e_1)=\tau) \\ 
&\le & \sum_{\sigma , |\sigma|=m, \tr(\sigma)=-2e_1}  \sum_{\tau, |\tau|=n, \bl(\tau)=2e_1} P_{\bf p}(C(0)=\sigma*\tau)  \\ 
&\le& \sum_{\rho , |\rho|=m+n+3, \bl(\rho)=0}  P_{\bf p}(C=\rho)  \\ 
&=& \frac{\pi_{m+n+3}}{m+n+3}. 
\end{eqnarray*}
\end{proof}
From this lemma, we get 
\begin{eqnarray*}
a(d,{\bf p}) \frac{\pi_{m+n+3}}{m+n+3} \ge \left( a(d,{\bf p}) \frac{\pi_{m}}{m}\right) \left(a(d,{\bf p}) \frac{\pi_{n}}{n} \right).  
\end{eqnarray*}
Thus $b_n:= - \log a(d,{\bf p}) - \log \pi_n + \log n > 0$ satisfies  
$b_{m+n+3} \le b_m+b_n $
for $n,m\ge 2$. 
As Fekete's subadditive lemma, we can prove 
\begin{lemma}
\begin{eqnarray*}
\lim_{n\to \infty}\frac{b_n}{n} = \inf_{n \in \mathbb{N}}\frac{b_n}{n}  . 
\end{eqnarray*}
\end{lemma}

From this lemma, there is
\begin{eqnarray*}
\lim_{n\to \infty}\frac{b_n}{n} 
= \lim_{n\to \infty}\frac{- \log a(d,{\bf p}) - \log \pi_n + \log n}{n}
= \lim_{n\to \infty}\frac{- \log \pi_n}{n} 
= \zeta(\mathbb{L}^D,{\bf p}). 
\end{eqnarray*}
Moreover since
$\zeta(\mathbb{L}^D,{\bf p}) = \inf_{n \in \mathbb{N}}\frac{b_n}{n} = \inf_{n \in \mathbb{N}} \frac{-1}{n} \log \left( a(d,{\bf p})\frac{\pi_n}{n} \right), $
we have 
\begin{eqnarray*}
P_{\bf p}(|C|=n)=\pi_n\le \frac{n}{a(d,{\bf p})}e^{-n \zeta(\mathbb{L}^D,{\bf p})}
\end{eqnarray*}
for any $n \in \mathbb{N}$. 

If $\chi(\mathbb{L}^D,{\bf p}) <\infty$, then by the Theorem \ref{thm:NumVertex}, we obtain 
\begin{eqnarray*}
\zeta(\mathbb{L}^D,{\bf p}) 
= \lim_{n\to \infty}\frac{- \log P_{\bf p}(|C|=n)}{n} 
\ge \lim_{n\to \infty}\frac{- \log P_{\bf p}(|C|\ge n )}{n} 
\ge  \frac{c({\bf p},d)}{2\chi(\mathbb{L}^D,{\bf p})^2}  
> 0. 
\end{eqnarray*}
\end{proof}

\begin{theorem}
\label{thm:stronglog}
Suppose $D\ge 2$, and if $p_i>0$ and $i\ge 1$ then $p_j>0$  for all $j \ge i$. 
Then there exists the limit 
\begin{eqnarray*}
\tilde{\zeta}(\mathbb{L}^D,{\bf p}):= \lim_{n\to \infty} \left\{ \frac{-\log P_{\bf p}(|\tilde{C}|=n)}{n} \right\} \ge 0
\end{eqnarray*}
and if $\tilde{\chi}(\mathbb{L}^D,{\bf p}) <\infty$, then $\tilde{\zeta}(\mathbb{L}^D,{\bf p}) >0$. 
Moreover,  
\begin{eqnarray*}
P_{\bf p}(|\tilde{C}|=n) \le \frac{n}{\tilde{a}(d,{\bf p})} e^{-n\tilde{\zeta}(\mathbb{L}^D,{\bf p})}
\end{eqnarray*}
for any $n\in \mathbb{N}$, where 
\begin{eqnarray*}
\tilde{a}(d,{\bf p}) := 
\min \left\{ 1, \frac{p_{k+1}}{p_k} \frac{k+1}{d-k}, \frac{p_{k+2}}{p_{k+1}} \frac{k+2}{d-k-1}, \dots, \frac{p_d}{p_{d-1}} d\right\}^2
\end{eqnarray*}
for $k:=\min\{j>0 \mid p_j>0\}$. 
\end{theorem}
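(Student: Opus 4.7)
The plan is to follow the same strategy as for Theorem~\ref{thm:weaklog}, replacing weak clusters by strong ones throughout. Write $\tilde{\pi}_n := P_{\bf p}(|\tilde{C}|=n)$. The heart of the argument is an analogue of Lemma~\ref{lem:Fekete3}: for all $m,n\ge 1$,
\[
\frac{\tilde{\pi}_{m+n+3}}{m+n+3} \;\ge\; \tilde{a}(d,{\bf p})\,\frac{\tilde{\pi}_m}{m}\,\frac{\tilde{\pi}_n}{n}.
\]
Given this, the remainder of the proof is formal and identical to the weak case. Setting $\tilde{b}_n := -\log \tilde{a}(d,{\bf p}) - \log \tilde{\pi}_n + \log n$, the inequality reads $\tilde{b}_{m+n+3}\le\tilde{b}_m+\tilde{b}_n$; the Fekete-type lemma already proved in Theorem~\ref{thm:weaklog} then yields $\lim_{n\to\infty}\tilde{b}_n/n=\inf_n\tilde{b}_n/n=:\tilde{\zeta}(\mathbb{L}^D,{\bf p})$, and the infimum characterisation gives $P_{\bf p}(|\tilde{C}|=n)\le (n/\tilde{a}(d,{\bf p}))e^{-n\tilde{\zeta}}$. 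Positivity of $\tilde{\zeta}$ under $\tilde{\chi}(\mathbb{L}^D,{\bf p})<\infty$ is immediate from Theorem~\ref{thm:NumVertex}(ii), which gives $\tilde{\zeta}\ge \tilde{c}({\bf p},d)^2/(2\tilde{\chi}(\mathbb{L}^D,{\bf p})^2)>0$.

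For the gluing inequality, I would take connected subgraphs $\sigma,\tau$ of $\mathbb{L}^D$ with $\tr(\sigma)=-2e_1$, $\bl(\tau)=2e_1$, $|\sigma|=m$, $|\tau|=n$, and define $\sigma*\tau$ exactly as in Lemma~\ref{lem:Fekete3}, so that $|V(\sigma*\tau)|=m+n+3$. Starting from any configuration $\omega$ with $\tilde{C}(-2e_1)=\sigma$ and $\tilde{C}(2e_1)=\tau$ (these two events involve disjoint collections of vertices and are therefore independent under $P_{\bf p}$), I construct $\omega'$ for which $\tilde{C}(0)=\sigma*\tau$ by three kinds of modification: (a) enlarge $\omega(-2e_1)$ to contain $-e_1$ and $\omega(2e_1)$ to contain $e_1$ whenever they do not already; (b) prescribe $\omega'(-e_1),\omega'(0),\omega'(e_1)$ to specific states realising the antiparallel bridge edges required for the strong chain $-2e_1\leftrightarrow -e_1\leftrightarrow 0\leftrightarrow e_1\leftrightarrow 2e_1$; (c) adjust the states of the vertices adjacent to $\{-e_1,0,e_1\}$ so that no new strong connection escapes from $\sigma*\tau$ to any exterior site.

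The factor $\tilde{a}(d,{\bf p})$ arises entirely from step~(a). If $\omega(-2e_1)$ has cardinality $j$, then replacing a specific set $S$ of size $j$ by $S\cup\{-e_1\}$ of size $j+1$ carries probability ratio
\[
\frac{p_{j+1}/\binom{d}{j+1}}{p_j/\binom{d}{j}} \;=\; \frac{p_{j+1}}{p_j}\cdot\frac{j+1}{d-j},
\]
while no change is needed (cost $1$) if $-e_1\in \omega(-2e_1)$ already. The minimum of $1$ together with these ratios over $j\in\{k,\dots,d-1\}$ is well-defined precisely because the hypothesis guarantees $p_j>0$ for every $j\ge k$, and it delivers a worst-case cost $\sqrt{\tilde{a}(d,{\bf p})}$ at each of the two endpoints $-2e_1$ and $2e_1$, producing the full factor $\tilde{a}(d,{\bf p})$. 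Summing the resulting bound $P_{\bf p}(\tilde{C}(0)=\sigma*\tau)\ge \tilde{a}(d,{\bf p})\,P_{\bf p}(\tilde{C}(-2e_1)=\sigma)P_{\bf p}(\tilde{C}(2e_1)=\tau)$ over admissible pairs and using that the assignment $(\sigma,\tau)\mapsto\sigma*\tau$ is injective recovers the inequality for $\tilde{\pi}$'s, with the factors $1/m$, $1/n$, $1/(m+n+3)$ emerging as in the weak case from the identification of $\tilde{\pi}_n/n$ with the sum over subgraphs $\rho$ satisfying $\bl(\rho)=0$.

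The main obstacle I expect is step~(c): verifying that the modifications in~(a) and~(b) do not inadvertently create a strong connection from $\sigma*\tau$ to an exterior vertex, which would spoil $|\tilde{C}(0)|=m+n+3$. This requires careful simultaneous control over the states of the neighbours of $\{-e_1,0,e_1\}$, and to a lesser extent those of $-2e_1$ and $2e_1$ after enlargement. The positivity hypothesis $p_j>0$ for all $j\ge k$ is what permits every necessary local prescription to be made at strictly positive probability cost; absorbing these positive constants into the definition of $\tilde{a}(d,{\bf p})$ (after possibly weakening the constant as in the weak case) is essentially the bookkeeping needed to extract the explicit form of $\tilde{a}(d,{\bf p})$ stated in the theorem.
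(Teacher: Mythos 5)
Your overall scaffolding (a supermultiplicativity inequality for $\tilde{\pi}_n/n$, the Fekete-type lemma, and Theorem \ref{thm:NumVertex}(ii) for positivity) matches the paper, but the gluing construction you propose is not the one the paper uses, and as written it does not prove the stated inequality. The paper does \emph{not} reuse the three-vertex bridge of Lemma \ref{lem:Fekete3}: it takes $\tr(\sigma)=0$, $\bl(\tau)=e_1$ (the two clusters directly adjacent), sets $V(\sigma*\tau)=V(\sigma)\cup V(\tau)$, and creates the single new strong edge $0\leftrightarrow e_1$ by enlarging $\omega(0)$ to $\sigma(0)\cup\{e_1\}$ and $\omega(e_1)$ to $\tau(e_1)\cup\{0\}$. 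This yields
\begin{eqnarray*}
\frac{\tilde{\pi}_{m+n}}{m+n} \ \ge\ \tilde{a}(d,{\bf p})\,\frac{\tilde{\pi}_m}{m}\,\frac{\tilde{\pi}_n}{n},
\end{eqnarray*}
with $\tilde{a}(d,{\bf p})$ exactly the square of the worst single-element enlargement ratio $\min\{1,\frac{p_{j+1}}{p_j}\frac{j+1}{d-j}\}$ --- nothing else is paid for. The decisive point, which is the idea missing from your write-up, is that for \emph{strong} connectivity adding the two arrows $(0,e_1)$ and $(e_1,0)$ creates exactly one new strong edge and cannot create any leak to the exterior, because every other strong adjacency requires an arrow that has not been changed. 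Hence your step (c), which you correctly identify as the main obstacle, simply does not arise in the paper's construction.

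Your version has two concrete problems. First, the bridge vertices $-e_1,0,e_1$ and the insulation of their neighbours cost additional strictly positive factors, so you cannot obtain the specific constant $\tilde{a}(d,{\bf p})$ appearing in the statement and in the final bound $P_{\bf p}(|\tilde{C}|=n)\le \frac{n}{\tilde{a}(d,{\bf p})}e^{-n\tilde{\zeta}}$; you would only get the theorem with a weaker, unspecified constant. Second, and more seriously, the hypothesis here does \emph{not} include $p_2>0$ (unlike Theorem \ref{thm:weaklog}); if $k=\min\{j>0\mid p_j>0\}\ge 3$, the prescription $\omega'(0)=\{-e_1,e_1\}$ has probability zero, so each bridge vertex must point to $k-2$ additional off-bridge neighbours, and you must then control the states of those neighbours (which are not determined by the events $\tilde{C}(-2e_1)=\sigma$, $\tilde{C}(2e_1)=\tau$) to prevent them from pointing back and enlarging the strong cluster. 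You acknowledge this difficulty but do not resolve it, and resolving it is precisely what the paper's direct-adjacency gluing makes unnecessary. As it stands, the core inequality is not established.
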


\begin{proof}
The proof is also based on \cite{MR0496290}. 
Let $\pi_n:=P_{\bf p}(|\tilde{C}|=n)$. 
If we prove the following inequality, then we obtain the conclusion by the same proof of Theorem \ref{thm:weaklog}. 
\begin{eqnarray*}
\frac{\pi_{m+n}}{m+n} \ge \tilde{a}(d,{\bf p}) \frac{\pi_{m}}{m}\frac{\pi_{n}}{n} 
\end{eqnarray*}
for all $n,m \ge 1$. 
As the proof of Lemma \ref{lem:Fekete3}, we define $\tr(S)$ and $\bl(S)$ for a subgraph $S$ of $\mathbb{L}^D$, and standard basis $e_i\in \mathbb{R}^D$.  
Let $\sigma , \tau$ be connected subgraphs of $\mathbb{L}^D$ with $\tr(\sigma)=0$, $\bl(\tau)=e_1$ and $|\sigma|=m$, $|\tau|=n$ for $m,n \in \mathbb{N}$.  
We define the subgraph $\sigma * \tau$ such that 
\begin{eqnarray*}
V(\sigma * \tau) &:=& V(\sigma) \cup V(\tau) \\ 
E(\sigma * \tau) &:=& E(\sigma) \cup E(\tau ) \cup \{ (0, e_1), (e_1, 0) \} . 
\end{eqnarray*}
Then  $|V(\sigma * \tau)|=m+n$. 
Let $\sigma (0):=\{v \in L_0 \mid (0,v) \in E(\sigma)\}$, $\sigma' (0) := \sigma (0) \cup \{e_1\} $, and $\tau(e_1):=\{v \in L_{e_1} \mid (e_1,v) \in E(\tau)\}$, $\tau'(e_1):= \tau(e_1) \cup \{0\}$. 
Then we have 
\begin{eqnarray*}
&& P_{\bf p}(\tilde{C}(0)=\sigma * \tau )  \\ 
&=& P_{\bf p}(\tilde{C}(0)=\sigma) \frac{P_{\bf p}(\omega(0)=\sigma'(0))}{P_{\bf p}(\omega(0)=\sigma(0))}
P_{\bf p}(\tilde{C}(e_1)=\tau ) \frac{P_{\bf p}(\omega(e_1)=\tau'(e_1))}{P_{\bf p}(\omega(e_1)=\tau(e_1))} \\
&\ge & P_{\bf p}(\tilde{C}(0)=\sigma) P_{\bf p}(\tilde{C}(e_1)=\tau )
\min \left\{ 1, \frac{p_{k+1}}{p_k} \frac{k+1}{d-k}, \frac{p_{k+2}}{p_{k+1}} \frac{k+2}{d-k-1}, \dots, \frac{p_d}{p_{d-1}} d\right\}^2\\ 
&=& \tilde{a}(d, {\bf p})P_{\bf p}(\tilde{C}(0)=\sigma) P_{\bf p}(\tilde{C}(e_1)=\tau )
\end{eqnarray*}
for $k=\min\{j>0 \mid p_j>0\}$.
Thus 
\begin{eqnarray*}
&& \tilde{a}(d,{\bf p}) \frac{\pi_{m}}{m}\frac{\pi_{n}}{n} \\
&=& \tilde{a}(d,{\bf p})  \left( \sum_{\sigma , |\sigma|=m, \tr(\sigma)=0} P_{\bf p}(\tilde{C}(0)=\sigma) \right) 
\left( \sum_{\tau, |\tau|=n, \bl(\tau)=e_1} P_{\bf p}(\tilde{C}(e_1)=\tau) \right) \\ 
&=& \sum_{\sigma , |\sigma|=m, \tr(\sigma)=0}  \sum_{\tau, |\tau|=n, \bl(\tau)=e_1} \tilde{a}(d,{\bf p})  P_{\bf p}(\tilde{C}(0)=\sigma) P_{\bf p}(\tilde{C}(e_1)=\tau) \\ 
&\le & \sum_{\sigma , |\sigma|=m, \tr(\sigma)=0}  \sum_{\tau, |\tau|=n, \bl(\tau)=e_1} P_{\bf p}(\tilde{C}(0)=\sigma*\tau)  \\ 
&\le& \sum_{\rho , |\rho|=m+n, \bl(\rho)=0}  P_{\bf p}(\tilde{C}=\rho)  \\ 
&=& \frac{\pi_{m+n}}{m+n}. 
\end{eqnarray*}
\end{proof}

This proof is also valid for weak cluster.

\begin{bibdiv}
\begin{biblist}
\bib{MR762034}{article}{
   author={Aizenman, Michael},
   author={Newman, Charles M.},
   title={Tree graph inequalities and critical behavior in percolation
   models},
   journal={J. Statist. Phys.},
   volume={36},
   date={1984},
   number={1-2},
   pages={107--143},
   issn={0022-4715},
   review={\MR{762034}},
   doi={10.1007/BF01015729},
}
\bib{MR0091567}{article}{
   author={Broadbent, S. R.},
   author={Hammersley, J. M.},
   title={Percolation processes. I. Crystals and mazes},
   journal={Proc. Cambridge Philos. Soc.},
   volume={53},
   date={1957},
   pages={629--641},
   review={\MR{0091567}},
}
\bib{MR990777}{article}{
   author={Burton, R. M.},
   author={Keane, M.},
   title={Density and uniqueness in percolation},
   journal={Comm. Math. Phys.},
   volume={121},
   date={1989},
   number={3},
   pages={501--505},
   issn={0010-3616},
   review={\MR{990777}},
}
\bib{MR2912714}{article}{
   author={Duminil-Copin, Hugo},
   author={Smirnov, Stanislav},
   title={The connective constant of the honeycomb lattice equals $\sqrt{2+\sqrt{2}}$},
   journal={Ann. of Math. (2)},
   volume={175},
   date={2012},
   number={3},
   pages={1653--1665},
   issn={0003-486X},
   review={\MR{2912714}},
   doi={10.4007/annals.2012.175.3.14},
}
\bib{MR1707339}{book}{
   author={Grimmett, Geoffrey},
   title={Percolation},
   series={Grundlehren der Mathematischen Wissenschaften [Fundamental Principles of Mathematical Sciences]},
   volume={321},
   edition={2},
   publisher={Springer-Verlag, Berlin},
   date={1999},
   pages={xiv+444},
   isbn={3-540-64902-6},
   review={\MR{1707339 (2001a:60114)}},
   doi={10.1007/978-3-662-03981-6},
}
\bib{MR1824275}{article}{
   author={Grimmett, Geoffrey R.},
   title={Infinite paths in randomly oriented lattices},
   journal={Random Structures Algorithms},
   volume={18},
   date={2001},
   number={3},
   pages={257--266},
   issn={1042-9832},
   review={\MR{1824275}},
   doi={10.1002/rsa.1007},
}
\bib{MR0163361}{book}{
   author={Harris, Theodore E.},
   title={The theory of branching processes},
   series={Die Grundlehren der Mathematischen Wissenschaften, Bd. 119},
   publisher={Springer-Verlag, Berlin; Prentice-Hall, Inc., Englewood
   Cliffs, N.J.},
   date={1963},
   pages={xiv+230},
   review={\MR{0163361}},
}
\bib{MR0496290}{article}{
   author={Kunz, Herv{\'e}},
   author={Souillard, Bernard},
   title={Essential singularity in percolation problems and asymptotic
   behavior of cluster size distribution},
   journal={J. Statist. Phys.},
   volume={19},
   date={1978},
   number={1},
   pages={77--106},
   issn={0022-4715},
   review={\MR{0496290}},
}
\bib{MR1197356}{book}{
   author={Madras, Neal},
   author={Slade, Gordon},
   title={The self-avoiding walk},
   series={Probability and its Applications},
   publisher={Birkh\"auser Boston, Inc., Boston, MA},
   date={1993},
   pages={xiv+425},
   isbn={0-8176-3589-0},
   review={\MR{1197356 (94f:82002)}},
}
\bib{MR1751301}{article}{
   author={Reimer, David},
   title={Proof of the van den Berg-Kesten conjecture},
   journal={Combin. Probab. Comput.},
   volume={9},
   date={2000},
   number={1},
   pages={27--32},
   issn={0963-5483},
   review={\MR{1751301 (2001g:60017)}},
   doi={10.1017/S0963548399004113},
}
\bib{MR1840826}{article}{
   author={Wu, Xianyuan},
   title={On the random-oriented percolation},
   journal={Acta Math. Sci. Ser. B Engl. Ed.},
   volume={21},
   date={2001},
   number={2},
   pages={265--274},
   issn={0252-9602},
   review={\MR{1840826}},
}
\end{biblist}
\end{bibdiv}

\vspace{5mm}
\noindent 
Mamoru Tanaka,\\ 
Advanced Institute for Materials Research, Tohoku University, Sendai, 980-8577 Japan\\ 
E-mail: mamoru.tanaka@wpi-aimr.tohoku.ac.jp

\end{document}